\newtheorem{thm}{Theorem}[section]
\newtheorem{prop}[thm]{Proposition}
\newtheorem{lem}[thm]{Lemma}
\title{On the Holomorphy of Exterior-Square $L$-functions\\Preprint}
\author{Dustin Belt}
\begin{document}
\maketitle

\begin{abstract}

In this paper, we show that the twisted partial exterior-square $L$-function has a meromorphic continuation to the whole complex plane with only two possible simple poles at $s=1$ and $s=0$. We do this by establishing the nonvanishing of the local zeta integrals defined by Jacquet and Shalika for any fixed $s_0$. The even case is treated in detail. The odd case is treated briefly, in which case, the $L$-function is shown to be entire.
\end{abstract}

\maketitle

\section{Introduction}

Let $\pi$ be an irreducible automorphic cuspidal representation of $GL_r(\mathbb{A}_F)$ where $F$ is a number field. Then the connected component of the corresponding $L$-group, ${}^L{GL}_r^{\circ}$ is simply the group $GL_r(\mathbb{C})$. Let $\rho$ be the standard representation of degree $r$ of ${}^L{GL}_r^{\circ}$. We have the standard $L$-function attached to $\pi$, $L(s,\pi,\rho)$, and the so-called convolution of this $L$-function with itself $L(s,\pi,\rho\otimes\rho)$, which we will also denote as $L(s,\pi\times\pi)$. 

The representation $\rho\otimes\rho$ decomposes into the direct sum $$\rho\otimes\rho=\mathrm{Sym}^2\oplus {\bigwedge}^2$$ where $\mathrm{Sym}^2$ is the symmetric square representation of $GL_r(\mathbb{C})$ on the space of symmetric tensors and ${\bigwedge}^2$ is the exterior square representation of $GL_r(\mathbb{C})$ on the space of antisymmetric tensors. Then the $L$-function $L(s,\pi,\rho\otimes\rho)$ decomposes as the product $$L(s,\pi,\rho\otimes\rho)=L(s,\pi,\mathrm{Sym}^2)L(s,\pi,{\bigwedge}^2).$$ 
It is known (see  \cite{prod1} or Theorem 10.1.1 of \cite{shahidinewbook}) that the $L$-function on the left has a simple pole at $s=1$ if and only if $\pi$ is self-dual (i.e. $\tilde{\pi}=\pi$ where $\tilde{\pi}$ is the contragredient of $\pi$). In this case, one of the two functions on the right would also have a pole at $s=1$. They cannot both have a pole at $s=1$ as this would imply a double pole on the left, which is not the case. It is known that neither function on the right vanishes for $\Re(s)=1$ \cite{shahidi1981}. Thus, $L(s,\pi,\rho\otimes\rho)$ has a simple pole at $s=1$ if and only if one (but not both) of $L(s,\pi,\mathrm{Sym}^2)$ or $L(s,\pi,{\bigwedge}^2)$ has a pole at $s=1$. Which of the two has the pole is closely related to the lifting theory of automorphic representations predicted by the Langlands functoriality conjecture. For example, it is known that $L(s,\pi,{\bigwedge}^2)$ has this pole if and only if $\pi$ is the functorial lift of a generic cuspidal representation of $SO_{2n+1}(F)$ \cite{arthur}.

In 1990, Herv{\'e} Jacquet and Joseph Shalika \cite{jasha} showed that the exterior square $L$-function $L(s,\pi,{\bigwedge}^2\otimes \chi)$, twisted by a idele class character $\chi$, has a pole at $s=1$ if and only if  $r$ is even and certain period integrals are non-zero. In that paper, the main technique is in giving a certain integral representation for $L(s,\pi,{\bigwedge}^2\otimes\chi)$, or rather the partial version of it, which can be decomposed as a product of local integrals. These local integrals are shown to converge for $\Re(s)\geq 1-\eta$ for a small positive number $\eta$ depending on the representation $\pi$, and the local data can be chosen so the local integral is non-zero at $s=1$ (See Propositions 1 and 3 of Section 7 of \cite{jasha}). Having this, the poles of the $L$-function can be related to the poles of a certain Eisenstein series. 

However, the method used in \cite{jasha} to show that the local integrals are nonvanishing for $s=1$ is very dependent on the absolute convergence of the integrals involved, which can only be assumed when $\Re(s)\geq 1$. Thus, while it is not difficult to generalize the proofs so that the desired results hold for any $s$ in the half plane $\Re(s)\geq 1$, without an improvement on the region of convergence for the local integrals, one cannot generalize the result to other $s$ without first developing meromorphic continuations for each of the various integrals involved.

Here, we expand on the results of \cite{jasha}, establishing that, in the case $r$ is even, $s=1$ and $s=0$ are the only possible locations for poles of the twisted partial $L$-function $L^S(s,\pi,{\bigwedge}^2\otimes \chi)$ for some  idele class character $\chi$ of absolute value 1. See Theorem \ref{global}. Our general strategy is the same as that of \cite{jasha}.

In Section 2, we show that the local Jacquet-Shalika $J(s,\chi,\Phi,W)$ integral can be written as 
\begin{equation}\int_{P_n\backslash GL_n} J_1(s,h,\chi,W)\Phi(\epsilon_n h)dh\label{summary} \end{equation}
where $J_1$ is a related integral, and $P_n$ is the mirabolic subgroup of $GL_n$. We refer to the body of the text for the precise definitions. This is essentialy the first step of the descending induction argument used in \cite{jasha}.
In Sections 3 and 4, rather than make further reductions, we use a result from \cite{jacquetnew} which allows us to choose $W$ so that, on the domain of integration in $J_1$, $W$ behaves like a smooth function of compact support, so that $J_1$ is absolutely convergent for all $s$, and thus entire. Furthermore, for fixed $s_0$, $W$ can be further chosen so that $J_1(s_0,1_n,\chi, W)\neq 0$ (see Lemma \ref{J1}). In the nonarchimedean case, since $J_1$ is locally constant as a function of $h$, this is sufficient to show that $\Phi$ can be chosen so that the integral (\ref{summary}) is entire and non-zero at $s=s_0$.

However, in the archimedean case, a question arises as to the nature of $J_1(s,h,\chi,W)$ for $h\neq 1_n$. If $W$ is chosen so that $J_{1}(s,1_{n},\chi,W)$ is entire, we cannot say that the integral $J_{1}(s,h,\chi,W)$ with $h\neq 1_{n}$ is absolutely convergent for all $s$. Thus, the expression (\ref{summary}) only makes sense for all $s$ if we have a meromorphic continuation of $J_1$ to the whole complex plane. We establish this meromorphic continuation, and do so in such a way so as to show that the dependence on $h$ is continuous in a sense which is made precise in Section 3. Thus, (\ref{summary}) defines a meromorphic continuation of $J(s,\chi,\Phi,W)$ to the whole complex plane and $\Phi$ can be chosen so that (\ref{summary}) is meromorphic with respect to $s$ and non-zero when $s=s_0$. See Theorem \ref{main}.
In section 5, this non-vanishing is then used to establish the desired holomorphy of the partial $L$-function (Theorem \ref{global}).

This paper was motivated by the work of Asgari and Shahidi \cite{twistednormalized} and was originally considered in connection with their current research concerning the functorial transfer of generic, automorphic representations from the quasi-split general spin groups to general linear groups \cite[Proposition 5.2]{asgarishahidi}. This paper is also related to the recent work of Pramod Kewat and Ravi Raghunathan \cite{pramod}.

We hope to follow up this paper by proving a similar holomorphy result concerning full (or completed) twisted exterior square $L$-functions. It appears that, with the present state of the local theory within the approach of \cite{jasha}, this will be best done using ideas more along the lines of Arthur \cite{arthur}, Moeglin and Waldspurger \cite{mogwald}, Shahidi \cite{shahidinewbook} and Cogdell, Kim, Piatetski-Shapiro and Shahidi \cite{normalized}, at least for the case $\chi=1$. For other $\chi$, we hope to build on the work of Asgari and Shahidi \cite{twistednormalized} and Hundley and Sayag \cite{hundleysayag}, and possible generalizations of \cite{arthur}. When this is completed, all of the desired analytic properties predicted by Langland's Conjecture in this particular case will have been established. 

In Section 2, we set up the local problem and present some of the initial manipulations of the integrals.
In Section 3, we treat the archimedean case, developing a meromorphic continuation for the Jacquet-Shalika integrals and establishing the desired nonvanishing result for this meromorphic continuation. We do this by modifying the results in \cite[Section 4]{jasha} on gauge representations of Whittaker functions and considering the generalized Mellin Transforms of Schwartz-Bruhat functions.
In Section 4, we treat the nonarchimedean case. Rather than generalize the work of Jacquet and Shalika \cite{jasha}, we modify the technique developed in Section 3, and establish the desired nonvanishing without appealing to a meromorphic continuation, showing, rather, that the Jacquet-Shalika integrals can be taken to be holomorphic functions in $s$. The work here is simpler in many ways, and does not make use of stronger results such as are used in Section 3.
In Section 5, we use the results of the previous sections to prove the desired global result, following the strategy used in section 8 of Jacquet and Shalika \cite{jasha}.

Throughout the paper, we treat the case where $r$ is even. However, in Chapter 6, we briefly treat the odd case, where the $L$-function is shown to be entire, using simpler versions of the techniques used in the even case. In \cite{kimcan}, the completed $L$-function for this case is shown to be entire.

The author would like to thank Freydoon Shahidi for his insight and advice. Also, Stephen D. Miller for his comments and for pointing out the 2010 paper by Jacquet, and Jiu-Kang Yu and David Goldberg for their helpful suggestions and comments. Also, thank you to James Cogdell for his careful reading of previous versions of this paper and his many useful comments.

\section{Local Considerations}

\subsection{Statement of the Main Local Result}

Let $F$ be a local field, $\psi$ a non-trivial additive character of $F$, $r=2n$ an even integer, $\pi$ a unitary irreducible representation of $GL_r(F)$, and $\chi$ a unitary character.

For a linear algebraic group $G$, we will often abuse the notation by writing $G$ for $G(F)$ whenever the context is clear, such as in the domain of integration of an integral.

Let $N_r$ be the unipotent subgroup of $GL_r$ consisting of upper triangular matrices with ones along the diagonal and let $\theta_r$ denote the character of $N_r$ defined by 
$$\theta_r\left[\left(\begin{array}{ccccc}1&u_{1,2}&\cdots&*&*\\0&1&\ddots&\vdots &\vdots\\ \vdots &\vdots& \ddots &1&u_{r-1,r}\\0&0&\cdots&0&1\end{array}\right)\right]=\psi\left(\sum_{i=1}^{r-1}u_{i,i+1}\right).$$

We denote by $\mathcal{W}(\pi,\psi)$ the Whittaker model of $\pi$ associated to this character. Let $W\in\mathcal{W}(\pi,\psi)$ and let $\Phi$ be a Schwartz-Bruhat function in $n$ variables.

Let $\sigma$ be the $2n\times 2n$ permutation matrix corresponding to the permutation changing the sequence $$(1,2,3,\ldots,n,n+1,n+2,\ldots,2n)$$ into the sequence $$(1,3,5,\ldots,2n-1,2,4,6,\ldots, 2n).$$ 

Let $M_n$ be the space of $n\times n$ matrices, $\mathfrak{p}_{0,n}$ the space of upper triangular matrices in $M_n$.

Consider the integral $J=J(s,\chi,W,\Phi)$ defined by \begin{equation}J=\int W\left[\sigma\left(\begin{array}{cc}1_n&Z\\0&1_n\end{array}\right)\left(\begin{array}{cc} g &0\\0&g\end{array}\right)\right]\psi(-\textrm{Tr}(Z))dZ\,\Phi(\epsilon_n g)\chi(\det g)|\det g|^sdg.\label{Jdef}\end{equation}

Here, $Z$ is integrated over the quotient $\mathfrak{p}_{0,n}(F)\backslash M_n(F),$ $g$ over the quotient $N_n(F)\backslash GL_n(F),$ and we have set $$\epsilon_n=(\underbrace{0,0,\ldots,0}_{n-1},1).$$
Note that, for $g\in GL_n(F)$, $\epsilon_n g$ gives the bottom row of $g$ as a row vector.

In the following, we will imbed $GL_{m-1}$ into $GL_m$ in the usual way as the upper left block.

Proposition 1 in section 7 of \cite{jasha} states the following:

\begin{prop}\label{absconv}Given $\pi$, there is an $\eta>0$ such that the integral $J$ converges absolutely for $\Re(s)>1-\eta$.\end{prop}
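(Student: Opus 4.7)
The plan is to bound $|J|$ by a finite product of convergent one-dimensional integrals on $F^{\times}$. The three ingredients are the Iwasawa decomposition on $GL_n$, a change of variables in $Z$ adapted to the diagonal part of $g$, and the standard gauge estimate for smooth vectors in $\mathcal{W}(\pi,\psi)$.

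First I would apply Iwasawa to write $g=ak$ with $a=\mathrm{diag}(a_1,\ldots,a_n)$ and $k$ in a fixed maximal compact $K\subset GL_n(F)$; on the quotient $N_n\backslash GL_n$ the Haar measure becomes $\delta_B^{-1}(a)\,d^{\times}a\,dk$, where $\delta_B(a)=\prod_i|a_i|^{n+1-2i}$. Since $K$ is compact and $\Phi$ is Schwartz-Bruhat, $\Phi(\epsilon_n ak)$ is bounded uniformly in $k$ and yields rapid decay in $|a_n|$.

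Next I would rewrite
\begin{equation*}
\sigma\begin{pmatrix}1&Z\\0&1\end{pmatrix}\begin{pmatrix}g&0\\0&g\end{pmatrix}=\sigma\begin{pmatrix}g&0\\0&g\end{pmatrix}\begin{pmatrix}1&g^{-1}Zg\\0&1\end{pmatrix},
\end{equation*}
and substitute $Z\mapsto gZg^{-1}$ in the $Z$-integral. Using strictly lower-triangular representatives for $\mathfrak{p}_{0,n}\backslash M_n$, this substitution rescales each free entry $Z_{ij}$ (with $i>j$) by $a_i/a_j$ (with a bounded $k$-conjugation), contributing a Jacobian $\prod_{i>j}|a_i/a_j|=\delta_B^{-1}(a)$. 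Furthermore, $\sigma\,\mathrm{diag}(a,a)\,\sigma^{-1}$ is the diagonal $(a_1,a_1,a_2,a_2,\ldots,a_n,a_n)$ in $GL_{2n}$, so we are reduced to bounding $|W|$ at this interleaved diagonal times a bounded piece and a controlled upper-unipotent element.

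Then I would invoke the gauge estimate: for $\pi$ unitary there exist Schwartz-Bruhat functions $\phi_1,\ldots,\phi_{2n-1}$ on $F$ and real exponents $\alpha_i=\alpha_i(\pi)$ such that, uniformly over compact sets,
\begin{equation*}
|W(b\,k')|\le \prod_{i=1}^{2n-1}\phi_i(b_i/b_{i+1})\prod_{i=1}^{2n}|b_i|^{\alpha_i}
\end{equation*}
for $b=\mathrm{diag}(b_1,\ldots,b_{2n})$. On our interleaved diagonal the odd-index ratios $b_{2i-1}/b_{2i}=1$ give only constants, while the even-index ratios $b_{2i}/b_{2i+1}=a_i/a_{i+1}$ provide genuine Schwartz decay in each $a_i/a_{i+1}$, and the weights collapse to powers $|a_i|^{\alpha_{2i-1}+\alpha_{2i}}$. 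Gathering all factors --- the two copies of $\delta_B^{-1}(a)$, the gauge weights, the factor $|\det a|^{\Re s}$, and the decay from $\Phi$ in $|a_n|$ --- the bound on $|J|$ reduces to a finite product of absolutely convergent one-dimensional integrals, converging for $\Re(s)>1-\eta$ with an explicit $\eta=\eta(\pi)>0$.

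The main obstacle is careful bookkeeping: one must track the interaction between the two appearances of $\delta_B^{-1}(a)$, the gauge exponents $\alpha_i$ (reflecting how far $\pi$ is from tempered), and the shift $|\det a|^{\Re s}$, in order to pin down the correct $\eta$. Unitarity of $\pi$ together with nontrivial bounds toward Ramanujan keeps the $\alpha_i$ strictly bounded in the appropriate direction, which is precisely what produces the positive strip $\Re(s)>1-\eta$ of absolute convergence.
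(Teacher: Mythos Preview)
The paper does not give its own proof here; the proposition is quoted from Jacquet--Shalika (Proposition~1, Section~7 of \cite{jasha}) and the reader is referred there for the argument. That said, your outline has a genuine gap.

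The issue is the $Z$-integral. After your substitution you arrive at $W$ evaluated at the interleaved diagonal times $\sigma\begin{pmatrix}k&0\\0&k\end{pmatrix}\begin{pmatrix}1&Z\\0&1\end{pmatrix}$, and you describe the last factor as ``a controlled upper-unipotent element.'' But $Z$ still ranges over all of $\mathfrak{p}_{0,n}\backslash M_n$, a noncompact space of dimension $n(n-1)/2$, and the gauge estimate you invoke bounds $|W(b\,k')|$ only for $k'$ in a fixed compact set. Nothing in your sketch controls $|W|$ as the entries of $Z$ go to infinity. Note too that if one uses strictly lower-triangular representatives for $Z$, then $\sigma\begin{pmatrix}1&Z\\0&1\end{pmatrix}\sigma^{-1}$ has its nonzero off-diagonal entries $Z_{ij}$ (for $i>j$) in positions $(2i-1,2j)$, which lie \emph{below} the diagonal in $GL_{2n}$; this is not an element of $N_r$, so the left-$N_r$ transformation rule for $W$ is unavailable, and there is no oscillatory factor left in $Z$ to save you.

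This is exactly the obstacle Jacquet--Shalika have to work to overcome. Their argument (mirrored in this paper in the inductive proof of Lemma~\ref{gauge}) treats the $Z$-integral one row at a time: using Dixmier--Malliavin in the archimedean case, one writes $W$ as a finite sum of convolutions against Schwartz functions in the relevant unipotent direction, converting each row of the $Z$-integral into a Fourier transform so that what remains is again a Whittaker function. Only after this reduction does the torus gauge estimate apply to the remaining $a$-integral and yield convergence for $\Re(s)>1-\eta$. Your accounting of the $\delta_B^{-1}$ factors and the one-dimensional tail integrals is in the right spirit, but the hard step---controlling the noncompact $Z$-integration---is missing.
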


We wish to prove the following theorem:

\begin{thm}\label{main}Suppose $F$ is a local field. Fix a complex number $s_0$. There exist $\Phi$ and $W$, depending on $s_0$, such that $J(s,\chi,\Phi,W)$ extends meromorphically to the whole complex plane, and $$J(s_0,\chi,\Phi,W)\neq 0,$$ possibly with $J(s,\chi,\Phi, W)$ having a pole at $s=s_0$ if $F$ is archimedean. If $F$ is nonarchimedean, then $\Phi$ and $W$ can be chosen so that $J(s,\chi,\Phi,W)$ is entire.\end{thm}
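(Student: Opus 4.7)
The plan is to use the factorization (\ref{summary}),
\[
J(s,\chi,\Phi,W)=\int_{P_n\backslash GL_n}J_1(s,h,\chi,W)\,\Phi(\epsilon_n h)\,dh,
\]
which is to be derived in Section 2 by splitting the integration variable $g\in N_n\backslash GL_n$ of (\ref{Jdef}) as $g=g_1 h$ with $g_1\in N_n\backslash P_n$ and $h\in P_n\backslash GL_n$; since $\epsilon_n p=\epsilon_n$ for every $p\in P_n$, the factor $\Phi(\epsilon_n g)$ depends only on the coset $h$, so $J_1$ absorbs the $Z$-integral and the $g_1$-integral. Proposition \ref{absconv} gives this identity initially on the half-plane $\Re(s)>1-\eta$. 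The central analytic input for producing nonvanishing is Lemma \ref{J1}: for any prescribed $s_0$ we can select $W\in\mathcal{W}(\pi,\psi)$ so that $s\mapsto J_1(s,1_n,\chi,W)$ is entire and nonzero at $s_0$. With such a $W$ fixed, the strategy is to localize the outer integration near the identity coset, so that (\ref{summary}) essentially evaluates $J_1$ at $h=1_n$.

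In the nonarchimedean case this is clean: $J_1(s,h,\chi,W)$ is locally constant in $h$, so there is an open compact neighborhood $U$ of the identity coset in $P_n\backslash GL_n$ on which $J_1(s,h,\chi,W)=J_1(s,1_n,\chi,W)$. Taking $\Phi$ to be a scalar multiple of the characteristic function of $\epsilon_n U$, the right-hand side of (\ref{summary}) collapses to a nonzero constant times $J_1(s,1_n,\chi,W)$, which is entire in $s$ and nonzero at $s_0$.

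The archimedean case is where the main obstacle lies. Here $J_1(s,h,\chi,W)$ is only guaranteed absolutely convergent for $\Re(s)>1-\eta$, with $\eta$ not uniform in $h$, so one cannot substitute a bump $\Phi$ and swap integrals without first producing a meromorphic continuation of $h\mapsto J_1(s,h,\chi,W)$ that depends on $h$ in a controlled way. Following the outline of Section 3, I plan to combine a gauge estimate for the Whittaker function (modifying \cite[Section 4]{jasha}) with a generalized Mellin-transform expansion of the Schwartz-Bruhat and smooth factors in the integrand, producing an identity that continues $J_1(s,h,\chi,W)$ meromorphically to all of $\mathbb{C}$, with poles confined to an explicit discrete set and Laurent coefficients depending continuously on $h$ over small compact neighborhoods of $1_n$. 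Granting this, an approximation-of-identity argument---taking a sequence of Schwartz functions $\Phi_k$ concentrated at $\epsilon_n$---forces (\ref{summary}) to tend to $J_1(s_0,1_n,\chi,W)\neq 0$ in the sense of leading Laurent coefficients at $s_0$, so for sufficiently concentrated $\Phi$ the meromorphic function $J(s,\chi,\Phi,W)$ has nonzero leading Laurent coefficient at $s_0$, possibly corresponding to a pole (which is the source of the caveat in the theorem statement). The principal technical hurdle is precisely this $h$-uniform meromorphic continuation: the natural convergence domain of $J_1$ degenerates as $h$ moves, and controlling the integrand uniformly in the $Z$ and $g$ variables is where the gauge and Mellin-transform analysis of Section 3 will have to do its real work.
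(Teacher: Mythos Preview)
Your plan is essentially the paper's own, in both the nonarchimedean and archimedean cases: the factorization (\ref{summary}), Lemma \ref{J1} for the entire-in-$s$ nonvanishing of $J_1(s,1_n,\chi,W)$, local constancy plus a characteristic-function $\Phi$ in the nonarchimedean case, and gauge/Mellin analysis to produce an $h$-continuous meromorphic continuation of $J_1$ followed by an approximation-of-identity in $\Phi$ in the archimedean case. One small technical sharpening the paper carries out that you leave implicit: rather than reasoning loosely about ``leading Laurent coefficients,'' the paper writes $J_1(s,h,\chi,W)=H_1(s,h,W)/Q_M(s)$ with $Q_M$ a fixed polynomial independent of $h$ and $H_1$ holomorphic and $h$-continuous uniformly on compacta, and then, when $s_0$ is a zero of $Q_M$ of order $b$, uses the Cauchy integral $\frac{1}{2\pi i}\oint_\gamma H(s,W,\Phi)(s-s_0)^{-b-1}\,ds$ to pin down that the zero of $H(s,W,\Phi)$ at $s_0$ has order at most $b$; this cleanly avoids any ambiguity about pole-order varying with $h$.
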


\subsection{Preliminary Considerations}

Assume, for the moment, that $\Re(s)\gg0$, so that the integrals above converge absolutely, and that $F$ is any local field.

Let $P_n$ denote the mirabolic subgroup of $GL_n$, i.e., the subgroup consisting of matrices of the form
$$\left(\begin{array}{cc} g&u\\0&1\end{array}\right)$$ where $g\in GL_{n-1}$ and $u\in F^{n-1}$ as a column vector.

We can choose $\Phi$ to be compactly supported away from 0. Then $g\mapsto \Phi(\epsilon_n g)$ is arbitrary among those functions which are smooth on $GL_n$, invariant on the left under the subgroup $P_n$, and of compact support modulo that subgroup.

Then we can decompose $N_n\backslash GL_n$ as a product of quotient spaces $$N_n\backslash GL_n\simeq \left(N_{n-1}\backslash GL_{n-1}\right)\cdot \left( P_n\backslash GL_n\right).$$
Using this decomposition, we can write the integral $J(s,\chi,\Phi,W)$ as 
\begin{multline}\int_{P_n\backslash GL_n}\int_{N_{n-1}\backslash GL_{n-1}}\int_{\mathfrak{p}_{0,n}\backslash M_n}W\left[\sigma\left(\begin{array}{cc}1_n&Z\\0&1_n\end{array}\right)\left(\begin{array}{cccc} g &0&0&0\\0&1&0&0\\0&0&g&0\\0&0&0&1\end{array}\right)\left(\begin{array}{cc} h &0\\0&h\end{array}\right)\right] \\
\times \psi(-\textrm{Tr}(Z))dZ\,\chi(\det g)|\det g|^{s-1}dg\,\chi(\det h)|\det h|^{s}\Phi(\epsilon_n h)dh.\label{one}\end{multline}

Remark: Note that the cosets of $ P_n\backslash GL_n$ are completely determined by the bottom row of the matrix $h$, i.e., by $\epsilon_n h$. In this case, $\Phi(\epsilon_n \cdot) $ is a smooth function on this homogeneous space.

For $h\in GL_n$ set \begin{IEEEeqnarray}{rCl}J_1(s,h,\chi,W)&=& 
\chi(\det h)|\det h|^{s-1}\nonumber \\
 &&\times  \int_{N_{n-1}\backslash GL_{n-1}}\int_{\mathfrak{p}_{0,n}\backslash M_n}W\left[\sigma\left(\begin{array}{cc}1_n&Z\\0&1_n\end{array}\right)\left(\begin{array}{cccc} g &0&0&0\\0&1&0&0\\0&0&g&0\\0&0&0&1\end{array}\right)\left(\begin{array}{cc} h &0\\0&h\end{array}\right)\right]
 \nonumber \\ && \times \psi(-\textrm{Tr}(Z))dZ\,\chi(\det g)|\det g|^{s-1}dg.\label{j1}\end{IEEEeqnarray} 

Note that the proof of Proposition \ref{absconv} (see \cite{jasha}) shows that this integral converges absolutely for $\Re(s)\gg0$. As a function of $h$, $J_1(s,h,\chi,W)$ is invariant on the left by elements of $P_n$, so we will often treat it as a function of the homogeneous space $P_n\backslash GL_n$.

We have, for $\Re(s)\gg 0$:

\begin{equation}J(s,\chi,\Phi,W)=\int_{P_n\backslash GL_n} J_1(s,h,\chi,W)\Phi(\epsilon_n h)dh.\label{jj1real}\end{equation}

We will prove the following lemma concerning the function $J_1$.

\begin{lem}\label{J1} Let $s_0$ be a complex number. There exists $W\in\mathcal{W}(\pi,\psi)$ such that $J_1(s,1_n,\chi,W)$ converges absolutely for all $s$ and $$J_1(s_0,1_n,\chi,W)\neq 0.$$\end{lem}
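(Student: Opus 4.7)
The plan is to invoke Jacquet's result \cite{jacquetnew}: for an irreducible generic $\pi$, one can choose $W \in \mathcal{W}(\pi, \psi)$ so that the restriction $W|_{P_r}$ is any prescribed smooth $(N_r, \theta_r)$-equivariant function of compact support modulo $N_r$. I will use this to force the integrand in (\ref{j1}) to have compact support in the noncompact directions of integration, and then tune $W$ to achieve the nonvanishing at $s = s_0$.

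First I would fix explicit sections: take $Z$ to range over strictly lower-triangular $n\times n$ matrices (representatives for $\mathfrak{p}_{0,n}\backslash M_n$), and, via the Iwasawa decomposition, write $g = ak$ with $a \in A_{n-1}$ diagonal and $k \in K_{n-1}$ compact. A direct calculation of its bottom row shows that
$$M(Z,g) := \sigma\begin{pmatrix}1_n & Z \\ 0 & 1_n\end{pmatrix}\mathrm{diag}(g',g'), \qquad g' = \mathrm{diag}(g,1),$$
lies in the mirabolic $P_r$, so $W$ is evaluated only on $P_r$ and Jacquet's result becomes applicable.

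The main technical step is a properness statement: I would show that the map $(Z, a, k) \mapsto \overline{M(Z, ak)} \in N_r \backslash P_r$ is proper in the noncompact variables $(Z, a)$ uniformly in $k$, i.e., as the diagonal entries of $a$ or the entries of the strictly lower-triangular $Z$ escape to $0$ or $\infty$, $\overline{M(Z, ak)}$ exits every compact subset of $N_r \backslash P_r$. Given properness, choosing $W$ so that $W|_{P_r}$ is compactly supported modulo $N_r$ forces the integrand to be compactly supported in $(Z, a, k)$; since $\chi(\det g)|\det g|^{s-1}$ is bounded on any compact set for every fixed $s \in \mathbb{C}$, $J_1(s, 1_n, \chi, W)$ converges absolutely and defines an entire function.

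For the nonvanishing, the flexibility in Jacquet's result permits me to choose $W|_{P_r}$ supported in an arbitrarily small neighborhood of any preferred point of $N_r \backslash P_r$, with controlled phase. Picking $W$ so that the full integrand of (\ref{j1}) at $s = s_0$ is nonnegative (absorbing the phase of $\chi(\det g)|\det g|^{s_0-1}$ on the tiny support) and not identically zero yields $J_1(s_0, 1_n, \chi, W) \neq 0$. The anticipated main obstacle is the properness claim: one must inspect carefully how the strictly lower-triangular entries of $Z$ and the toric parameters of $a$ redistribute into the entries of $M(Z, ak)$ after the interleaving by $\sigma$, and verify that, modulo $N_r$, they indeed occupy noncompact directions rather than being absorbed into the upper-triangular unipotent.
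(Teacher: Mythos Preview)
Your approach is correct and is essentially the paper's: after the cosmetic right-translation $W\mapsto\rho(\sigma^{-1})W$ (so the argument lands in $GL_{r-1}\subset GL_r$ rather than merely $P_r$), the paper proves your properness claim as Lemma~\ref{existence} by showing that $(N_{r-1}\cap H)\backslash H$ is a closed regular submanifold of $N_{r-1}\backslash GL_{r-1}$ via a Lie-algebra/tangent-space check, which is exactly the ``redistribution under $\sigma$'' computation you anticipate. For the nonvanishing step the paper handles the phase you allude to by building a smooth extension $\Theta_{r-1}$ of $\theta_{r-1}$ to all of $GL_{r-1}$ (via Gram--Schmidt), writing $\phi_0=\Theta_{r-1}\cdot f$ with $f\in C_c^\infty(N_{r-1}\backslash GL_{r-1})$, and then taking $f$ to be a bump near the identity---the same localization you describe.
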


We will use the following result due to Gel{$'$}fand and Kajdan \cite{GK75} and Jacquet \cite{jacquetnew}:

\begin{prop}\label{smoothvector} Let $C_c^\infty(\theta_{r-1},GL_{r-1})$ be the space of smooth functions $\phi$ on $GL_{r-1}$, compactly supported  modulo $N_{r-1}$, such that $\phi(ug)=\theta_{r-1}(u)\phi(g)$ for $u\in N_{r-1}$, $g\in GL_{r-1}$. For every $\phi_0\in C_c^\infty(\theta_{r-1},GL_{r-1})$ there is a unique $W_{\phi_0}\in \mathcal{W}(\pi,\psi)$ such that for every $g\in GL_{r-1}$ $$W_{\phi_0}\left[\left(\begin{array}{cc}g&0\\0&1\end{array}\right)\right]=\phi_0(g).$$

Furthermore, in the case $F=\mathbb{R}$ or $\mathbb{C}$, the map $\phi_0\mapsto W_{\phi_0}$ is continuous.

\end{prop}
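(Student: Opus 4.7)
The plan is to split the assertion into existence, uniqueness, and (in the archimedean case) continuity, reducing each to a known statement about the Kirillov-type model of $\pi$ on the mirabolic subgroup $P_r$. The first step is to promote $\phi_0$ to a function $\tilde{\phi}_0$ on the full mirabolic $P_r\subset GL_r$. Write $V$ for the abelian unipotent subgroup of $P_r$ consisting of matrices $\bigl(\begin{smallmatrix}1_{r-1}&u\\0&1\end{smallmatrix}\bigr)$ with $u\in F^{r-1}$. Then $P_r=V\rtimes GL_{r-1}$ and $N_r=V\rtimes N_{r-1}$, so every $p\in P_r$ can be written (non-uniquely) as $p=n\cdot\mathrm{diag}(g,1)$ with $n\in N_r$ and $g\in GL_{r-1}$. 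Define $\tilde{\phi}_0(p):=\theta_r(n)\phi_0(g)$; two such decompositions differ by an element of $N_r\cap\mathrm{diag}(GL_{r-1},1)=N_{r-1}$, and the compatibility $\theta_r|_{N_{r-1}}=\theta_{r-1}$ together with the $(\theta_{r-1},N_{r-1})$-equivariance of $\phi_0$ makes $\tilde{\phi}_0$ well defined. Under the natural bijection $N_r\backslash P_r\simeq N_{r-1}\backslash GL_{r-1}$, smoothness and compact support modulo $N_{r-1}$ pass to smoothness and compact support modulo $N_r$, so $\tilde{\phi}_0\in C_c^\infty(\theta_r, N_r\backslash P_r)$.

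For existence, I would invoke the fundamental inclusion $C_c^\infty(\theta_r,N_r\backslash P_r)\subseteq\mathcal{W}(\pi,\psi)|_{P_r}$, due to Gel$'$fand--Kajdan \cite{GK75} at nonarchimedean places and to Jacquet \cite{jacquetnew} at archimedean ones. This produces a Whittaker function $W_{\phi_0}$ whose restriction to $P_r$ equals $\tilde{\phi}_0$, so $W_{\phi_0}(\mathrm{diag}(g,1))=\phi_0(g)$ as required. For uniqueness, if $W\in\mathcal{W}(\pi,\psi)$ vanishes on the diagonal image of $GL_{r-1}$, then left $N_r$-equivariance forces $W|_{P_r}=0$, and the injectivity of $\mathcal{W}(\pi,\psi)\to C^\infty(P_r)$---a statement of the Kirillov-model theorem, due to Bernstein in the nonarchimedean case and to Jacquet \cite{jacquetnew} in the archimedean case---then forces $W\equiv 0$.

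For the archimedean continuity claim, I would topologize the two spaces with their standard LF/Fr\'echet topologies. The map $\phi_0\mapsto\tilde{\phi}_0$ is manifestly continuous, and the content lies in the continuity of $\tilde{\phi}_0\mapsto W_{\phi_0}$ into the Casselman--Wallach globalization of $\mathcal{W}(\pi,\psi)$. Jacquet's construction in \cite{jacquetnew} yields a $P_r$-equivariant section of the restriction map bounded in appropriate Sobolev seminorms, which furnishes exactly this; alternatively, one can combine existence with the closed graph theorem for maps between Fr\'echet or LF spaces.

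The main obstacle is the archimedean half of existence and injectivity. The compactly supported smooth vectors that drive the Gel$'$fand--Kajdan argument at nonarchimedean places are simply not available archimedeanly, and Jacquet's workaround in \cite{jacquetnew} requires the Dixmier--Malliavin factorization theorem together with delicate moderate-growth estimates for Whittaker functions. The rest of the plan above is essentially packaging that reduces the claim to this genuinely deep input.
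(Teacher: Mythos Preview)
Your proposal is correct and takes essentially the same approach as the paper: both rest on the Gel$'$fand--Kajdan result \cite{GK75} in the nonarchimedean case and on Jacquet's archimedean analogue \cite{jacquetnew}. The paper's proof is simply a citation to these sources, whereas you have spelled out the standard reduction---extending $\phi_0$ from $GL_{r-1}$ to the mirabolic $P_r$ via $\theta_r$-equivariance---that identifies the statement with the Kirillov-model theorem being cited.
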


Remark: This, of course, holds for $r$ an odd integer as well. 

\begin{proof}

The nonarchimedean case is due to Gel{$'$}fand and Kajdan \cite{GK75}. See also Lemma 1, Section 2 of \cite{jacquetnew}.

For $F=\mathbb{R}$, this is Proposition 5 of section 3 in \cite{jacquetnew}. The arguments used in \cite{jacquetnew} also apply to $F=\mathbb{C}$. 

\end{proof}

\section{The Archimedean Case}

Assume throughout this section that $F=\mathbb{R}$ or $\mathbb{C}$.

\subsection{Some Technical Results}

The bulk of the work in establishing Lemma \ref{J1} in the archimedean case is found in the following technical lemmas. While similar results can be obtained for nonarchimedean fields, we will restrict ourselves to the archimedean case.

For $0\leq l\leq n$ and $\Phi$ a Schwartz-Bruhat function on $F^l$, set \begin{equation}W_1(g)=\int_{F^l}W\left[g\left(\begin{array}{cccccc} 1_l&0&0&0&0&0\\0&1&0&0&0&0\\0&0&1_{n-l-1}&0&0&0\\0&0&0&1_l&u&0\\0&0&0&0&1&0\\0&0&0&0&0&1_{n-l-1}\end{array}\right)\right]\Phi(u)du.\label{w1}\end{equation}

Note that this integral is absolutely convergent. 
Since $F$ is archimedean, recall (see, for example, \cite{jacquet}) that there is an integer $N>0$ and for $W\in \mathcal{W}(\pi,\psi)$, a constant $C$ such that $$|W(g)|\leq C||g||^N.$$ Keeping this bound in mind, integrating against a Schwartz-Bruhat function $\Phi$ results in a convergent integral.

Recall that $W$ transforms under the character $$\theta_r\left(\begin{array}{ccccc}1&u_1&\cdots&\ast& \ast\\0&1&\cdots&\ast&\ast\\ \vdots& \vdots& \ddots& \vdots&\vdots\\ 0& 0 &\cdots & 1&u_{2n-1}\\ 0& 0& \cdots& 0 &1\end{array}\right)=\psi\left(\sum_{j=1}^{2n-1} u_j\right),$$ of $N_{r}$. 

So, for $n\in N_{r}$, we have $W_1(ng)=\theta_r(n)W_1(g).$

\begin{lem}\label{dixmier1} Let $F=\mathbb{R}$ or $\mathbb{C}$, $\psi$ a non-trivial additive character, $r=2n$ an even integer, and $\pi$ a unitary irreducible representation of $GL_r(F)$ with Whittaker model $\mathcal{W}(\pi,\psi)$.
\begin{itemize}
\item[(a)] Given $W\in\mathcal{W}(\pi,\psi)$ and a Schwartz-Bruhat function $\Phi$ on $F^l$, the function $W_1$ defined by $$W_1(g)=\int_{F^l}W\left[g\left(\begin{array}{cccccc} 1_l&0&0&0&0&0\\0&1&0&0&0&0\\0&0&1_{n-l-1}&0&0&0\\0&0&0&1_l&u&0\\0&0&0&0&1&0\\0&0&0&0&0&1_{n-l-1}\end{array}\right)\right]\Phi(u)du$$ is again in $\mathcal{W}(\pi,\psi)$.

\item[(b)] Given $W\in\mathcal{W}(\pi,\psi)$, $W$ can be written as a finite sum 
$$W(g)=\sum_{j} \int_{F^l}W_j\left[g\left(\begin{array}{cccccc} 1_l&0&0&0&0&0\\0&1&0&0&0&0\\0&0&1_{n-l-1}&0&0&0\\0&0&0&1_l&u&0\\0&0&0&0&1&0\\0&0&0&0&0&1_{n-l-1}\end{array}\right)\right]\Phi_j(u)du$$
where the $\Phi_j$ are Schwartz-Bruhat functions on $F^l$ and the $W_j$ are in $\mathcal{W}(\pi,\psi)$.
\end{itemize}
\end{lem}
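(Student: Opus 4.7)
My approach is to recognize both parts as consequences of the natural $\pi$-action of a unipotent subgroup via right translation. First I would identify that the matrices appearing in the integrand, which I will denote $n(u)$, form an abelian unipotent subgroup $U \subset GL_r$ isomorphic to $F^l$ via $u \mapsto n(u)$; checking $n(u)n(v) = n(u+v)$ is immediate because the nontrivial entries of $n(u)-1_r$ and $n(v)-1_r$ lie in positions whose matrix product is zero. With this identification, the defining integral becomes
$$W_1(g) = (\pi(\Phi)W)(g) = \int_U \Phi(u)\,\pi(n(u))W(g)\,du,$$
the standard convolution action of the Schwartz function $\Phi$ on $W$ by right translation.

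For part (a), the plan is to invoke the fact that convolution by a Schwartz function on a Lie subgroup preserves the space of smooth vectors in a smooth Fréchet representation of moderate growth. The moderate-growth bound $|W(g)| \leq C\|g\|^N$ recalled just before the lemma ensures the convolution integral converges vector-valuedly in $\mathcal{W}(\pi,\psi)$, and the resulting vector is again smooth since $U$ acts smoothly on $\mathcal{W}(\pi,\psi)$. The Whittaker transformation property under $N_r$ follows by commuting left- and right-translation:
$$W_1(n_0 g) = \int W(n_0 g\, n(u))\Phi(u)\,du = \theta_r(n_0)\,W_1(g),$$
since $W$ itself transforms by $\theta_r$.

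For part (b), I would invoke the Dixmier-Malliavin smoothing theorem applied to the restriction of $\pi$ to the abelian Lie group $U \cong F^l$ acting on the smooth Fréchet space $\mathcal{W}(\pi,\psi)$. The theorem asserts that every smooth vector decomposes as a finite sum $W = \sum_j \pi(\phi_j) W_j$ with $\phi_j \in C_c^\infty(U)$ and $W_j$ smooth. Unwinding the identification $\pi(\phi_j)W_j \mapsto \int W_j(\cdot\, n(u))\phi_j(u)\,du$ yields precisely the formula in the statement; since $C_c^\infty(F^l) \subset \mathcal{S}(F^l)$, the $\phi_j$ are Schwartz-Bruhat, so the decomposition has the required form.

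The main obstacle is ensuring that Dixmier-Malliavin applies, i.e., that $\mathcal{W}(\pi,\psi)$ carries a smooth Fréchet structure in which the restricted action of $U$ is smooth. For unitary irreducible (and, more generally, Casselman-Wallach) representations of $GL_r(F)$ in the archimedean case this is well known, but it is the nontrivial input doing the real work; once it is in place, part (a) is a direct observation and part (b) is essentially a citation. A minor technical check along the way is that Schwartz (rather than compactly supported) functions indeed act on smooth vectors of a representation of moderate growth, which is standard.
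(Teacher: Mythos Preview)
Your proposal is correct and follows essentially the same approach as the paper: the paper writes $W(g)=\lambda(\pi(g)v)$ and observes that $W_1(g)=\lambda(\pi(g)v_1)$ with $v_1=\int_{F^l}\pi(n(u))\Phi(u)v\,du$, citing Wallach to conclude $v_1$ is smooth for part (a), and then invokes the Dixmier--Malliavin lemma for part (b). Your formulation via the convolution action $\pi(\Phi)W$ on the smooth Fr\'echet space of moderate growth is the same argument phrased at the level of Whittaker functions rather than abstract vectors; the only cosmetic difference is that you separately check the $\theta_r$-transformation property, which in the paper's formulation is automatic from $W_1(g)=\lambda(\pi(g)v_1)$.
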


\begin{proof} For part (a), if $W(g)=\lambda(\pi(g)v)$, then $W_1(g)=\lambda(\pi(g)v_1)$ where $$v_1=\int_{F^l}\pi\left(\begin{array}{cccccc} 1_l&0&0&0&0&0\\0&1&0&0&0&0\\0&0&1_{n-l-1}&0&0&0\\0&0&0&1_l&u&0\\0&0&0&0&1&0\\0&0&0&0&0&1_{n-l-1}\end{array}\right)\Phi(u)vdu.$$ 

Since $\Phi$ is a Schwartz-Bruhat function, $v_1$ is a $C^\infty$ vector by Lemma 2.6 of \cite{wallach}.
In fact, if we appeal to the Dixmier-Malliavin Lemma \cite{dixmier} as stated at the beginning of section 6 of \cite{jacquet}, any vector $v$ can be written as a finite sum $$v=\sum_i\int_{F^l}\pi\left(\begin{array}{cccccc} 1_l&0&0&0&0&0\\0&1&0&0&0&0\\0&0&1_{n-l-1}&0&0&0\\0&0&0&1_l&u&0\\0&0&0&0&1&0\\0&0&0&0&0&1_{n-l-1}\end{array}\right)v_i\Phi_i(u)du$$
where the $\Phi_i$ are Schwartz-Bruhat functions (specifically, they can be taken to be smooth functions of compact support). This proves part (b) in the archimedean case.

\end{proof}

For $0\leq l\leq n$, $g\in GL_n(F)$ and $W\in\mathcal{W}(\pi,\psi)$, we define 
$$\Lambda_l(W,g)=\int_{\mathfrak{p}_{0,l}\backslash M_{l}}W\left[\sigma\left(\begin{array}{cccc}1_{l}&0&Z&0\\0&1_{n-l}&0 &0\\ 0&0&1_{l}&0\\0&0&0&1_{n-l}\end{array}\right)\left(\begin{array}{cc} g &0\\0&g\end{array}\right)\right]\psi(-\textrm{Tr}(Z))dZ.$$

Using the notation of Jacquet \cite{jacquetfest}, for $a_i\in F^\times$, we set $$\textrm{diar}[a_1,a_2,\ldots,a_{n-1}]=\textrm{diag}(a_1a_2\cdots a_{n-1},a_2\cdots a_{n-1},\ldots, a_{n-1},1).$$

Let $K_n$ denote the maximal compact subgroup of $GL_n(F)$.

By $\mathcal{S}(F^{n-1}\times K_n)$ we mean the space of Schwartz-Bruhat functions on $F^{n-1}\times K_n$. That is, the space of smooth, rapidly decreasing functions $\phi(a_1,a_2,\ldots,a_{n-1},k)$ on $F^{n-1}$, depending continuously on $k\in K_n$ (See the remark at the end of section 3.11 of \cite{jasha}).

\begin{lem}\label{gauge}Let $F=\mathbb{R}$ or $\mathbb{C}$. Fix $0\leq l\leq n$. For each $j$, $0\leq j\leq n-1$, there exists a finite set $C_j$ of characters and for each $\chi\in C_j$, an integer $b_\chi$ with the following 
property: let $X_j$ be the set of finite functions on $F^\times$ of the form $\chi(a)(\log|a|)^b$ with $\chi \in C_j$ and $b\leq b_\chi$, and let $X$ be the set of finite 
functions on $F^{n-1}$ which are products of functions in the sets $X_j$. Then for any $W\in \mathcal{W}(\pi,\psi)$, there are Schwartz-Bruhat functions $\phi_\xi$ in $\mathcal{S}(F^{n-1}\times K_n)$ such that, for $a=\textrm{diar}[a_1,a_2,\ldots,a_{n-1}]$ and $k\in K_n$
 \begin{equation}\Lambda_l(W,ak)=\sum_{\xi\in X}\phi_\xi(a_1,a_2,\ldots,a_{n-1},k)\xi(a_1,a_2,\ldots,a_{n-1}).\end{equation}

\end{lem}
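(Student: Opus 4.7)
The plan is to adapt the proof of the gauge representation for Whittaker functions developed in Section 4 of \cite{jasha}, extending it to accommodate the additional unipotent integration that defines $\Lambda_l$. The starting point is the standard gauge expansion for the Whittaker function $W$ on $GL_{2n}$: for any $W \in \mathcal{W}(\pi,\psi)$ one has $W(\textrm{diar}[A_1,\ldots,A_{2n-1}]K') = \sum_\eta \Psi_\eta(A_1,\ldots,A_{2n-1},K')\eta(A_1,\ldots,A_{2n-1})$, where $\Psi_\eta \in \mathcal{S}(F^{2n-1}\times K_{2n})$ and the $\eta$ range over a finite set of finite functions on the torus depending only on $\pi$. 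The case $l=0$ follows immediately: $\Lambda_0(W,ak) = W[\sigma\,\textrm{diag}(ak,ak)]$ and $\sigma\,\textrm{diag}(a,a)\sigma^{-1}$ is a diagonal matrix in $GL_{2n}$ whose \textrm{diar} coordinates are explicit monomials in the $a_i$.

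For general $l$, I first rewrite the argument of $W$ in the integrand. Let $a = \textrm{diar}[a_1,\ldots,a_{n-1}]$ and let $a_L$ denote the upper $l\times l$ block of $a$, which is again diagonal. Using the commutation $U(Z)\,\textrm{diag}(a,a) = \textrm{diag}(a,a)\,U(a_L^{-1}Za_L)$ (where $U(Z)$ denotes the unipotent matrix displayed in the definition of $\Lambda_l$, and $a_L^{-1}Za_L$ is just entrywise rescaling) together with $\sigma\,\textrm{diag}(a,a)\sigma^{-1} = \tilde{a}$, I make the change of variable $Y = a_L^{-1}Za_L$. This introduces a Jacobian $J(a)$ that is a monomial in the $a_i$, while $\textrm{Tr}(Y) = \textrm{Tr}(Z)$ is preserved by invariance of trace under conjugation. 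After the change the integrand is $W[\tilde{a}\,\sigma U(Y)\,\textrm{diag}(k,k)]\,\psi(-\textrm{Tr}(Y))$.

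Computing the Iwasawa decomposition $\sigma U(Y)\,\textrm{diag}(k,k) = n(Y,k)A(Y,k)K(Y,k)$ in $GL_{2n}$, applying the Whittaker transformation to $\tilde{a}\,n(Y,k)\,\tilde{a}^{-1}\in N_{2n}$, and substituting the gauge expansion yields a finite sum over $\eta$ of integrands of the form $\theta_{2n}(\tilde{a}\,n(Y,k)\,\tilde{a}^{-1})\,\Psi_\eta(\tilde{a}A(Y,k),K(Y,k))\,\eta(\tilde{a}A(Y,k))\,\psi(-\textrm{Tr}(Y))$. Since each $\eta$ is a product of characters and log-powers of the torus coordinates, I expand $\eta(\tilde{a}A(Y,k))$ using additivity of $\log$ on products as a finite sum $\sum_{\eta'}\eta'(\tilde{a})\,\tilde{\eta}(A(Y,k))$. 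Each $\eta'(\tilde{a})$ paired with $J(a)$ gives a finite function of $(a_1,\ldots,a_{n-1})$ of the type described in the set $X$, with the finite sets $C_j$ of characters and the bounds $b_\chi$ inherited from the corresponding data for $\pi$ on $GL_{2n}$.

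The main obstacle is verifying that for each choice of $\eta,\eta'$ the remaining $Y$-integration produces a function in $\mathcal{S}(F^{n-1}\times K_n)$. The difficulty is that $\Psi_\eta$ is Schwartz-Bruhat only jointly in the full $GL_{2n}$ torus coordinates and the compact part, while the $a$-dependence enters the integrand through both $\tilde{a}A(Y,k)$ and the character $\theta_{2n}(\tilde{a}\,n(Y,k)\,\tilde{a}^{-1})$. Controlling decay in the $a_i$ and smoothness in $(a,k)$ requires integration by parts in $Y$ against the oscillatory character $\psi(-\textrm{Tr}(Y))$, combined with Schwartz-Bruhat estimates on $\Psi_\eta$ and the regularity of the Iwasawa components $n(Y,k),A(Y,k),K(Y,k)$ as functions of $Y$ and $k$. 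To simplify the direct estimation, one can first apply Lemma \ref{dixmier1}(b) to replace $W$ by a finite sum of smoothed Whittaker functions, absorbing part of the $Y$-integration into convolutions of $W$ itself. Summing the resulting Schwartz-Bruhat contributions over the finitely many $\eta,\eta'$ yields the claimed gauge expansion for $\Lambda_l(W,ak)$.
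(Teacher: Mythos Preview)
Your base case $l=0$ agrees with the paper's, but your handling of general $l$ contains a genuine gap. After your Iwasawa step the integrand carries the phase $\theta_{2n}(\tilde a\,n(Y,k)\,\tilde a^{-1})$, and since the successive ratios of $\tilde a=\sigma\,\mathrm{diag}(a,a)\,\sigma^{-1}$ are $1,a_1,1,a_2,\ldots,1,a_{n-1}$, this phase equals $\psi\big(\sum_{i}a_i\,n(Y,k)_{2i,2i+1}\big)$ times something independent of $a$. This is genuinely oscillatory in the $a_i$. Meanwhile the Schwartz control you get from $\Psi_\eta$ is in the variables $a_i\cdot A(Y,k)_{2i}/A(Y,k)_{2i+1}$, which is ineffective on the portion of the noncompact $Y$-domain where those Iwasawa ratios degenerate. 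So neither factor by itself forces rapid decay in $a$, and your proposed cure---integration by parts against $\psi(-\mathrm{Tr}(Y))$---attacks the wrong oscillation: the problematic phase is in $a$, not in the variable you are integrating. Your closing appeal to Lemma~\ref{dixmier1}(b) is the right instinct but is left unspecified; without saying \emph{which} unipotent direction to smooth in and \emph{why} that smoothing kills the $a$-oscillation, it is not a proof.

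The paper avoids this analytic difficulty entirely by running an induction on $l$ and never inserting the gauge expansion until all unipotent integration has been absorbed back into the Whittaker model. The inductive identity is
\[
\Lambda_{l+1}(W,a)=\frac{1}{|a_1||a_2|^2\cdots|a_l|^l}\sum_j \Lambda_l(W_j',a),
\]
obtained as follows: write the $(l{+}1)$-st integration as an $l\times l$ block $Z'$ plus a row $Y\in F^l$; use Lemma~\ref{dixmier1}(b) in the specific column direction $u$ of the lower-right block to write $W=\sum_j\int W_j(\cdot\,U(u))\Phi_j(u)\,du$; then the Whittaker transformation law produces $\psi(Yu)$, so that the $u$-integral is $\widehat{\Phi_j}(Y)$ and the remaining $Y$-integral is exactly a Lemma~\ref{dixmier1}(a) smoothing, yielding a new $W_j'\in\mathcal W(\pi,\psi)$. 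Thus the extra $Y$-integration becomes a Fourier transform that lands back in $\mathcal W(\pi,\psi)$, and one never has to estimate a noncompact $Y$-integral against an $a$-dependent phase. The finite set $X$ changes from step to step only by the explicit monomial $|a_1|^{-1}\cdots|a_l|^{-l}$, which is why the data $C_j,b_\chi$ stay finite and independent of $W$.
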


\begin{proof} For now, we drop the dependence on $k$. 
Consider $\Lambda_l(W,a)$ with $a=\textrm{diar}[a_1,\ldots,a_{n-1}]$

Note that, in the case $l=0$,
 $$\Lambda_0(W,a)=W\left[\sigma \left(\begin{array}{cc}a&0\\0&a\end{array}\right)\right]=W[b\sigma]$$ 
where $$b=\textrm{diag}(a_1 a_2\cdots a_{n-1},a_1 a_2\cdots a_{n-1},a_2\cdots a_{n-1},a_2\cdots a_{n-1},\ldots,a_{n-1},a_{n-1},1,1)$$
$$=\textrm{diar}[1,a_1,1,a_2,\ldots, a_{n-1},1]$$
and the assertion follows directly from Proposition 3, Section 4 of \cite{jasha}.

Now, assume the lemma holds for $\Lambda_{l}$. We wish to show that it also holds for $\Lambda_{l+1}$.

We write $$\left(\begin{array}{cccc}1_{l+1}&0&Z&0\\0&1_{n-l-1}&0 &0\\ 0&0&1_{l+1}&0\\0&0&0&1_{n-l-1}\end{array}\right)=\left(\begin{array}{cccccc} 1_l&0&0&Z'&0&0\\0&1&0&Y&0&0\\0&0&1_{n-l-1}&0&0&0\\0&0&0&1_l&0&0\\0&0&0&0&1&0\\0&0&0&0&0&1_{n-l-1}\end{array}\right)$$
 where $Z'\in \mathfrak{p}_{0,l}\backslash M_{l}$ and $Y$ is a row vector in $F^l$ 
 so that,
 after some matrix manipulations and a change of variables, we get 
 \begin{IEEEeqnarray}{rl}\Lambda_{l+1}(W,a)=\frac{1}{|a_1||a_2|^2\cdots|a_l|^l}&\int_{\mathfrak{p}_{0,l}\backslash M_{l}}\int_{F^l}W\left[\sigma\left(\begin{array}{cccc}1_{l}&0&Z'&0\\0&1_{n-l}&0 &0\\ 0&0&1_{l}&0\\0&0&0&1_{n-l}\end{array}\right)\right.\nonumber \\
 &\times \left.\left(\begin{array}{cc} a &0\\0&a\end{array}\right)\left(\begin{array}{cccccc} 1_l&0&0&0&0&0\\0&1&0&Y&0&0\\0&0&1_{n-l-1}&0&0&0\\0&0&0&1_l&0&0\\0&0&0&0&1&0\\0&0&0&0&0&1_{n-l-1}\end{array}\right)\right]\nonumber \\
 & \times \psi(-\textrm{Tr}(Z'))dY\,dZ'.\nonumber\end{IEEEeqnarray}

In light of part (b) of Lemma \ref{dixmier1}, we may assume that $W$ is a finite sum of the form $$W(g)=\sum_j \int_{F^l}W_j\left[g\left(\begin{array}{cccccc} 1_l&0&0&0&0&0\\0&1&0&0&0&0\\0&0&1_{n-l-1}&0&0&0\\0&0&0&1_l&u&0\\0&0&0&0&1&0\\0&0&0&0&0&1_{n-l-1}\end{array}\right)\right]\Phi_j(u)du$$ where $\Phi_j$ is a Schwartz-Bruhat function on $F^l$.

Thus, we have 
\begin{IEEEeqnarray}{rCl}\Lambda_{l+1}(W,a)&=&\frac{1}{|a_1||a_2|^2\cdots|a_l|^l}\nonumber \\
&\times & \sum_j \int_{\mathfrak{p}_{0,l}\backslash M_{l}}\int_{F^l}\int_{F^l} W_j \left[\sigma\left(\begin{array}{cccccc} 1_l&0&0&0&Z'u^\prime&0\\0&1&0&0&Yu&0\\0&0&1_{n-l-1}&0&0&0\\0&0&0&1_l&u^\prime&0\\0&0&0&0&1&0\\0&0&0&0&0&1_{n-l-1}\end{array}\right)\right.\nonumber \\ 
&& \left.\left(\begin{array}{cccc}1_{l}&0&Z'&0\\0&1_{n-l}&0 &0\\ 0&0&1_{l}&0\\0&0&0&1_{n-l}\end{array}\right)\left(\begin{array}{cc} a &0\\0&a\end{array}\right)\left(\begin{array}{cccccc} 1_l&0&0&0&0&0\\0&1&0&Y&0&0\\0&0&1_{n-l-1}&0&0&0\\0&0&0&1_l&0&0\\0&0&0&0&1&0\\0&0&0&0&0&1_{n-l-1}\end{array}\right)\right]\nonumber \\ 
&\times&\Phi_j(u)du\,dY\,\psi(-\textrm{Tr}(Z'))dZ'\nonumber \end{IEEEeqnarray}
where $$u^\prime={}^t(a_1a_2\cdots a_lu_1,a_2\cdots a_l u_2, \ldots, a_lu_l).$$

Note that the conjugate under $\sigma$ of the first matrix on the left is in $N_{2n}$. 
Furthermore, $$\theta\left(\sigma\left(\begin{array}{cccccc} 1_l&0&0&0&Z'u^\prime&0\\0&1&0&0&Yu&0\\0&0&1_{n-l-1}&0&0&0\\0&0&0&1_l&u^\prime&0\\0&0&0&0&1&0\\0&0&0&0&0&1_{n-l-1}\end{array}\right)\sigma^{-1}\right)=\psi(Yu).$$ 

Using the transformation property of $W_j$ with respect to $\theta$, and setting 
$$\widehat{\Phi}(Y)=\int_{F^l}\Phi(u)\psi(Yu)du,$$ we have 
\begin{IEEEeqnarray}{rCl}\Lambda_{l+1}(W,a)&=&\frac{1}{|a_1||a_2|^2\cdots|a_l|^l}\sum_j \int_{\mathfrak{p}_{0,l}\backslash M_{l}}\int_{F^l}W_j\left[\sigma\left(\begin{array}{cccc}1_{l}&0&Z'&0\\0&1_{n-l}&0 &0\\ 0&0&1_{l}&0\\0&0&0&1_{n-l}\end{array}\right)\right.\nonumber \\
&& \times\left.\left(\begin{array}{cc} a &0\\0&a\end{array}\right)\left(\begin{array}{cccccc} 1_l&0&0&0&0&0\\0&1&0&Y&0&0\\0&0&1_{n-l-1}&0&0&0\\0&0&0&1_l&0&0\\0&0&0&0&1&0\\0&0&0&0&0&1_{n-l-1}\end{array}\right)\right]\widehat{\Phi_j}(Y)dY\,\psi(-\textrm{Tr}(Z'))dZ'.\nonumber \end{IEEEeqnarray}

Set $$W_j'(g)=\int_{F^l}W_j\left[g\left(\begin{array}{cccccc} 1_l&0&0&0&0&0\\0&1&0&Y&0&0\\0&0&1_{n-l-1}&0&0&0\\0&0&0&1_l&0&0\\0&0&0&0&1&0\\0&0&0&0&0&1_{n-l-1}\end{array}\right)\right]\widehat{\Phi_j}(Y)dY.$$
Note that $\widehat{\Phi_j}$ is a Schwartz-Bruhat function. In light of the proof of Lemma \ref{dixmier1}, $W_j'$ again defines an element of $\mathcal{W}(\pi,\psi)$.

Thus \begin{IEEEeqnarray}{rCl}\Lambda_{l+1}(W,a)&=&\frac{1}{|a_1||a_2|^2\cdots|a_l|^l}  \sum_j \int_{\mathfrak{p}_{0,l}\backslash M_{l}}W_j'\left[\sigma\left(\begin{array}{cccc}1_{l}&0&Z'&0\\0&1_{n-l}&0 &0\\ 0&0&1_{l}&0\\0&0&0&1_{n-l}\end{array}\right)\left(\begin{array}{cc} a &0\\0&a\end{array}\right)\right]\psi(-\textrm{Tr}(Z'))dZ',\nonumber \end{IEEEeqnarray}
or
$$\Lambda_{l+1}(W,a)=\sum_j\frac{1}{|a_1||a_2|^2\cdots|a_l|^l} \Lambda_{l}(W_j',a).$$

We see that the integral $\Lambda_{l+1}$ can be computed as a finite linear combination of the integrals $\Lambda_l$.

The assertion now follows by induction. Note that the fixed set of finite functions is changed by multiplying by the factor $$\frac{1}{|a_1||a_2|^2\cdots|a_l|^l}$$ but remains finite and independent of the choice of $W$.

Now we end the proof by noting, as in Section 3.11 and 4.4 of \cite{jasha},  that the Schwartz-Bruhat functions $\phi_\xi$ can be chosen to depend continuously on $W$, and as the procress of changing from $W$ to $\sum_j W_j'$ is a continuous map, we have the continuous dependence on $k$, as desired.

\end{proof}


We now consider integrals of the type $$\int_{(F^\times)^{n-1}} \phi(a_1,a_2,\ldots,a_{n-1})\mu_1(a_1)\mu_2(a_2)\cdots \mu_{n-1}(a_{n-1})d^\times a_1\, d^\times a_2\cdots d^\times a_{n-1}$$ where $\phi$ is a Schwartz-Bruhat function on $F^{n-1}$ and the $\mu_j$ are finite functions on $F^\times$ like those occurring in Lemma \ref{gauge}. This type of integral transform is sometimes called a generalized Mellin Transform of the function $\phi$. 
The analytic properties for such integrals in the one variable case are summarized in section 3.3 of \cite{jasha}.

We wish to generalize to the multivariable setting. 

\begin{prop}\label{expansion}
For each $j$, $1\leq j\leq n-1$, let $\mu_j$ denote a finite function on $F^\times$ of the form $$\mu_j(a)=\chi_0(a)^{m_j}|a|^{s+\lambda_j}(\log |a|)^{n_j}$$ where $s$ and $\lambda_j\in\mathbb{C}$, $n_j$ and $m_j$ are nonnegative integers and $\chi_0(a)=\frac {a}{|a|}$ if $F=\mathbb{R}$ or $\chi_0(a)=\frac {a}{|a|^{1/2}}$ if $F=\mathbb{C}$.

Let $\phi$ be a Schwartz-Bruhat function on $F^{n-1}$. Then the integral \begin{equation}F_\phi(s)=\int_{(F^\times)^{n-1}} \phi(a_1,a_2,\ldots,a_{n-1})\mu_1(a_1)\mu_2(a_2)\cdots \mu_{n-1}(a_{n-1})d^\times a_1 d^\times\, a_2\cdots d^\times a_{n-1}\label{genericgauge}\end{equation}
converges absolutely for $\Re(s)\gg 0$. Furthermore, it has a meromorphic continuation to the whole complex plane. In particular, if $M$ is any real number, then there exits a polynomial $Q_M(s)$, depending only on $M$ and $\mu_j$'s, such that, for each $\phi$, there exists a function $H_\phi(s)$ which is holomorphic in $s$ for $\Re(s)>-M$ such that, for $\Re(s)>-M$, \begin{equation}F_\phi (s)=\frac {H_\phi (s)}{Q_M (s)}.\label{gaugeexpansion}\end{equation}

Furthermore, $H_\phi$ depends continuously on $\phi$ in the following sense: For any compact subset $C$ of $\mathbb{C}$ contained in the right half plane $\Re(s)>-M$, there exists a continuous seminorm $p_C$ on the space of Schwartz-Bruhat functions on $F^{n-1}$, such that 
\begin{equation}|H_\phi(s)|\leq p_C(\phi)\label{continuity}\end{equation} for all $s\in C$.
\end{prop}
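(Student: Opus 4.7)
The plan is to establish absolute convergence for $\Re(s)\gg 0$, prove the meromorphic continuation by induction on the number of variables $n-1$, and then read off the continuity estimate by tracking seminorm bounds through the inductive construction.

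For absolute convergence, Schwartz-Bruhat decay of $\phi$ together with the polynomial-times-logarithmic bound $|\mu_j(a_j)|\le C|a_j|^{\Re(s)+\Re(\lambda_j)}(1+|\log|a_j||)^{n_j}$ reduces matters to a standard estimate: decomposing $(F^\times)^{n-1}$ according to whether each $|a_j|\ge 1$ or $|a_j|<1$, the regions with some $|a_j|\ge 1$ are dominated by the rapid decay of $\phi$, while on the region with every $|a_j|<1$ we use $\Re(s)+\Re(\lambda_j)>0$ for all $j$.

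For the meromorphic continuation I would induct on $n-1$. The base case $n-1=1$ is the one-variable result recorded in Section 3.3 of \cite{jasha}, proved by a Tate-style Taylor expansion of $\phi$ near the origin that extracts explicit polar contributions (absorbed into $Q_M(s)$) and leaves a remainder holomorphic on $\Re(s)>-M$ once the expansion order is taken large enough relative to $M$. For the inductive step, apply Fubini in the region of absolute convergence to isolate the $a_1$-integral,
\begin{equation*}
F_\phi(s)=\int_{(F^\times)^{n-2}}\tilde\phi(a_2,\ldots,a_{n-1};s)\prod_{j=2}^{n-1}\mu_j(a_j)\,d^\times a_j,
\end{equation*}
where $\tilde\phi(\cdot\,;s)$ denotes the one-variable Mellin transform of $\phi$ in $a_1$. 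The base case gives $\tilde\phi=H_1(s;\cdot)/Q_1(s)$ with $H_1$ holomorphic in $s$ for $\Re(s)>-M$, and the explicit Taylor-remainder construction further shows $H_1(s;\cdot)$ is Schwartz-Bruhat in $(a_2,\ldots,a_{n-1})$ with seminorms uniform for $s$ in compact subsets of the strip. Applying the inductive hypothesis to $H_1(s;\cdot)$ for each such $s$ yields $F_\phi(s)=H_\phi(s)/Q_M(s)$ in the asserted form (\ref{gaugeexpansion}), with the final $Q_M$ depending only on $M$ and the $\mu_j$'s.

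For the continuity bound (\ref{continuity}), the base case is immediate: the Taylor coefficients of $\phi$ at $0$ and the remainder are each controlled by a finite Schwartz-Bruhat seminorm, so $H_\phi(s)$ is bounded on compacta $C\subset\{\Re(s)>-M\}$ by such a seminorm. In the inductive step the estimate composes: the base-case bound, applied fiberwise in $(a_2,\ldots,a_{n-1})$, controls a Schwartz-Bruhat seminorm of $H_1(s;\cdot)$ by a seminorm of $\phi$ uniformly for $s\in C$, and the inductive bound then controls $H_\phi(s)$ by that seminorm of $H_1(s;\cdot)$. The main technical obstacle I anticipate is precisely this last point: verifying that $H_1(s;a_2,\ldots,a_{n-1})$ inherits the Schwartz-Bruhat property in the remaining variables with continuous, locally uniform dependence of seminorms on $s$. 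This reduces to checking that differentiation in $(a_2,\ldots,a_{n-1})$ commutes with the one-variable Taylor-extraction and that rapid decay in those parameters is preserved by both the polar terms and the remainder estimate, which is routine but requires care, especially for $F=\mathbb{C}$ where the Taylor expansion must be taken with respect to the underlying real coordinates.
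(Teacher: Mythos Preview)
Your approach is viable but differs from the paper's. Rather than inducting on the number of variables, the paper performs a direct multi-variable Taylor expansion: it partitions $(F^\times)^{n-1}$ into $2^{n-1}$ regions $I_T$ according to whether each $|a_j|<1$ or $|a_j|\ge 1$, and on each region expands $\phi$ to order $N$ (with integral remainder) simultaneously in all the variables $a_j$ with $j\in T$. The pure polynomial terms then integrate explicitly over $\{|a_j|<1\}$ to produce the factors $(s+\lambda_j+l_j)^{-n_j}$ that make up $Q_M(s)$, while the remainder integrals and the integrals over $\{|a_j|\ge 1\}$ converge absolutely for $\Re(s)>-M$ once $N$ is large enough; the seminorm $p_C$ is read off directly from sup-norms of the shape $\sup_{\mathbf a}\prod_{j\notin T}|a_j|^L\,|D^{\alpha}\phi(\mathbf a)|$. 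Your inductive route is conceptually cleaner but conceals a subtlety beyond the one you flag: applying the inductive hypothesis to the $s$-dependent Schwartz function $H_1(s;\cdot)$ produces a meromorphic function of a \emph{second} variable $s'$, and taking the diagonal $s'=s$ requires knowing that $s\mapsto H_1(s;\cdot)$ is holomorphic as a Schwartz-space-valued map, so that composition with the continuous linear functional $\psi\mapsto H'_\psi(s')$ preserves holomorphy. This is true and can be extracted from the explicit one-variable formulas, but the paper's simultaneous expansion sidesteps the issue entirely, since no intermediate object carries $s$-dependence through a Fr\'echet-space parameter and holomorphy of $H_\phi$ follows immediately from absolute convergence of an explicit integral.
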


Remark: The continuity established in (\ref{continuity}) implies that, in our case, as $k\rightarrow k_0$ in $K_r$, then $\phi(\cdot,k)\rightarrow \phi(\cdot, k_0)$ in the space of Schwartz-Bruhat functions on $F^{n-1}$, hence $H_{\phi(\cdot,k)}(s)\rightarrow H_{\phi(\cdot,k_0)}(s)$ uniformly on compact sets.

\begin{proof}
That the integral on the right hand side of (\ref{genericgauge}) converges absolutely for $\Re(s)\gg 0$ follows from the fact that $\phi$ is a Schwartz-Bruhat function, and so decays rapidly and the fact that if $\Re(s)>-\Re(\lambda_j)$ for all $j$, the integral $$\int_{|a_j|<1}|a_j|^{\Re(s+\lambda_j)}(\log |a_j|)^{n_j} d^\times a_j$$ converges absolutely for all $j$.

Given a real number $M$, fix a positive integer $N$ such that $\min_j(\Re(\lambda_j))+(N+1)>M$.

For the rest of the proof, we introduce some notation. 

Let $$\mathbf{a}=(a_1,a_2,\ldots a_{n-1})\in (F^\times)^{n-1}.$$

For a multi-index $\alpha=(l_1,l_2,\ldots,l_{n-1})$ with the $l_j$  nonnegative integers, we will use the following standard notation. Let $$\alpha!=l_1!l_2!\cdots l_{n-1}!$$

For a smooth function $f:F^{n-1}\rightarrow \mathbb{C}$, set $$D^\alpha (f)=\frac {\partial^{l_{1}+l_{2}+\cdots+l_{{n-1}}} f}{\partial a_{1}^{l_{1}}\partial a_{2}^{l_{2}}\cdots \partial a_{{n-1}}^{l_{{n-1}}}}.$$

For a vector $\mathbf{x}=(x_1,x_2,\ldots, x_{n-1})\in F^{n-1}$, we set $$\mathbf{x}^\alpha=x_1^{l_1}x_2^{l_2}\cdots x_{n-1}^{l_{n-1}}.$$

We will also use the notation $${\left.f(\mathbf{a})\right|}_{a_j=x}=f(a_1,a_2,\ldots, a_{i-1}, x, a_{i+1},\ldots, a_{n-1}).$$ In some cases we will use this with $x$ a constant to denote fixing the variable $a_j$ in the function $f$, and letting the other variables continue to vary. In other cases we will use this to denote a change in variables, i.e., $\left.f(\mathbf{a})\right|_{a_j=a_j t}$ denotes replacing the variable $a_j$ with $a_j t$ in $f$.

Set $$\mu(\mathbf{a})=\mu_1(a_1)\mu_2(a_2)\cdots \mu_{n-1}(a_{n-1})$$
and $$d^\times \mathbf{a}=d^\times a_1 \,d^\times a_2\cdots d^\times a_{n-1}.$$

Let $S_{n-1}=\{1,2,\ldots, n-1\}.$ For each subset $T\subset S_{n-1}$ and each $j\in S_{n-1}$, define a subset $I_{j,T}$ of $F$ as follows: $$I_{j,T}=\left\{\begin{array}{ll} \{a:|a|<1\}& \textrm{if }j\in T\\ \{a:|a|\geq 1\}& \textrm{if }j\notin T.\end{array}\right.$$

For each subset $T$ of $S_{n-1}$, let $$I_T=I_{1,T}\times I_{2,T}\times \cdots \times I_{n-1,T}$$ which is a subset of $F^{n-1}$.

Note that $F^{n-1}$ is a disjoint union of the $I_T$ as $T$ ranges over all the subsets (including the empty set) of $S_{n-1}$. Thus, we can write $F_\phi(s)$ as a finite sum $$\sum_{T\subset S_{n-1}} \int_{I_{T}} \phi(\mathbf{a})\mu(\mathbf{a})d^\times \mathbf{a}.$$

Now fix a subset $T\subset S_{n-1}$.

Assume, for now, $F=\mathbb{R}$.

We expand $\phi$ using the Taylor expansion along with the integral form of the remainder \cite[Theorem 8.14]{fitz} for each variable $a_j$ with $j\in T$.

If $j\in T$, we may write \begin{equation}\phi(\mathbf{a})=\sum_{l_j=0}^N \frac 1{{l_j}!} a_j^{l_j}{\left.\frac{\partial^{l_j}\phi}{\partial a_j^{l_j}}(\mathbf{a})\right|}_{a_j=0}+ \frac {1}{N!} a_j^{N+1}\int_0^{1} {\left.\frac {\partial^{N+1} \phi}{\partial a_j^{N+1}}(\mathbf{a})\right|}_{a_j=a_j t_j}(1-t_j)^N dt_j.\label{onevariableexpansion}\end{equation}

Note that each of the partial derivatives appearing in the above expansion is again a Schwartz-Bruhat function in $n-2$ variables or, in the case of the last term, for $t$ fixed, of $n-1$ variables.
Thus, if $j'\neq j$ is also in $T$, we can expand each of the functions above in the variable $a_{j'}$, and we get 
\begin{IEEEeqnarray}{rCl}\phi(\mathbf{a})&=&\sum_{l_{j'}=0}^N \sum_{l_j=0}^N \frac 1{l_{j'}!{l_j}!}a_{j'}^{l_{j'}}a_j^{l_j}{\left.\frac{\partial^{l_{j'}}\partial^{l_j}\phi}{\partial a_{j'}^{l_{j'}}\partial a_j^{l_j}}(\mathbf{a})\right|}_{\substack{a_{j'}=0\\a_j=0}}\nonumber \\
&&+ \sum_{l_{j'}=0}^N \frac {1}{l_{j'}!N!} a_{j'}^{l_{j'}}a_j^{N+1}\int_0^{1} {\left.\frac {\partial^{l_{j'}}\partial^{N+1} \phi}{\partial a_{j'}^{l_{j'}}\partial a_j^{N+1}}(\mathbf{a})\right|}_{\substack{a_{j'}=0\\a_j=a_j t_j}}(1-t_j)^N dt_j\nonumber\\
&&+\sum_{l_j=0}^N \frac {1}{N!l_j!} a_{j'}^{N+1}a_j^{l_j}\int_0^{1} {\left.\frac {\partial^{N+1}\partial^{l_j} \phi}{\partial a_{j'}^{N+1}\partial a_j^{l_j}}(\mathbf{a})\right|}_{\substack{a_{j'}=a_{j'} t_{j'}\\a_j=0}}(1-t_{j'})^N dt_{j'}\nonumber \\
&&+\frac {1}{(N!)^2} a_{j'}^{N+1}a_j^{N+1}\int_0^{1}\int_0^1 {\left.\frac {\partial^{N+1}\partial^{N+1} \phi}{\partial a_{j'}^{N+1}\partial a_j^{N+1}}(\mathbf{a})\right|}_{\substack{a_{j'}=a_{j'} t_{j'}\\a_j=a_jt_j}}(1-t_{j'})^N(1-t_j)^N dt_j\, dt_{j'}.\nonumber \end{IEEEeqnarray}

Now, we continue this process for each variable $a_j$ with $j\in T$. We see that $\phi$ can be written as a finite sum as follows.

Let $T'$ denote any subset of $T$ (possibly the empty set), and $\alpha_{T,T'}$ denote a multi-index of the form $(l_1,l_2,\ldots, l_{n-1})$ with the $l_j$ nonnegative integers and $l_j=0$ if $j\notin T$, $0\leq l_j\leq N+1$ if $j\in T$ with the further requirement that $l_j=N+1$ if $j\in T'$.

We see that $\phi$ can be written as a finite sum (as $T'$ ranging over subsets of $T$) of functions of the form
\begin{equation}
\frac{(N+1)^{\# T'}}{\alpha_{T,T'}!}\mathbf{a}^{\alpha_{T,T'}}\int_{[0,1]^{\# T'}} D^{\alpha_{T,T'}}
\phi(\mathbf{u}_{T,T'}(\mathbf{a}))\prod_{j\in T'} (1-t_j)^N\prod_{j\in T'}dt_j
\label{expanded}\end{equation}
where $\mathbf{u}_{T,T'}(\mathbf{a})=(u_1,\ldots, u_{n-1})$ with
$$u_j=\left\{\begin{array}{ll}a_j& \textrm{if }j\notin T\\0& \textrm{if }j\in T\backslash T'\\ a_jt_j & \textrm{if }j\in T'.\end{array}\right.$$

Now consider each of the inegrals $$\int_{I_{T}} \phi(\mathbf{a})\mu(\mathbf{a})d^\times \mathbf{a}.$$

We replace $\phi$ with a sum of expressions of the form (\ref{expanded}) and get $F_\phi(s)$ as a sum of integrals of the form
$$\int_{I_{T}}\frac{(N+1)^{\# T'}}{\alpha_{T,T'}!}\mathbf{a}^{\alpha_{T,T'}}\int_{[0,1]^{\# T'}} D^{\alpha_{T,T'}}\phi(\mathbf{u}_{T,T'}(\mathbf{a}))\prod_{j\in T'} (1-t_j)^N\prod_{j\in T'}dt_j \,\mu(\mathbf{a})d^\times \mathbf{a}.$$
Note that, if $j\in T\backslash T'$, we are integrating the variable $a_j$ over the set $|a_j|<1$ and the expression $D^{\alpha_{T,T'}}\phi(\mathbf{u}_{T,T'})(\mathbf{a}))$ is constant with respect to $a_j$. 
Therefore, the contribution to the integral of the variable $a_j$ (for $j\in T\backslash T'$) can be factored out as $$\int_{|a_j|<1}a_j^{l_j}\mu_j(a_j)d^\times a_j=\left\{\begin{array}{ll}\frac {-2n_j!}{(s+\lambda_j+l_j)^{n_j}} &\textrm{if }m_j\textrm{ and }l_j\textrm{ have the same parity}\\ 0 &\textrm{otherwise,}\end{array}\right.$$ 
for $\Re(s)\gg0$.

Thus, for $\Re(s)\gg0$, $F_\phi(s)$ is a sum of expressions of the form 
\begin{IEEEeqnarray}{l}\frac{(N+1)^{\# T'}}{\alpha_{T,T'}!}\left(\prod_{j\in T\backslash T'} \frac{-2n_j!}{(s+\lambda_j+l_j)^{n_j}}\right)
 \int \prod_{j\in T'}(a_j^{N+1})\int_{[0,1]^{\# T'}} D^{\alpha_{T,T'}}\phi(\mathbf{u}_{T,T'}(\mathbf{a}))\nonumber \\
\times\prod_{j\in T'} (1-t_j)^N\prod_{j\in T'}dt_j \,\prod_{j\notin T\backslash T'}\mu_j(a_j)\prod_{j\notin T\backslash T'}d^\times a_j\label{hphi}\end{IEEEeqnarray}
where such a term only appears if $\alpha_{T,T'}$ is such that $l_j+m_j$ is even for each $j$.
Here, the integral in $a_j$ is taken over $|a_j|<1$ if $j\in T'$, and over $|a_j|\geq 1$ if $j\notin T$.

Set 
\begin{IEEEeqnarray}{rCl}H_{\phi,T,T'}(s)&=&\frac{(N+1)^{\# T'}}{\alpha_{T,T'}!}\int \prod_{j\in T'}(a_j^{N+1})\int_{[0,1]^{\# T'}} D^{\alpha_{T,T'}}\phi(\mathbf{u}_{T,T'}(\mathbf{a}))\nonumber \\ 
&& \times\prod_{j\in T'} (1-t_j)^N\prod_{j\in T'}dt_j \,\prod_{j\notin T\backslash T'}\mu_j(a_j)\prod_{j\notin T\backslash T'}d^\times a_j\label{hphireal}\end{IEEEeqnarray}

Now for any integer $L$, we define a seminorm $p_{N,L}$ on the space of Schwartz-Bruhat functions on $F^{n-1}$ 
as follows: for each multi-index $\alpha_{T,T'}$ as described above (there are only finitely many possibilities) set  $$p_{N,L,\alpha_{T,T'}}(\phi)=\max_{\mathbf{a}\in F^{n-1}}{\left( 
\left(\prod_{j\notin T}|a_j|^L\right)\left|D^\alpha_{T,T'}\phi(a_1,\ldots,a_{n-1})\right|
\right)}.$$
This is well-defined since $\phi$ and all of its partial derivatives are Schwartz-Bruhat functions.
Set $$p_{N,L,T,T'}(\phi)=\max_{\alpha_{T,T'}}(p_{N,L,\alpha_{T,T'}}(\phi)).$$
Finally, set $$p_{N,L}(\phi)= \max_{\substack{T,T'\\ T'\subset T\subset S_{n-1}}} (p_{N,L,T,T'}(\phi)).$$
Then $p_{N,L,\alpha_{T,T'}}$, $p_{N,L,T,T'}$ and $p_{N,L}$ all define continuous seminorms on the space of Schwartz-Bruhat functions.

Thus, for each $T\subset S_{n-1}$, $T'\subset T$ and $\alpha_{T,T'}$, we have $$\prod_{j \notin T} |a_j|^L\left|D^{\alpha_{T,T'}}\phi(\mathbf{a})\right|\leq p_{N,L}(\phi)$$
or $$\left|D^{\alpha_{T,T'}}\phi(a_1,\ldots, a_{n-1})\right|\leq \frac{p_{N,L}(\phi)}{\prod_{j\notin T}|a_j|^L}$$ for all $(a_1,\ldots,a_{n-1})$.
In particular $$\left|D^{\alpha_{T,T'}}\phi(\mathbf{u}_{T,T'}(\mathbf{a}))\right|\leq \frac{p_{N,L}(\phi)}{\prod_{j\notin T}|a_j|^L}$$ for all $\mathbf{a}\in F^{n-1}$ and $t_j\in [0,1]$ for all $j\in T'$.

Therefore the absolute value of the integral (\ref{hphireal}) is majorized by 
\begin{IEEEeqnarray}{Cll}&p_{N,L}(\phi)(N+1)^{\# T'}\int & \prod_{j\notin T} \left(|a_j|^{-L}\right)\prod_{j\in T'}\left(|a_j|^{N+1}\int_{[0,1]}  (1-t_j)^N dt_j\right)\prod_{j\notin T\backslash T'}|\mu_j(a_j)|\prod_{j\notin T\backslash T'}d^\times a_j \nonumber \\
=&\IEEEeqnarraymulticol{2}{l}{p_{N,L}(\phi)\int \left(|a_j|^{-L}\right)\prod_{j\in T'}\left(|a_j|^{N+1}\right) \prod_{j\notin T\backslash T'}|\mu_j(a_j)|\prod_{j\notin T\backslash T'}d^\times a_j.}\nonumber \end{IEEEeqnarray}

This is a constant times a product of integrals, each in a single variable $a_j$, with $j\notin T\backslash T'$. If $j\notin T$, we have the integral $$\int_{|a_j|\geq 1} |a_j|^{-L}|\mu_j(a_j)| d^\times a_j$$ which converges absolutely so long as $L>\Re(s+\lambda_j)$ or $\Re(s)<L-\Re(\lambda_j)$. Since $L$ is arbitrary, we can choose $L$ so that these integrals converge absolutely for all $s$ with $\Re(s)>-M$. 

On the other hand, if $j\in T'$, the integral is $$\int_{|a_j|<1} |a_j|^{N+1}|\mu_j(a_j)|d^\times a_j$$ which converges absolutely for $\Re(s)>-\Re(\lambda_j)-{N+1}$

Thus (\ref{hphireal}) converges absolutely for $\Re(s)>-\min_j(\Re(\lambda_j))-(N+1)$, and so $H_{\phi,T,T'}(s)$ defines a holomorphic function in $s$ for $\Re(s)>-M$ (recall that we chose $N$ so that $\min_j(\Re(\lambda_j))+(N+1)>M$).

We have shown that $F_\phi(s)$ is a sum of functions of the form \begin{equation}\frac{1}{\alpha_{T,T'}!}\left(\prod_{j\in T\backslash T'} \frac{-2n_j!}{(s+\lambda_j+l_j)^{n_j}}\right)H_{\phi,T,T'}(s)\label{meroreal}\end{equation} where $\alpha_{T,T'}$ is such that the $l_j$'s are integers with $0\leq l_j\leq N+1$ and $l_j+m_j$ is even for all $j$, and $H_{\phi,T,T'}(s)$ is holomorphic in $s$ for $\Re(s)>-M$. Set $Q_M(s)$ to be the least common denominator of the terms $\frac{1}{(s+\lambda_j+l_j)^{n_j}}$ in this sum, and we get $$F_\phi(s)=\frac{H_\phi(s)}{Q_M(s)},$$ as desired, where $H_\phi(s)$ is the sum $$\sum_{T'\subset T\subset S_{n-1},\,\alpha_{T,T'}'} Q_M(s)\left(\prod_{j\in T}\frac{1}{\alpha_{T,T'}'!}\prod_{j\in T\backslash T'} \frac{-2n_j!}{(s+\lambda_j+l_j)^{n_j}}\right)H_{\phi,T,T'}(s)$$ where the sum is taken as $T$ ranges over all the subsets of $S_{n-1}$ and $T'$ ranges over all the subsets of $T$, and all $\alpha_{T,T'}'$ all multi-indices with $l_j=0$ if $j\notin T$, $l_j=N+1$ if $t\in T'$ and $0\leq l_j \leq N+1$ with $l_j+m_j$ even for all $j$.
$H_\phi(s)$ is then meromorphic for $\Re(s)>-M$.

Since $M$ is arbitrary, this gives a meromorphic continuation of (\ref{genericgauge}) to the entire complex plane.

To establish the desired continuity, we take any compact set $C$ in the right half plane $\Re(s)>-M$, and we choose $L$ above so that $C$ is contained in the strip $-M<\Re(s)<L-\max_j(\Re(\lambda_j))$. Then $|H_\phi(s)|$ is majorized by the finite sum 
\begin{IEEEeqnarray}{Cll}&\IEEEeqnarraymulticol{2}{l}{\sum_{\substack{T'\subset T\subset S_{n-1}\\ \alpha_{T,T'}'}}\left| Q_M(s)\left(\prod_{j\in T}\frac{1}{l_j!}\prod_{j\in T\backslash T'} \frac{-2n_j!}{(s+\lambda_j+l_j)^{n_j}}\right)\right||H_{\phi,T,T'}(s)|}\nonumber \\
\leq & p_{N,L}(\phi) \sum_{\substack{T'\subset T\subset S_{n-1}\\ \alpha_{T,T'}'}}
& \left| Q_M(s)\left(\prod_{j\in T}\frac{1}{l_j!}\prod_{j\in T\backslash T'} \frac{-2n_j!}{(s+\lambda_j+l_j)^{n_j}}\right)\right|\nonumber \\
&&\times \int \left(|a_j|^{-L}\right)\prod_{j\in T'}\left(|a_j|^{N+1}\right) \prod_{j\notin T\backslash T'}|\mu_j(a_j)|\prod_{j\notin T\backslash T'}d^\times a_j.\nonumber \end{IEEEeqnarray}

The expression in the sum defines a continuous function in $s$ for $-M<\Re(s)<L-\max_j(\Re(\lambda_j))$, and so has a maximum value for $s\in C$. Take $p_C$ to be $p_{N,L}$ multiplied by this maximum value and we have the desired result.

For the complex case (i.e. $F=\mathbb{C}$), in place of (\ref{onevariableexpansion}),
we use the Taylor Expansion of $\phi$ in each variable with respect to the Wirtinger Derivatives, which can be obtained using standard results from calculus of two real variables. See Appendix A.
\begin{IEEEeqnarray}{rCl}\phi(\mathbf{a})&=&\sum_{l_j=0}^N\sum_{k_j=0}^{l_j} \frac {1}{k_j!(l_j-k_j)!}a_j^{k_j}\overline{a_j}^{l_j-k_j}{\left. \frac {\partial^l_j \phi}{\partial a_j^{k_j} \partial \overline{a_j}^{l_j-k_j}}(\mathbf{a})\right|}_{a_j=0}\nonumber \\
&&+\sum_{k_j=0}^{N+1}\frac {N+1}{k_j!(N+1-k_j)!}a_j^{k_j}\overline{a_j}^{N+1-k_j}\int_0^1 {\left.\frac {\partial ^{N+1}\phi}{\partial a_j^{k_j}\partial \overline{a_j}^{N+1-k_j}}(\mathbf{a})\right|}_{a_j=a_jt}(1-t)^Ndt.\IEEEeqnarraynumspace\label{onevariablecomplex}\end{IEEEeqnarray}

We now proceed in the same fashion as in the real case, expanding $\phi$ in the same way in each variable $a_j$ with $j\in T$.

We adopt the convention that, given two multi-indices $\beta=(b_1,b_2,\ldots,b_{n-1})$ and $\gamma=(c_1,c_2,\ldots, c_{n-1})$ then $$\beta+\gamma=(b_1+c_1,b_2+c_2,\ldots, b_{n-1}+c_{n-1}).$$

Given a pair of multi-indices $\beta$ and $\gamma$ as above, and a smooth function $f$ on $n-1$ complex variables, we write  $$D^{\beta,\gamma} (f)=\frac {\partial^{b_{1}+b_{2}+\cdots+b_{n-1}+c_1+c_2+\cdots+c_{n-1}} f}{\partial a_{1}^{b_{1}}\partial a_{2}^{b_{2}}\cdots \partial a_{{n-1}}^{b_{{n-1}}}\partial {\overline{a_{1}}}^{c_{1}}\partial {\overline{a_{2}}}^{c_{2}}\cdots \partial {\overline{a_{{n-1}}}}^{c_{{n-1}}}}$$

Instead of (\ref{expanded}), we see $\phi$ can be written as finite sum of functions of the form 
\begin{equation}
\sum_{\beta+\gamma=\alpha_{T,T'}}\frac{(N+1)^{\# T'}}{\beta!\gamma!}\mathbf{a}^{\beta}\overline{\mathbf{a}}^\gamma\int_{[0,1]^{\# T'}} D^{\beta,\gamma}\phi(\mathbf{u}_{T,T'}(\mathbf{a}))\prod_{j\in T'} (1-t_j)^N\prod_{j\in T'}dt_j
\label{expandedcomplex}\end{equation}
where $\mathbf{u}_{T,T'}(\mathbf{a})=(u_1,\ldots, u_{n-1})$ with
$$u_j=\left\{\begin{array}{ll}a_j& \textrm{if }j\notin T\\0& \textrm{if }j\in T\backslash T'\\ a_jt_j & \textrm{if }j\in T'.\end{array}\right.$$
The integral is to be computed as an iterated line integral with $t_j$ real.

Thus $F_\phi(s)$ can be written as a finite sum (over $T$, $T'\subset T$ and $\alpha_{T,T'}$) of expressions of the form 
$$\sum_{\beta+\gamma=\alpha_{T,T'}} \int_{I_{T}}\frac{(N+1)^{\# T'}}{\beta!\gamma!}a^\beta \overline{a}^\gamma \int_{[0,1]^{\# T'}} D^{\beta,\gamma}\phi(\mathbf{u}_{T,T'}(\mathbf{a}))\prod_{j\in T'} (1-t_j)^N\prod_{j\in T'}dt_j \,\mu(\mathbf{a})d^\times \mathbf{a}.$$

As in the real case, for those $j\in T\backslash T'$, the expression $D^{\alpha, \beta}\phi(\mathbf{u}_{T,T'})(\mathbf{a}))$ is constant with respect to $a_j$ 
and the contribution to the integral of the variable $a_j$ (for $j\in T\backslash T'$) can be factored out as a factor of $$\int_{|a_j|<1}a_j^{b_j}\overline{a_j}^{c_j}\mu_j(a_j)d^\times a_j.$$

Converting to polar coordinates $a_j=r_je^{i\theta_j}$ and using the definition of $m_j$ and $\chi_0$, we see that this integral becomes $$\frac{1}{2\pi}\int_0^1 r^{2s+2\lambda_j+b_j+c_j} (\log r)^{n_j}d^\times r \int_0^{2\pi} e^{i(m_j+b_j-c_j)\theta} d\theta$$ which is 0 unless $m_j+b_j-c_j=0$. If this is the case, then it becomes $$\frac {-n_j!}{(2s+2\lambda_j+b_j+c_j)^{n_j}}.$$

Thus, for $\Re(s)\gg0$, $F_\phi(s)$ is a sum of expressions of the form 
\begin{IEEEeqnarray}{l}\frac{(N+1)^{\# T'}}{\beta!\gamma!}\left(\prod_{j\in T\backslash T'} \frac{-n_j!}{(2s+2\lambda_j+b_j+c_j)^{n_j}}\right) \int \prod_{j\in T'}a_j^{b_j}\overline{a_j}^{c_j}\nonumber\\
\times \int_{[0,1]^{\# T'}} D^{\beta, \gamma}\phi(\mathbf{u}_{T,T'}(\mathbf{a}))\prod_{j\in T'} (1-t_j)^N\prod_{j\in T'}dt_j \,\prod_{j\notin T\backslash T'}\mu_j(a_j)\prod_{j\notin T\backslash T'}d^\times a_j\label{hphicomplex}\end{IEEEeqnarray}
where such a term only appears if $\beta+\gamma=\alpha_{T,T'}$ with $b_j-c_j=m_j$ for each $j\in T\backslash T'$. Note that $\beta+\gamma=\alpha_{T,T'}$ ensures that $b_j+c_j=N+1$, so that $|a_j|^{b_j}|\overline{a_j}|^{c_j}=|a_j|^{N+1}$.

Furthermore, the functions $D^{\beta, \gamma} \phi$ can be written in terms of the partial derivatives of $\phi$ with respect to $x_j$ and $y_j$, so they are again Schwartz-Bruhat functions.

From here, we may proceed as in the real case to obtain the desired results. Note that we need to take $N$ such that $\min_j(\Re(\lambda_j))+\frac{N+1}{2}>M$ in this case.

\end{proof}

\subsection{The Space $C_c^\infty(\theta_{m},GL_{m})$}

Let $m$ be any positive integer. For $F=\mathbb{R}$  or $\mathbb{C}$, there is another formulation for the set $C_c^\infty(\theta_m, GL_{m}).$

Consider the matrix $$g=\left[\begin{array}{c} v_1\\v_2\\\vdots \\v_m\end{array}\right]\in GL_m$$ with $v_i\in F^m$ as row vectors. Let $\langle\cdot,\cdot\rangle$ denote the usual inner product on $F^m$. We look at the result of the Gram-Schmidt process, starting with $v_m$ and working inductively up, $$GS(g)=\left[\begin{array}{c} w_1\\w_2\\\vdots \\ w_m\end{array}\right],$$ 
\begin{IEEEeqnarray}{rCl}w_m &=&v_m\nonumber \\ w_{m-1}& =&v_{m-1}-\frac{\langle v_{m-1},w_m\rangle}{\langle w_m,w_m\rangle} w_m\nonumber \\ &\vdots& \nonumber \\ w_1&=&v_1-\frac{\langle v_1,w_2\rangle}{\langle w_2,w_2\rangle}w_2-\cdots-\frac{\langle v_1,w_m\rangle}{\langle w_m,w_m\rangle}w_m.\nonumber\end{IEEEeqnarray}

The observant reader will notice that this is part of the development of the Iwasawa decomposition for $GL_m(F)$. For our purposes, it is useful to look at the construction directly, as we wish to make the following observations and definitions.

Note that the map $g\mapsto w_i$ is smooth for all $i$. Furthermore, note that this process is invariant under left translation by elements of $N_m$. That is $$GS(ug)=GS(g)$$ for all $u\in N_m$.

For $1\leq i\leq m-1$ define $\lambda_i:GL_m\rightarrow \mathbb{R}$ by $$\lambda_i(g)=\frac{\langle v_i,w_{i+1}\rangle}{\langle w_{i+1},w_{i+1}\rangle}.$$ Then $\lambda_i$ is smooth.

Furthermore, for $$u=\left(\begin{array}{ccc}\begin{array}{cc}1&u_{1,2}\\0&1\end{array}&\cdots&\begin{array}{cc} *&* \\ *&* \end{array}\\ \vdots&\ddots &\vdots\\ \begin{array}{cc}0&0\\0&0\end{array}& \cdots &\begin{array}{cc}1&u_{m-1,m}\\0&1\end{array}\end{array}\right)\in N_m,$$
$$\lambda_i(ug)=u_{i,i+1}+\lambda_i(g).$$

Define $\Theta_m:GL_m\rightarrow \mathbb{C}$ by $$\Theta_m(g)=\prod_{i=1}^{m-1}\psi(\lambda_i(g)).$$
Then $\Theta_m$ is smooth, $|\Theta_m(g)|=1$ for all $g$, and for $u\in N_m$, $$\Theta_m(ug)=\theta_m(u)\Theta_m(g)$$
for all $g\in GL_m$, and $$\Theta_m(u)=\theta_m(u).$$

Note that if $GL_m(F)=N_mA_mK_m$ is the Iwasawa decomposition of $GL_m$, then $\Theta_m(uak) = \theta_m(u)$ for $u\in N_m$, $a\in A_m$ and $k\in K_m$.

Furthermore, every $\phi\in C_c^\infty(\theta_m,GL_m)$ can be written as $$\phi(g)=\Theta_m(g)f([g])$$ where $$f\in C_c^\infty (N_m\backslash GL_m)$$ and $[g]$ denotes the class of $g$ in $N_m\backslash GL_m.$

Let $\sigma'$ denote the $(r-1) \times (r-1)$ matrix defined by the upper left block of the matrix $\sigma$, i.e. $$\sigma=\left(\begin{array}{cc}\sigma'&0\\0&1\end{array}\right).$$

We will require the following result.

\begin{lem}\label{existence} Let $r=2n$ be even and $\sigma'\in GL_{r-1}$ defined as above. Let $H$ denote the closed subgroup of $GL_{r-1}$ consisting of matrices of the form $$\sigma'\left(\begin{array}{ccc}g&0&Z'\\0&1&Y\\0&0&g\end{array}\right)\sigma'^{-1}$$ where $g\in GL_{n-1}$, $Z'\in M_{n-1}$ and $Y\in F^{n-1}$ as a row vector. Then the homogeneous space $$(N_{r-1}\cap H)\backslash H$$ is a closed regular submanifold of the homogeneous space $$N_{r-1}\backslash GL_{r-1}.$$ Hence every function in $C_c^\infty((N_{r-1}\cap H)\backslash H)$ can be realized as the restriction of a function in $C_c^\infty (N_{r-1}\backslash GL_{r-1})$. \end{lem}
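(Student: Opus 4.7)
The plan is to show that the natural map
\[
\iota\colon (N_{r-1}\cap H)\backslash H \longrightarrow N_{r-1}\backslash GL_{r-1},\qquad (N_{r-1}\cap H)h\mapsto N_{r-1}h,
\]
is a closed regular embedding. The ``hence'' clause will then follow from the standard fact that every $C_c^\infty$-function on a closed regular submanifold of a smooth manifold extends to a $C_c^\infty$-function on the ambient manifold, via a smooth cutoff supported in a tubular neighborhood.

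First I would check that $\iota$ is a smooth injective immersion. Both quotients are smooth manifolds because $H$, $N_{r-1}$, and $N_{r-1}\cap H$ are closed subgroups of $GL_{r-1}$. Injectivity is immediate: if $N_{r-1}h_1 = N_{r-1}h_2$ with $h_1,h_2\in H$, then $h_1 h_2^{-1}\in N_{r-1}\cap H$. Smoothness is inherited from the inclusion $H\hookrightarrow GL_{r-1}$. At the identity coset the differential of $\iota$ is the natural injection $\mathfrak{h}/(\mathfrak{h}\cap\mathfrak{n}_{r-1})\hookrightarrow \mathfrak{gl}_{r-1}/\mathfrak{n}_{r-1}$, and right $H$-equivariance of $\iota$ propagates injectivity of the differential to every point.

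The main content is to show that the image of $\iota$ is closed, equivalently, that $N_{r-1}H$ is closed in $GL_{r-1}$. Once this is in hand, the orbit theorem applied to the right $H$-action on $N_{r-1}\backslash GL_{r-1}$ (combined with the smooth injective immersion above) promotes $\iota$ to a closed embedding, since a homogeneous orbit is a regular submanifold precisely when it is locally closed. To prove $N_{r-1}H$ is closed, I would conjugate by $\sigma'^{-1}$ and work with $H' = \sigma'^{-1}H\sigma'$, which has the explicit block upper triangular form written in the statement, and $N' = \sigma'^{-1}N_{r-1}\sigma'$. The strategy is to describe $N'H'$ as the common zero set in $GL_{r-1}$ of an explicit finite collection of polynomial equations in the matrix entries. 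The constraints defining $H'$ inside its containing block-$(n-1,1,n-1)$ upper triangular parabolic $P'$, namely that the two diagonal $GL_{n-1}$-blocks agree, the middle $GL_1$-block equals $1$, and the $(1,2)$-block vanishes, translate into polynomial relations among certain left-$N'$-invariant coordinates on $GL_{r-1}$ (entries and minors of rows singled out by the permutation $\sigma'$). As a sanity check, in the case $n=2$ one computes directly that $N_3 H$ is exactly the closed locus $\{m_{21}=m_{31}=0,\ m_{33}=1,\ m_{22}-m_{11}-m_{23}m_{32}=0\}\cap GL_3$.

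The hard part will be carrying out this bookkeeping in general: enumerating the correct $N'$-invariants, writing down the corresponding polynomial equations, and verifying that their common zero locus agrees exactly with $N'H'$ rather than some strictly larger variety. Once $N_{r-1}H$ is known to be closed, $\iota$ becomes a closed embedding, $(N_{r-1}\cap H)\backslash H$ sits inside $N_{r-1}\backslash GL_{r-1}$ as a closed regular submanifold, and the $C_c^\infty$-extension assertion follows immediately from the standard tubular-neighborhood construction.
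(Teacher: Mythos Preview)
Your outline is sound and would work, but the paper takes a different, more Lie-algebraic route that sidesteps the polynomial bookkeeping you flag as the hard part. Instead of cutting out $N_{r-1}H$ by explicit $N'$-invariant equations, the paper works entirely at the level of tangent spaces at the identity coset. Writing $L=\sigma'^{-1}H\sigma'$ and identifying $L'=\sigma'^{-1}(N_{r-1}\cap H)\sigma'$ explicitly as the subgroup of matrices $\left(\begin{smallmatrix} u & 0 & T\\ 0 & 1 & 0\\ 0 & 0 & u\end{smallmatrix}\right)$ with $u\in N_{n-1}$ and $T\in\mathfrak{p}_{0,n-1}$, one computes the quotient $\mathrm{Lie}(L)/\mathrm{Lie}(L')$ concretely as matrices $\left(\begin{smallmatrix} \mathfrak{n} & 0 & \mathfrak{z}\\ 0 & 0 & \mathfrak{y}\\ 0 & 0 & \mathfrak{n}\end{smallmatrix}\right)$ with $\mathfrak{n}$ lower triangular, $\mathfrak{z}$ strictly lower triangular, and $\mathfrak{y}\in F^{n-1}$. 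The crux is then a direct matrix check that $\mathrm{Ad}(\sigma')$ sends every such matrix to a \emph{lower triangular} element of $\mathfrak{gl}_{r-1}$, i.e., into the model for $T_e(N_{r-1}\backslash GL_{r-1})$. This one combinatorial verification with the permutation $\sigma'$ replaces your entire program of enumerating $N'$-invariants.

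What each approach buys: yours would make closedness of the image fully explicit and self-contained, at the cost of the bookkeeping you anticipate. The paper's argument is shorter and more conceptual, but it leaves the passage from ``tangent spaces embed correctly'' to ``closed regular submanifold'' to a citation of standard differential geometry, with the closedness of $N_{r-1}H$ not argued separately. If you want to strengthen your write-up without doing the full invariant-theoretic computation, the $\mathrm{Ad}(\sigma')$-is-lower-triangular observation is the key shortcut.
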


\begin{proof}

The natural injection defines $(N_{r-1}\cap H)\backslash H$ as a subset of $N_{r-1}\backslash GL_{r-1}$. We need to check that the differential structure endowed by $(N_{r-1}\cap H)\backslash H$ as a subspace of $N_{r-1}\backslash GL_{r-1}$ is the same as the original differential structure given to $(N_{r-1}\cap H)\backslash H$ as a quotient space of Lie groups. To do this, we examine the tangent spaces involved.

The tangent space, $T_e(N_{r-1}\backslash GL_{r-1})$, of $N_{r-1}\backslash GL_{r-1}$ can be realized as the vector space quotient of Lie Algebras $\frac{\textrm{Lie}(GL_{r-1})}{\textrm{Lie}(N_{r-1})}.$ But $\textrm{Lie}(GL_{r-1})$ is simply $M_{r-1}$, while $\textrm{Lie}(N_{r-1})$ is the subspace of $M_{r-1}$ consisting of upper triangular matrices with zeros along the diagonal. Thus the tangent space $T_e(N_{r-1}\backslash GL_{r-1})$ can be realized as the vector space consisting of $(r-1) \times (r-1)$ lower triangular matrices.

On the other hand $T_e((N_{r-1}\cap H)\backslash H)$ can also be realized as the vector space quotient of Lie Algebras $\frac {\textrm{Lie}(H)}{\textrm{Lie}(N_{r-1}\cap H)}.$ 

Let $L$ denote the group of matrices of the form $\left(\begin{array}{ccc}g&0&Z'\\0&1&Y\\0&0&g\end{array}\right).$ Then $\textrm{Lie}(L)$ consists of $(r-1) \times (r-1)$ matrices of the form $\left[\begin{array}{ccc} x&0&z\\0&0&y\\0&0&x\end{array}\right]$ where $x$ and $z$ are in $M_{n-1}$ and $y\in F^{n-1}$.  

Let $L'$ denote the subgroup of $L$ consisting of matrices of the form $\left(\begin{array}{ccc}u&0&T\\0&1&0\\0&0&u\end{array}\right)$ where $u\in N_{n-1}$ and $T\in \mathfrak{p}_{0,n-1}$ (i.e. $T$ is an upper triangular matrix). Then $\textrm{Lie}(L')$ consists of $(r-1) \times (r-1)$ matrices of the form  $\left[\begin{array}{ccc} \mathfrak{u}&0&\mathfrak{t}\\0&0&0\\0&0&\mathfrak{u}\end{array}\right]$ where $\mathfrak{u}$ is an upper triangular matrix with zeros along the diagonal, and $\mathfrak{t}$ is an upper triangular matrix.

Thus, the tangent space $T_e(L'\backslash L)$ is the vector space quotient $\frac {\textrm{Lie}(L)}{\textrm{Lie}(L')},$ which consists of those $(r-1) \times (r-1)$ matrices of the form $\left[\begin{array}{ccc} \mathfrak{n}&0&\mathfrak{z}\\0&0&\mathfrak{y}\\0&0&\mathfrak{n}\end{array}\right]$ where $\mathfrak{n}$ is a lower triangular matrix in $M_{n-1}$ and $\mathfrak{z}$ is a lower triangular matrix with zeros along the diagonal, and $\mathfrak{y}\in F^{n-1}$.

Now, we notice that $\sigma'L\sigma'^{-1}=H$ and $\sigma'L'\sigma'^{-1}=N_{r-1}\cap H.$ Thus $$T_e((N_{r-1}\cap H)\backslash H)=
\textrm{Ad}(\sigma')\frac {\textrm{Lie}(L)}{\textrm{Lie}(L')},$$ which consists of matrices of the form $$\textrm{Ad}(\sigma')\left[\begin{array}{ccc} \mathfrak{n}&0&\mathfrak{z}\\0&0&\mathfrak{y}\\0&0&\mathfrak{n}\end{array}\right]$$ with $\mathfrak{n}$ a lower triangular matrix, $\mathfrak{z}$ a lower triangular matrix with zeros along the diagonal, and $\mathfrak{y}\in F^{n-1}$. A straightforward computation reveals that matrices of this form are in fact lower triangular matrices. 

Thus we have a natural way of imbedding $T_e((N_{r-1}\cap H)\backslash H)$ into $T_e(N_{r-1}\backslash GL_{r-1})$ which shows that $(N_{r-1}\cap H)\backslash H$ has the desired closed regular submanifold structure.

The final statement then follows from well-known results from Differential Geometry (see, for example, chapter 5 of \cite{boothby}).

\end{proof}

\subsection{Proof of Lemma \ref{J1} in the Archimedean Case}

We will now prove Lemma \ref{J1} in the archimedean case.

\begin{proof}[Proof of Lemma \ref{J1} with $F=\mathbb{R}$ or $\mathbb{C}$.]

It is convenient here to replace $W$ in (\ref{j1}) with $\rho(\sigma^{-1})W$, where $\rho$ is the right regular representation, so that (\ref{j1}) becomes
\begin{multline}J_1(s,h,\chi,\rho(\sigma^{-1})W)=\chi(\det h)|\det h|^{s-1}\\
\times \int_{N_{n-1}\backslash G_{n-1}}\int_{\mathfrak{p}_{0,n}\backslash M_n}W\left[\sigma\left(\begin{array}{cc}1_n&Z\\0&1_n\end{array}\right)\left(\begin{array}{cccc} g &0&0&0\\0&1&0&0\\0&0&g&0\\0&0&0&1\end{array}\right)\left(\begin{array}{cc} h &0\\0&h\end{array}\right)\sigma^{-1}\right]\\
 \times\psi(-\textrm{Tr}(Z))dZ\,\chi(\det g)|\det g|^{s-1}dg\label{j1s}\end{multline}

We decompose $M_n(F)$ as $M_{n-1}(F)\oplus F^{n-1} \oplus F^{n-1}\oplus F$, writing $Z=\left(\begin{array}{cc} Z'&u\\Y& x\end{array}\right)$
with $Z'\in M_{n-1}$, $Y\in F^{n-1}$ as a row vector, $u\in F^{n-1}$ as a column vector and $x\in F$. Under this decomposition, the subspace $\mathfrak{p}_{0,n}$ decomposes as $\mathfrak{p}_{0,n-1}\oplus \{0\}\oplus F^{n-1} \oplus F$. Thus, the quotient space $\mathfrak{p}_{0,n}\backslash M_n$ decomposes as $\mathfrak{p}_{0,n-1}\backslash M_{n-1}\oplus F^{n-1}\oplus \{0\} \oplus \{0\}$. So, we write $Z=\left(\begin{array}{cc} Z'&0\\Y&0\end{array}\right)$ and as $Z$ ranges over $\mathfrak{p}_{0,n}\backslash M_n$, $Z'$ ranges over $\mathfrak{p}_{0,n-1}\backslash M_{n-1}$ and $Y$ ranges over $F^{n-1}$.

Also write $\sigma=\left(\begin{array}{cc}\sigma'&0\\0&1\end{array}\right)$ with $\sigma'\in GL_{r-1}$.

Thus, by Proposition \ref{smoothvector}, for $\phi_0\in C_c^\infty(\theta_{r-1},GL_{r-1})$, there exists a $W_{\phi_0}\in \mathcal{W}(\pi,\psi)$ such that 
$$W_{\phi_0}\left[\left(\begin{array}{cc}g&0\\0&1\end{array}\right)\right]=\phi_0(g).$$ 

Notice that $$\sigma\left(\begin{array}{cc}1_n&Z\\0&1_n\end{array}\right)\left(\begin{array}{cccc} g &0&0&0\\0&1&0&0\\0&0&g&0\\0&0&0&1\end{array}\right)\sigma^{-1}
=\left(\begin{array}{cc}\sigma'\left(\begin{array}{ccc}g&0&Z'g\\0&1&Yg\\0&0&g\end{array}\right)\sigma'^{-1} &0\\0&1\end{array}\right)$$
which is in $GL_{r-1}$ (as a subgroup of $GL_{r}$).

Taking $W=W_{\phi_0}$ and $h=1_n$ in (\ref{j1s}), we have
\begin{IEEEeqnarray}{rCl}J_1(s,1_n,\chi,\rho(\sigma^{-1})W_{\phi_0})& =&\int_{N_{n-1}\backslash G_{n-1}}\int_{F^{n-1}}\int_{\mathfrak{p}_{0,n-1}\backslash M_{n-1}}\phi_0\left[\sigma'\left(\begin{array}{ccc}g&0&Z'g\\0&1&Yg\\0&0&g\end{array}\right)\sigma'^{-1}\right]\nonumber \\
& &\times\psi(-\textrm{Tr}(Z'))dZ'\,dY\,
\chi(\det g)|\det g|^{s-1}dg.\label{ourcase}\end{IEEEeqnarray}

Let $H$ be the closed subgroup of $GL_{r-1}$ defined in Lemma \ref{existence}. Note that $H$ is the image of the continuous injection $$(g,Z',Y)\mapsto\sigma'\left(\begin{array}{ccc}g&0&Z'g\\0&1&Yg\\0&0&g\end{array}\right)\sigma'^{-1}$$
from $GL_{n-1}\times M_{n-1}\times F^{n-1}$ into $GL_{r-1}$. Note that any $Z''\in M_{n-1}$ can be written as $Z'g$ for some $Z'\in M_{n-1}$. Similarly for $Y'\in F^{n-1}$.

Notice also that $N_{r-1}\cap H$ is the image of $N_{n-1}\times\mathfrak{p}_{0,n-1}\times \{0\}$. In this sense, $N_{r-1}\backslash H$ can be identified with $N_{n-1}\backslash GL_{n-1}\times \mathfrak{p}_{0,n-1}\backslash M_{n-1}\times F^{n-1}$, and the iterated integral is an integral over the space $(N_{r-1}\cap H)\backslash H$. By Lemma \ref{existence}, the homogeneous space $(N_{r-1}\cap H)\backslash H$ is a closed regular submanifold of $N_{r-1}\backslash GL_{r-1}$. Thus, if $\phi_0$ is of compact support modulo $N_{r-1}$, then ${\phi_0|}_H$ is of compact support modulo $N_{r-1}\cap H$. 
Thus, the integral (\ref{ourcase}) is absolutely convergent for all values of $s$. This proves the first part of the lemma for any $W$ of the form $\rho(\sigma^{-1})W_{\phi_0}$.

Now fix $s=s_0$.

For $h\in H$, $$h=\sigma'\left(\begin{array}{ccc}g&0&Z'g\\0&1&Yg\\0&0&g\end{array}\right)\sigma'^{-1},$$ set 
$$A(h)=\Theta_{r-1}(h)\psi(-\textrm{Tr}(Z'))
\chi(\det g)|\det g|^{s_0-1},$$ where $\Theta_{r-1}$ is as defined earlier in this section (taking $m=r-1$).

Recall that, if $u\in N_{r-1}\cap H$, $u$ has the form $$u=\sigma'\left(\begin{array}{ccc} u_0&0&Tu_0\\0&1&0\\0&0&u_0\end{array}\right)\sigma'{-1}$$  with $u_0\in N_{n-1}$ and $T$ an  upper triangular matrix and $\theta_{r-1}(u)=\psi(\textrm{Tr}(T))$.
Then, for $u\in N_{r-1}\cap H$ and $h\in H$, $$A(uh)=A(h).$$ Thus $A$ defines a smooth function on the homogeneous space $(N_{r-1}\cap H)\backslash H$.

Notice that $A(1_{r-1})=\Theta_{r-1}(1_{r-1})\psi(0)\chi(\det 1_{r-1})|\det 1_{r-1}|^{s_0-1}=1$.

In light of Proposition \ref{existence}, the restrictions of smooth functions on $N_{r-1}\backslash GL_{r-1}$ to $(N_{r-1}\cap H)\backslash H$ are arbitrary among smooth functions on $(N_{r-1}\cap H)\backslash H$.
Thus, since $A(1_{r-1})=1$, we may choose $f\in C^\infty (GL_{r-1})$, invariant under left multiplication by elements of $N_{r-1}$ and of compact support modulo that subgroup (i.e. $f(g)=f'([g])$ for $f'\in C_c^\infty (N_{r-1}\backslash GL_{r-1})$) with support concentrated (modulo $N_{r-1}$) near $1_{r-1}$
such that 
\begin{IEEEeqnarray}{rCl}\IEEEeqnarraymulticol{3}{l}{\int_{(N_{r-1}\cap H)\backslash H} f(h)A(h)dh} \nonumber \\&= &\int_{N_{n-1}\backslash G_{n-1}}\int_{F^{n-1}}\int_{\mathfrak{p}_{0,n-1}\backslash M_{n-1}}f\left(\sigma'\left(\begin{array}{ccc}g&0&Z'g\\0&1&Yg\\0&0&g\end{array}\right)\sigma'^{-1}\right)\nonumber \\
& & \times \Theta_{r-1}\left(\sigma'\left(\begin{array}{ccc}g&0&Z'g\\0&1&Yg\\0&0&g\end{array}\right)\sigma'^{-1}\right)\psi(-\textrm{Tr}(Z'))dZ'\,dY\,\chi(\det g)|\det g|^{s_0-1}dg\nonumber \end{IEEEeqnarray}
is non-zero.

Since $f\Theta_{r-1}$ defines an element in $C_c^\infty(\theta_{r-1},GL_{r-1})$, we have proved the final statement in the lemma for $W=W_{\phi_0}$ where $\phi_0=f\Theta_{r-1}$.

\end{proof}

\subsection{A Meromorphic Continuation for $J_1(s,h,\chi,W)$}

We still assume $F=\mathbb{R}$ or $\mathbb{C}$.

Recall \begin{IEEEeqnarray}{rCl}J_1(s,h,\chi,W)&=& 
\chi(\det h)|\det h|^{s-1} \nonumber \\
&&\times \int_{N_{n-1}\backslash GL_{n-1}}\int_{\mathfrak{p}_{0,n}\backslash M_n}W\left[\sigma\left(\begin{array}{cc}1_n&Z\\0&1_n\end{array}\right)\left(\begin{array}{cccc} g &0&0&0\\0&1&0&0\\0&0&g&0\\0&0&0&1\end{array}\right)\left(\begin{array}{cc} h &0\\0&h\end{array}\right)\right]\nonumber \\
&&\times \psi(-\textrm{Tr}(Z))dZ\,\chi(\det g)|\det g|^{s-1}dg.\label{J1again}\end{IEEEeqnarray}

For now, we will assume that $h=k\in {K_n}$. Then we consider the integral 
\begin{IEEEeqnarray}{rCl}J_1(s,k,\chi,W)&=&\int_{N_{n-1}\backslash G_{n-1}}\int_{\mathfrak{p}_{0,n}\backslash M_n}W\left[\sigma\left(\begin{array}{cc}1_n&Z\\0&1_n\end{array}\right)\left(\begin{array}{cccc} g &0&0&0\\0&1&0&0\\0&0&g&0\\0&0&0&1\end{array}\right)\left(\begin{array}{cc} k &0\\0&k\end{array}\right)\right]\nonumber \\
&&\times \psi(-\textrm{Tr}(Z))dZ\,\chi(\det g)|\det g|^{s-1}dg.\label{justk}\end{IEEEeqnarray}

As the integrand $$\int_{\mathfrak{p}_{0,n}\backslash M_n}W\left[\sigma\left(\begin{array}{cc}1_n&Z\\0&1_n\end{array}\right)\left(\begin{array}{cccc} g &0&0&0\\0&1&0&0\\0&0&g&0\\0&0&0&1\end{array}\right)\left(\begin{array}{cc} k &0\\0&k\end{array}\right)\right]\psi(-\textrm{Tr}(Z))dZ$$ is invariant in $g$ under left translation by elements of $N_{n-1}$, we appeal to the Iwasawa decomposition of $g=nac$, $n\in N_{n-1}$, $a\in A_{n-1}$ and $c\in K_{n-1}$, and see that this integral is equal to the integral
\begin{IEEEeqnarray}{rCl}J_1(s,k,\chi,W)&=&\int_{K_{n-1}}\int_{(F^\times)^{n-1}}\int_{\mathfrak{p}_{0,n}\backslash M_n}\nonumber \\
&&W\left[\sigma\left(\begin{array}{cc}1_n&Z\\0&1_n\end{array}\right)\left(\begin{array}{cccc} a &0&0&0\\0&1&0&0\\0&0&a&0\\0&0&0&1\end{array}\right)\left(\begin{array}{cccc} c &0&0&0\\0&1&0&0\\0&0&c&0\\0&0&0&1\end{array}\right)\left(\begin{array}{cc} k &0\\0&k\end{array}\right)\right]\nonumber \\
&& \times \psi(-\textrm{Tr}(Z))dZ\,\chi(\det a)\delta_{n-1}(a)^{-1}|\det a|^{s-1}d^\times a\,dc.\label{justak}\end{IEEEeqnarray} 
where $$a=\textrm{diag}(a_1,a_2,\ldots,a_{n-1},1)$$
and $\delta_{n-1}$ is the modular character associated to the standard Borel subgroup of $GL_{n-1}$.

For ease of notation, we set $$\kappa(c,k)=\left(\begin{array}{cccc} c &0&0&0\\0&1&0&0\\0&0&c&0\\0&0&0&1\end{array}\right)\left(\begin{array}{cc} k &0\\0&k\end{array}\right)\in K_r.$$

We have $J_1(s,k,\chi,W)=$ $$\int_{K_{n-1}}\int_{(F^\times)^{n-1}}\Lambda_{n}\left(W,\left(\begin{array}{cccc} a &0&0&0\\0&1&0&0\\0&0&a&0\\0&0&0&1\end{array}\right)\kappa(c,k)\right)\chi(\det a)\delta_{n-1}(a)^{-1}|\det a|^{s-1}d^\times a\,dc.$$

Now, we apply Lemma \ref{gauge}, and replace $\Lambda_{n}$ with a finite sum of the form $$\sum \phi_\xi\left(\frac{a_1}{a_2},\frac {a_2}{a_3},\ldots,a_{n-1},\kappa(c,k)\right)\xi\left(\frac{a_1}{a_2},\frac {a_2}{a_3},\ldots,a_{n-1}\right)$$ where $\phi_\xi\in \mathcal{S}(F^{n-1}\times K_r)$ and $\xi$ are finite functions from the set $X$ described in Lemma \ref{gauge}.

Thus,  $J_1(s,k,\chi,W)$ is a finite sum of integrals of the form 
\begin{multline}\int_{K_{n-1}}\int_{(F^\times)^{n-1}}\phi_\xi\left(\frac{a_1}{a_2},\frac {a_2}{a_3},\ldots,a_{n-1},\kappa(c,k)\right)\mu_1\left(\frac{a_1}{a_2}\right)\mu_2\left(\frac{a_2}{a_3}\right)\cdots \mu_{n-1}(a_{n-1}) d^\times a_1d^\times a_2\cdots d^\times a_{n-1}\,dc\nonumber \end{multline}
where $\phi_\xi \in\mathcal{S}(F^{n-1}\times K_r)$, $$\mu_j(a)=\chi_0(a)^{m_j}|a|^{s+\lambda_j}(\log |a|)^{n_j}$$ with $$\chi_0(x)=\frac {x}{|x|}\hspace{0.5cm}\textrm{if }F=\mathbb{R}$$ and $$\chi_0(x)=\frac {x}{|x|^{1/2}}\hspace{0.5cm}\textrm{if }F=\mathbb{C},$$ $m_j$ is an integer, $n_j\geq 0$ an integer, and $\lambda_j$ is a complex number.

After a change of variables, this integral becomes \begin{multline}\int_{K_{n-1}}\int_{(F^\times)^{n-1}}\phi\left(a_1,a_2,\ldots,a_{n-1},\kappa(c,k)\right)\mu_1\left(a_1\right)\mu_2\left(a_2\right)\cdots \mu_{n-1}(a_{n-1})d^\times a_1d^\times a_2\cdots d^\times a_{n-1}\,dc.\nonumber\end{multline}

Looking just at the inner integral
\begin{multline}\int_{(F^\times)^{n-1}}\phi\left(a_1,a_2,\ldots,a_{n-1},\kappa(c,k)\right)\mu_1\left(a_1\right)\mu_2\left(a_2\right)\cdots \mu_{n-1}(a_{n-1})d^\times a_1d^\times a_2\cdots d^\times a_{n-1}, \label{precintegral}\end{multline}
 we have a Mellin Transform-style integral.

By Proposition \ref{expansion}, this converges absolutely for $\Re(s)$ sufficiently large and has a meromorphic continuation to the entire complex plane. 
In particular, for any positive real number $M$, there exists polynomial $Q_{\xi,M}(s)$, depending only on $M$ and the $\mu_j$ (which are determined by the $\xi$ occurring in Lemma \ref{gauge}),  and for each $\phi_\xi$, $c\in K_{n-1}$ and $k\in K_n$,  there exists a function $H_{\phi_\xi,\kappa(c,k)}(s)$ which is holomorphic in $s$ for $\Re(s)>-M$ such that (\ref{precintegral}) equals $$\frac{H_{\phi_\xi,\kappa(c,k)}(s)}{Q_{\xi,M}(s)}\hspace{0.5cm}\textrm{for }\Re(s)>-M.$$

By the continuity result (\ref{continuity}) of Proposition \ref{expansion}, we see that $H_{\phi_\xi,\kappa(c,k)} (s)$ varies continuously with respect to $\kappa(c,k)$  as a function of $s$ in the topology of uniform convergence on compact sets of the right half plane $\Re(s)>-M$.

Let $Q_M(s)$ be the least common multiple of the polynomials $Q_{\xi,M}(s)$ (recall that there were only finitely many such $\xi$ occurring in the expansion of $\Lambda_{n}(W,\cdot)$ and they are independent of the choice of $W$), and setting $$H_{W,\kappa(c,k)}(s)=\sum_\xi \frac{Q_M(s)}{Q_{\xi,M}(s)}H_{\phi_\xi,\kappa(c,k)}(s).$$ 

Then we have \begin{equation}J_1(s,k,\chi,W)=\frac{\int_{K_{n-1}} H_{W,\kappa(c,k)} (s) \,dc}{Q_M(s)} \label{integralc}\end{equation}
where $H_{W,\kappa(c,k)}$ is meromorphic in the right half plane $\Re(s)>-M$ and $H_{W,\kappa(c,k)}$ is holomorphic in $s$ for $\Re(s)>-M$ and depends continuously on $\kappa(c,k)$ in the topology of uniform convergence on compact sets in the half plane $\Re(s)>-M$.

Then $$H_1(s,k,W)=\int_{K_{n-1}} H_{W,\kappa(c,k)}(s)\, dc$$ defines a function which is again holomorphic in $s$ for $\Re(s)>-M$ and also depends continuously on $k$ in the topology of uniform convergence in compact sets in the half plane $\Re(s)>-M$.

Thus, after integrating with respect to $c$ we obtain $$J_1(s,k,\chi,W)= \frac{H_1(s,k,W)}{Q_M(s)}$$ which gives  meromorphic continuation of $J_1(s,k,\chi,W)$ to the right half plane $\Re(s)>-M$. Since $M$ is arbitrary, this gives a meromorphic continuation of $J_1(s,k,\chi,W)$ to the whole complex plane.

Now suppose $h\in GL_n$. Using a variation of the Iwasawa decomposition of $GL_n$, we can write $h=nazk$ where $n\in N_{n-1}$, $a\in A_{n-1}$, $z\in Z_n$, the center of $GL_n$, and $k\in K_n$. Assume $z=\textrm{diag}(b,\ldots, b)$ with $b\in F^\times$. If we further choose $b$ to be positive real, which is always possible, the choice of $b$ is uniquely and continuously determined by $h$. In fact, $$b=||\epsilon_n h||$$ where $||\cdot ||$ is the usual euclidean norm on $F^n$.

Let $\omega$ denote the central character of the representation $\pi$. Then, from the invariance of $J(s,h,\chi,W)$ under left translation of $h$ by elements of the mirabolic subgroup $P_n$, and the transformation property of $W$ under the center of $GL_{r}$, we have $$J_1(s,h,\chi,W)=\omega\chi^n(b)|b|^{n(s-1)}J_1(s,k,\chi,W).$$

In this manner we can define the meromorphic continuation of $J_1(s,h,\chi,W)$ for any $h$ in $GL_n$.

In particular, for $h\in GL_n$, $h=nazk$, we set 
$$H_1(s,h,W)=\omega\chi^n(\epsilon_n h)||\epsilon_n h||^{n(s-1)}H_1(s,k,W).$$ There is no ambiguity here since $||\epsilon_n k||=1$ for $k\in K_n$.
Then, given $M$ a positive real number, we have a polynomial $Q_M(s)$, independent of $W$ and $h$, and a function $H_1(s,h,W)$, holomorphic in $s$ for $\Re(s)>-M$ and depending continuously on $h$ in the topology of uniform convergence on compact sets in the right half plane $\Re(s)>-M$, such that $$J_1(s,h,\chi,W)= \frac{H_1(s,h,W)}{Q_M(s)}\hspace{0.5cm}\textrm{for }\Re(s)>-M.$$

We have proved the following:

\begin{prop}\label{meroJ1} For any $W\in\mathcal{W}(\pi,\psi)$ and $h\in GL_n$, the integral $J_1(s,h,\chi,W)$, which converges absolutely for $\Re(s)\gg 0$, has a meromorphic continuation to the whole complex plane.

In particular, given $M$, a positive real number, there exists a polynomial $Q_M$, independent of $W$ and $h$,  and a complex valued function $H_1(s,h,W)$ defined and holomorphic in $s$ on the right half plane $\Re(s)>-M$, such that
 \begin{equation}J_1(s,h,\chi,W)= \frac{H_1(s,h,W)}{Q_M(s)}\label{J1mero}\hspace{0.5cm}\textrm{for }\Re(s)>-M,\end{equation} where $H_1(s,h,W)$  varies continuously with $h$, as a function of $s$ in the topology of uniform convergence on compact sets in the right half plane $\Re(s)>-M$.
\end{prop}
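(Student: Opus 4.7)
The plan is first to reduce the problem to the case $h = k \in K_n$. Because $J_1(s,h,\chi,W)$ is left-invariant under the mirabolic subgroup $P_n$, a variant of the Iwasawa decomposition allows one to write any $h \in GL_n$ as $h = nazk$ with $n \in N_{n-1}$, $a \in A_{n-1}$, $z = \textrm{diag}(b,\ldots,b)$ in the center $Z_n$, and $k \in K_n$; the scalar $b$ can be taken as $b = \|\epsilon_n h\|$, which depends continuously on $h$. Since the center acts on $W$ via the central character $\omega$, the reduction yields $J_1(s,h,\chi,W) = \omega\chi^n(b)\,|b|^{n(s-1)}\,J_1(s,k,\chi,W)$, with a prefactor that is entire in $s$ and continuous in $h$. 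Hence meromorphic continuation and continuous dependence for $h \in K_n$ will suffice.

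For $h = k \in K_n$, the next step is to apply the Iwasawa decomposition to the inner variable $g = n'a'c$ with $a' \in A_{n-1}$ and $c \in K_{n-1}$. The $n'$-integration is absorbed by the left $N_{n-1}$-invariance of the $Z$-integral, so that $J_1(s,k,\chi,W)$ becomes an iterated integral over $K_{n-1}$ and $(F^\times)^{n-1}$ whose integrand, up to the Haar measure factor $\delta_{n-1}(a')^{-1}$, is $\Lambda_n(W,\textrm{diar}[a_1,\ldots,a_{n-1}]\kappa(c,k))$. Lemma \ref{gauge} then expresses this as a finite sum of terms $\phi_\xi(a_1,\ldots,a_{n-1},\kappa(c,k))\,\xi(a_1,\ldots,a_{n-1})$, where the finite set of finite functions $\xi$ depends only on $\pi$ (not on $W$), and each $\phi_\xi \in \mathcal{S}(F^{n-1}\times K_r)$ depends continuously on $\kappa(c,k)$.

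After a change of variables converting $\textrm{diar}$-coordinates into $\textrm{diag}$-coordinates, the inner integration in the $a_j$ variables is precisely a generalized Mellin transform of the form studied in Proposition \ref{expansion}. For a given $M > 0$, that proposition supplies, for each $\xi$, a polynomial $Q_{\xi,M}(s)$ depending only on the $\mu_j$ attached to $\xi$ (hence only on $\pi$), and a numerator $H_{\phi_\xi,\kappa(c,k)}(s)$ holomorphic on $\Re(s) > -M$ and bounded by a continuous seminorm of $\phi_\xi(\cdot,\kappa(c,k))$ on compact subsets of that half-plane. Since only finitely many $\xi$ occur, let $Q_M(s)$ be the least common multiple of the $Q_{\xi,M}(s)$ and $H_{W,\kappa(c,k)}(s) = \sum_\xi (Q_M/Q_{\xi,M}) H_{\phi_\xi,\kappa(c,k)}(s)$, so that the inner integral equals $H_{W,\kappa(c,k)}(s)/Q_M(s)$ on $\Re(s) > -M$.

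Finally, one integrates over the compact group $K_{n-1}$. Because the seminorm bound in Proposition \ref{expansion} is uniform on compact sets of $s$, and $\kappa(c,k)\mapsto \phi_\xi(\cdot,\kappa(c,k))$ is continuous into the space of Schwartz-Bruhat functions, $c \mapsto H_{W,\kappa(c,k)}(s)$ is continuous into the space of holomorphic functions on $\Re(s) > -M$ endowed with the topology of uniform convergence on compacta. Hence $H_1(s,k,W) := \int_{K_{n-1}} H_{W,\kappa(c,k)}(s)\,dc$ is holomorphic on $\Re(s) > -M$ and continuous in $k$; combining with the reduction $H_1(s,h,W) = \omega\chi^n(\epsilon_n h)\,\|\epsilon_n h\|^{n(s-1)} H_1(s,k,W)$ from the first paragraph yields the proposition, with $Q_M$ independent of $W$ and $h$, and with $M$ arbitrary giving the continuation to all of $\mathbb{C}$. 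The main technical obstacle is not any new analytic estimate but the careful bookkeeping required to propagate the continuous dependence through three successive constructions — the gauge expansion of $\Lambda_n$, the Mellin-style meromorphic continuation, and the integration over $K_{n-1}$ — while keeping the denominator $Q_M$ independent of both $W$ and $h$.
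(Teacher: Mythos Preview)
Your proposal is correct and follows essentially the same route as the paper's own argument: reduce to $h=k\in K_n$ via the Iwasawa decomposition and the central character, apply the Iwasawa decomposition in the $g$-variable, invoke Lemma~\ref{gauge} to expand $\Lambda_n$, feed the resulting terms into Proposition~\ref{expansion}, take the least common multiple of the finitely many $Q_{\xi,M}$, and finally integrate over $K_{n-1}$ using the uniform continuity furnished by the seminorm estimate. The only difference is cosmetic --- the paper treats the case $h=k\in K_n$ first and performs the reduction to general $h$ at the end, whereas you place the reduction up front --- and your summary of how continuous dependence propagates through the three constructions matches the paper's reasoning exactly.
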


\subsection{Proof of Theorem \ref{main} in the Archimedean Case}

Now we are ready to prove Theorem \ref{main}.

\begin{proof}[Proof of Theorem \ref{main}]

Recall that for $\Re(s)\gg0$ we have
\begin{equation}J(s,\chi,\Phi,W)=\int_{P_n\backslash GL_n} J_1(s,h,\chi,W)\Phi(\epsilon_n h)dh.\label{blah}\end{equation}

Fix $M$ a positive real number. Using (\ref{J1mero}) given in Proposition \ref{meroJ1}, we can write the right hand side of (\ref{blah})
as: \begin{equation}\frac {1}{Q_M(s)}\int_{P_n\backslash GL_n}H_1(s,h,W)\Phi(\epsilon_n h) dh\hspace{0.5cm}\textrm{for }\Re(s)>-M.\label{startJ}\end{equation}
Again, we emphasize that the function $H_1(s,h,W)$  varies continuously with $h$, as a function of $s$, in the topology of uniform convergence on compact sets in the right half plane $\Re(s)>-M$. Furthermore, the map $[h]\mapsto \epsilon_n h  \,:P_n\backslash Gl_n\rightarrow F^n\backslash \{0\}$ is a homeomorphism. Thus, if $\Phi$ has compact support in $F^n\backslash \{0\}$, then the function $h\mapsto \Phi(\epsilon_n h)$ has support in $GL_n$ which is compact modulo $P_n$. Therefore, the integral in (\ref{startJ}) converges absolutely for $\Phi$ with compact support in $F^n\backslash \{0\}$, and so defines a function $H(s,W,\Phi)$ which is holomorphic in $s$ for $\Re(s)>-M$. 

Since $M$ was arbitrary, this gives the meromorphic continuation of $J(s,\chi,\Phi,W)$ to the whole complex plane. 

Now, fix a complex number $s_0$ and choose $M$ with $-M<\Re(s_0)$. 

Take $W$ to be the Whittaker function from Lemma \ref{J1}. Then $J_1(s,1_n,\chi,W)$ is holomorphic as a function of $s$ and non-zero at $s=s_0$. That is, $$J_1(s,1_n,\chi,W)=\frac{H_1(s,1_n,W)}{Q_M(s)}$$ is holomorphic for all $s$ and nonzero for $s=s_0$.

If $s_0$ is not a root of $Q_M(s)$, this implies that $H_1(s_0,1_n,W)\neq 0$. By choosing $\Phi$ to be nonnegative real valued with compact support near $\epsilon_n$, so that the support of $\Phi(\epsilon_n \cdot)$ has compact support modulo $P_n$ near the identity in $GL_n$, we have $$H(s_0,W,\Phi)=\int_{P_n\backslash GL_n} H_1(s_0,h,W) \Phi(\epsilon_n h)dh\neq 0$$ and hence $J(s_0,\chi,\Phi,W)\neq 0.$

On the other hand, if $s_0$ is a root of $Q_M(s)$, then, since $J_1(s_0,1_n,\chi,W)$ is nonzero and finite, (\ref{J1mero}) implies that $H_1(s,1_n,W)$ has a zero at $s=s_0$ of the same order. Let $b$ be the order of this zero. Then we can write $$H_1(s,1_n,W)=(s-s_0)^b G_1(s,1_n,W)$$ where $G_1(s,1_n,W)$ is holomorphic in $s$ for $\Re(s)>-M$ and $G_1(s_0,1_n,W)\neq 0$.

By the Cauchy Integral Formula, if we take $\Delta$ to be a closed disk in the half plane $\Re(s)>-M$ centered at $s_0$, with radius $\varrho$ small enough so that $Q_M(s)$ and $H_1(s,1_n,W)$ have no other zeros in $\Delta$, and we let $\gamma$ denote the boundary of $\Delta$, then $$G_1(s_0,1_n,W)=\frac {1}{2\pi i}\oint_\gamma \frac {H_1(s,1_n,W)}{(s-s_0)^{b+1}}\, ds.$$

Choose a positive number $\eta$ with $$\eta<\varrho^{b}|G_1(s_0,1_n,W)|.$$

Since the continuity of $H_1(s,h,W)$ is uniform on compact sets such as $\gamma$, there is an open neighborhood $V$ of $1_n$ in $GL_n(F)$ such that $$|H_1(s,h,W)-H_1(s,1_n,W)|<\eta$$ for $h\in V$ and $s\in \gamma$. In fact, since $H_1(s,h,W)$ is invariant under left translation of $h$ by elements of $P_n$, the above holds for all $s\in \gamma$ and $h\in U=P_n V$.

Now, we choose $\Phi$ to be nonegative real valued with compact support, such that the support of $\Phi(\epsilon \cdot)$ is contained in $U$, and such that $$\int_{P_n\backslash GL_n} \Phi(\epsilon_n h)\, dh=1.$$

Then, for this type of $\Phi$ and $s\in \gamma$, 
\begin{IEEEeqnarray}{rCl}\IEEEeqnarraymulticol{3}{l}{|H(s,W,\Phi)-H_1(s,1_n,W)|}\nonumber \\
\quad&=&\left|\int_{P_n\backslash GL_n} H_1(s,h,W)\Phi(\epsilon_n h)\, dh-H_1(s,1_n,W)\right|\nonumber \\
&=&\left|\int_{P_n\backslash GL_n} H_1(s,h,W)\Phi(\epsilon_n h)\, dh-\int_{P_n\backslash GL_n}H_1(s,1_n,W)\Phi(\epsilon_n h)\, dh\right|\nonumber \\
&\leq& \int_{P_n\backslash GL_n}\left|H_1(s,h,W)-H_1(s,1_n,W)\right|\Phi(\epsilon_n h)\, dh\nonumber \\
&<&\eta \int_{P_n\backslash GL_n} \Phi(\epsilon_n h) dh\nonumber \\
&=&\eta. \end{IEEEeqnarray}

Therefore, 
\begin{IEEEeqnarray}{rCl}\IEEEeqnarraymulticol{3}{l}{\left|\frac{1}{2\pi i}\oint_\gamma \frac{H(s,W,\Phi)}{(s-s_0)^{b+1}}\, ds-G_1(s_0,1_n,W)\right|}\nonumber \\ \quad
&=&\left|\frac{1}{2\pi i}\oint_\gamma \frac{H(s,W,\Phi)}{(s-s_0)^{b+1}}\, ds-\frac {1}{2\pi i}\oint_\gamma\frac{H_1(s,1_n,W)}{(s-s_0)^{b+1}}\, ds\right|\nonumber \\
&<&\frac {\eta}{2\pi} \oint_\gamma \frac {1}{|s-s_0|^{b+1}}\, d|s|\nonumber \\
&=&\frac {\eta}{\varrho^{b}}\nonumber \\
&<&|G_1(s_0,1_n,W)|. \nonumber \end{IEEEeqnarray}
Since $G_1(s_0,1_n,W)\neq 0$, this shows that $$\frac{1}{2\pi}\oint_\gamma \frac{H(s,W,\Phi)}{(s-s_0)^{b+1}}\, ds\neq 0$$ for this choice of $W$ and $\Phi$. In particular, $H(s,W,\Phi)$ has a zero of order at most $b$ at $s=s_0$.

Thus $$J(s_0,\chi,\Phi,W)=\frac {H(s_0,W,\Phi)}{Q_M(s_0)}\neq 0.$$

This completes the proof.

\end{proof}

Remark: If $s=s_0$ is a root of $Q_M(s)$ of order $b$, we have not excluded the possibility that, in the above proof, the order of the root in $H(s,W,\Phi)$ is strictly less than $b$. If this is the case, $J(s,\chi,\Phi,W)$ will have a pole at $s=s_0$.

\section{The Nonarchimedean Case}

We assume throughout this section that $F$ is nonarchimedean. Let $\mathcal{O}$ denote the ring of integers in $F$, $\wp$ the unique maximal ideal in $\mathcal{O}$, and let $\varpi$ be the uniformizer of $\wp$. Let $q={|\varpi|}^{-1}$.
In this section, we will prove Theorem \ref{main} in the nonarchimedean case.

\subsection{Proof of Lemma \ref{J1} in the Nonarchimedean Case}

Again, we find it convenient to replace $W$ in (\ref{j1}) with its right translate by $\sigma^{-1}$. Recall, for $\Re(s)\gg 0$, we have (\ref{jj1real}) 
\begin{equation}J(s,\chi,\Phi,\rho(\sigma^{-1})W)=\int_{P_n\backslash GL_n} J_1(s,h,\chi,\rho(\sigma^{-1})W)\Phi(\epsilon_n h)dh.\end{equation}
where
\begin{IEEEeqnarray}{rCl}\IEEEeqnarraymulticol{3}{l}{J_1(s,h,\chi,\rho(\sigma^{-1})W)=
\chi(\det h)|\det h|^{s-1}}\nonumber\\ \quad & &\times \int_{N_{n-1}\backslash GL_{n-1}}\int_{\mathfrak{p}_{0,n}\backslash M_n}W\left[\sigma\left(\begin{array}{cc}1_n&Z\\0&1_n\end{array}\right)\left(\begin{array}{cccc} g &0&0&0\\0&1&0&0\\0&0&g&0\\0&0&0&1\end{array}\right)\left(\begin{array}{cc} h &0\\0&h\end{array}\right)\sigma^{-1}\right]\nonumber \\ & & \times\psi(-\textrm{Tr}(Z))dZ\,\chi(\det g)|\det g|^{s-1}dg.\label{j1na}\end{IEEEeqnarray}

By appropriately modifying the work in the previous section, we can show that the integrals $J(s,\chi,\Phi,W)$ and $J_1(s,h,\chi,W)$ both have meromorphic continuations in terms of rational functions of $q^{-s}$. Then we could modify the descending induction arguments used to prove Proposition 3, Section 7 of \cite{jasha} to obtain a result similar to Theorem \ref{main} in the nonarchimedean case involving meromorphic continuations. However, we wish to prove Lemma \ref{J1}, which gives $J_1(s,1_n,\chi,W)$ as an entire function for at a least one choice of $W$, using techniques similar to those used in the archimedean case. We will do this using Lemma \ref{smoothvector} and an explicit choice of function $\phi_0$.

\begin{proof}[Proof of Lemma \ref{J1} in the Nonarchimedean Case.]
We will prove this lemma by constructing an appropriate Whittaker function $W_0$.

Choose an integer $m>0$ sufficiently large so that the additive character $\psi$ is trivial on $\wp^m$, and that the multiplicative character $\chi$ is trivial on $1+\wp^m$. We denote by $K_j$ the compact open subgroup $1_j+M_j(\wp^m)$ of $GL_j(F)$.
In particular, for all $g\in K_{n-1}$, $\chi(\det g)=1$ and $|\det g|=1$. Furthermore, if $Z\in M_{n-1}(\wp^m)$, then $\psi(-\textrm{Tr}(Z))=1$.

Now let $\phi_0$ denote the smooth function on $GL_{r-1}(F)$ defined by 
$$\phi_0(g)=c\int_{N_{r-1}} \textrm{ch}_{K_{r-1}}(ug)\overline{\theta_{r-1}(u)}du$$
where $\textrm{ch}_{K_{r-1}}$ is the characteristic function of $K_{r-1}$ and $c=\frac{1}{\textrm{vol} (N_{r-1}\cap K_{r-1})}.$
Then $\phi_0$ is in the space $C_c^\infty(\theta_{r-1},GL_{r-1})$. In particular, $\phi_0$ is invariant by translation on the right by elements of $K_{r-1}$. In fact, 
%
\begin{equation}\phi_0(g)=\left\{\begin{array}{ll}\theta_{r-1}(u) & \textrm{if } g=uk\textrm{ for }u\in N_{r-1}\textrm{, } k\in K_{r-1}\\ 0 & \textrm{otherwise}.\end{array}\right.\label{phinot}\end{equation}

Now, by Proposition \ref{smoothvector}, there exists a $W_0\in\mathcal{W}(\pi,\psi)$ such that $W_0\left(\begin{array}{cc} g&0\\0&1\end{array}\right)=\phi_0(g)$ 
for all $g\in GL_{r-1}$.

Now, once again, we decompose $M_n(F)$ as $M_{n-1}(F)\oplus F^{n-1} \oplus F^{n-1}\oplus F$, writing $Z=\left(\begin{array}{cc} Z'&u\\Y& x\end{array}\right)$
with $Z'\in M_{n-1}$, $Y\in F^{n-1}$ as a row vector, $u\in F^{n-1}$ as a column vector and $x\in F$. Under this decomposition, the subspace $\mathfrak{p}_{0,n}$ decomposes as $\mathfrak{p}_{0,n-1}\oplus \{0\}\oplus F^{n-1} \oplus F$. Thus, the quotient space $\mathfrak{p}_{0,n}\backslash M_n$ decomposes as $\mathfrak{p}_{0,n-1}\backslash M_{n-1}\oplus F^{n-1}\oplus \{0\} \oplus \{0\}$. So, we write $Z=\left(\begin{array}{cc} Z'&0\\Y&0\end{array}\right)$ and as $Z$ ranges over $\mathfrak{p}_{0,n}\backslash M_n$, $Z'$ ranges over $\mathfrak{p}_{0,n-1}\backslash M_{n-1}$ and $Y$ ranges over $F^{n-1}$.

Also write $\sigma=\left(\begin{array}{cc}\sigma'&0\\0&1\end{array}\right)$ with $\sigma'\in GL_{r-1}$. Notice that $$\sigma\left(\begin{array}{cc}1_n&Z\\0&1_n\end{array}\right)\left(\begin{array}{cccc} g &0&0&0\\0&1&0&0\\0&0&g&0\\0&0&0&1\end{array}\right)\sigma^{-1}
=\left(\begin{array}{cc}\sigma'\left(\begin{array}{ccc}g&0&Z'g\\0&1&Yg\\0&0&g\end{array}\right){\sigma'}^{-1} &0\\0&1\end{array}\right)$$
which is in $GL_{r-1}$ (as a subgroup of $GL_{r}$).

Thus, taking $W=\rho(\sigma)W_0$ and $h=1_n$ in (\ref{j1na}),
and after a change of variable ($Yg \mapsto Y$), we have 
\begin{multline}J_1(s,1_n,\chi,W_0)= \\ \int_{N_{n-1}\backslash G_{n-1}}\int_{F^{n-1}}\int_{\mathfrak{p}_{0,n-1}\backslash M_{n-1}}\phi_0\left[\sigma'\left(\begin{array}{ccc}g&0&Z'g\\0&1&Y\\0&0&g\end{array}\right){\sigma'}^{-1}\right]\psi(-\textrm{Tr}(Z'))dZ'\,dY \chi(\det g)|\det g|^{s-2}dg.\label{ourcasena1}\end{multline}

For convenience, let $\gamma(Z',Y,g)$ denote the matrix $\sigma'\left(\begin{array}{ccc}g&0&Z'g\\0&1&Y\\0&0&g\end{array}\right){\sigma'}^{-1}.$

Now, assume $\phi_0\left[\gamma(Z',Y,g)\right]\neq 0.$
Then by (\ref{phinot}), $\gamma(Z',Y,g)=uk$ for some $u\in N_{r-1}$ and $k\in K_{r-1}$.
That is, setting $u'=\sigma'^{-1}u^{-1}\sigma'$,
 $$\sigma'u'\left(\begin{array}{ccc}g&0&Z'g\\0&1&Y\\0&0&g\end{array}\right){\sigma'}^{-1}\in K_{r-1}$$
or $$u'\left(\begin{array}{ccc}g&0&Z'g\\0&1&Y\\0&0&g\end{array}\right)\in {\sigma'}^{-1}K_{r-1}\sigma'=K_{r-1}.$$
 Note that the matrix $u'$ has the form $$u'=\left(\begin{array}{ccc} n_1& u_1& T\\0&1&0 \\ U& u_2 & n_2\end{array}\right)$$ where $n_i\in N_{n-1}$, $u_i\in F^{n-1}$ as column vectors, $T\in\mathfrak{p}_{0,n-1}$ and $U\in \mathfrak{u}_{0,n-1}$, the space of upper triangular matrices with $0$'s along the diagonal.

Using this expression for $u'$, $u'\left(\begin{array}{ccc}g&0&Z'g\\0&1&Y\\0&0&g\end{array}\right)$ takes the form 
\begin{equation}\left(\begin{array}{ccc}n_1 g& u_1 & n_1Z'g+u_1 Y+Tg\\0&1&Y\\Ug& u_2& UZ'g+u_2Y+n_2g\end{array}\right)\label{ink}\end{equation}
which, if $\phi_0[\gamma(Z',Y,g)]\neq 0$, is in $K_{r-1}$.
We see from the top left corner of this matrix that $g$ must be of the form $u_0 k_0$, with $u_0\in N_{n-1}$ and $k_0\in K_{n-1}$ and from the last entry in the center row that $Y\in \oplus_{i=1}^{n-1}\wp^m$. Thus the outer most integral in (\ref{ourcasena1}) can be taken over the image of $K_{n-1}$ in $N_{n-1}\backslash GL_{n-1}$ (this image will be compact) and the integral in $Y$ can be assumed to be over $\oplus_{i=1}^{n-1} \wp^m$, a compact set.

Also, in this case, $\chi(g)=1$ and $| \det g|=1$ for all $g$ for which $\phi_0[\gamma(Z',Y,g)]\neq 0$. 

 Note that, in this case,
\begin{IEEEeqnarray}{rCl}\gamma(Z',Y,u_0k_0)
&=&\sigma'\left(\begin{array}{ccc}u_0&0&Z'u_0\\0&1&0\\0&0&u_0\end{array}\right){\sigma'}^{-1}\sigma'\left(\begin{array}{ccc}k_0&0&0\\0&1&Y\\0&0&k_0\end{array}\right){\sigma'}^{-1}.\nonumber \end{IEEEeqnarray} Here, the matrix $\sigma'\left(\begin{array}{ccc}k_0&0&0\\0&1&Y\\0&0&k_0\end{array}\right){\sigma'}^{-1}$ is in $K_{r-1}$, and $\phi_0$ is invariant on the right by this subgroup. Thus, if $\phi_0[\gamma(Z',Y,g)]\neq 0$ then $\phi_0[\gamma(Z',Y,g)]=\phi_0[\gamma(Z',0,u_0)].$

Furthermore, notice that the expression
$\phi_0[\gamma(Z',0,g)]\psi(-\textrm{Tr}(Z'))$ is invariant under translating $g$ on the left by elements of $N_{n-1}$. (The matrix $\sigma' \left(\begin{array}{ccc} u_0 &0&0\\0&1&0\\0&0&u_0\end{array}\right)\sigma'^{-1}$ is in $N_{r-1}$ and is mapped to $1$ by $\theta_{r-1}$. Also, $\textrm{Tr}$ is invariant under conjugation.) Therefore, (\ref{ourcasena1}) is the same as
$$\int_{(N_{n-1}\cap K_{n-1})\backslash K_{n-1}}\int_{\oplus_{i=1}^n \wp^m}\int_{\mathfrak{p}_{0,n-1}\backslash M_{n-1}}\phi_0\left[\gamma(Z',0,1_{n-1})\right]\psi(-\textrm{Tr}(Z'))dZ'\,dY\,dg.$$
The integrals over $g$ and $Y$ come out as positive constants and so, 
\begin{equation}J_1(s,1_n,\chi,W_0)=c\int_{\mathfrak{p}_{0,n-1}\backslash M_{n-1}}\phi_0\left[\gamma(Z',0,1_{n-1})\right]\psi(-\textrm{Tr}(Z'))dZ',\label{J1justZ'}\end{equation}
 for some $c\neq 0$.

Now, we can evaluate this integral explicitly. Again, assuming that $\phi_0[\gamma(Z',0,1_{n-1})]$ is not zero, $\gamma(Z',0,1_{n-1})\in N_{r-1} K_{r-1}.$
In this case, the matrix (cf. (\ref{ink}))
$\left(\begin{array}{ccc}n_1& u_1 & n_1Z'+T\\0&1&0\\U& u_2& UZ'+n_2\end{array}\right)$ is in $K_{r-1}$, where $n_i\in N_{n-1}$, $u_i\in F^{n-1}$ as column vectors, $T\in\mathfrak{p}_{0,n-1}$ and $U\in \mathfrak{u}_{0,n-1}$.
Looking at the upper right corner, we have $n_1Z'+T\in M_{n-1}(\wp^m)$. But $n_1$, in the upper left corner, is in fact in $N_{n-1}\cap K_{n-1}$, so that $Z'+n_1^{-1}T$ is in $M_{n-1}(\wp^m)$. Since $n_1^{-1}T$ is upper triangular, the entries below the diagonal in $Z'$ must be in $\wp^m$. Thus $Z'$, as an element of $\mathfrak{p}_{0,n-1}(F)\backslash M_{n-1}(F)$, is in the image of $M_{n-1}(\wp^m)$ in that quotient space. Therefore, the integral (\ref{J1justZ'}) can be taken over the compact space $\mathfrak{p}_{0,n-1}(\wp^m)\backslash M_{n-1}(\wp^m)$.

In this case, $\psi(-\textrm{Tr}(Z'))=1$ from our choice of $m$ and $\gamma(Z',0,1_{n-1})\in K_{r-1}$. Thus $\phi_0[\gamma(Z',0,1_{n-1})]\psi(-\textrm{Tr}(Z'))=1.$ Thus, (\ref{J1justZ'}) becomes $$J_1(s,1_n,\chi,W_0)=c\int_{\mathfrak{p}_{0,n-1}(\wp^m)\backslash M_{n-1}(\wp^m)} dZ'\neq 0.$$ Thus, as a function of $s$, $J_1(s,1_n,\chi,W_0)$ is in fact a non-zero constant. In particular, it is holomorphic in $s$, and $J_1(s_0,1_n,\chi,W_0)\neq 0$.

\end{proof}

\subsection{Proof of Theorem \ref{main} in the Nonarchimedean Case}

Now we can prove Theorem \ref{main} in the nonarchimedean case. The proof uses the same idea as that in the archimedean case, but is somewhat simpler. 

\begin{proof}[Proof of Theorem \ref{main} in the Nonarchimedean Case.] Let $W_0$ be the Whittaker function given in Lemma \ref{J1} for which $J_1(s,1_n,\chi,W_0)$ is holomorphic and $J_1(s_0,1_n,\chi,W_0)\neq 0$ (in fact, from the proof of Lemma \ref{J1} in the nonarchimedean case, we may assume it is a non-zero constant with respect to $s$).  

Using the smoothness of $W_0$, we can find an integer $m>0$ such that $W_0$ is right invariant under the compact subgroup $$\mathcal{K}_{n,m}=\left\{\left.\left(\begin{array}{cc}k&0\\0&k\end{array}\right)\right| k\in K_{n,m} \right\}$$
where
$$K_{n,m}=1_{n}+M_{n}(\wp^m)$$ 
and $\wp$ is the unique maximal ideal in $F$.

Since $|\det k|=1$ for all $k\in K_{n,m}$, this gives us, for $\Re(s)\gg0$,  $$J_1(s,k,\chi,W_0)=J_1(s,1_n,\chi,W_0)$$ for all $k\in K$.

Notice that the set $\epsilon_n K_{n,m}$ is compact and open in $F^n$ and does not contain the origin.

Choose $\Phi_{K_{n,m}}$ to be the characteristic function of $\epsilon_n K_{n,m}$. 
Recall that, for $h\in GL_n$, the coset of $h$ in  $P_n\backslash GL_n$ is uniquely determined by $\epsilon_n h$. So, if $\epsilon_n h\in \epsilon_n K_{n,m}$, $J_1(s,h,\chi,W_0)=J_1(s,k,\chi,W_0)$ where $k\in K_{n,m}$ is such that $\epsilon_n k=\epsilon_n h$ (as a function of $h$, $J_1(s,h,\chi,W_0)$ is invariant under left translation by elements of $P_n$).
Therefore, 
$$J_1(s,h,\chi,W_0)\Phi_{K_{n,m}}(\epsilon_n h)=\left\{\begin{array}{ll} J_1(s,1_n,\chi,W_0)&\textrm{if }\epsilon_nh\in \epsilon_nK_{n,m},\\0&\textrm{otherwise}.\end{array}\right.$$

Thus, with $\Phi=\Phi_{K_{n,m}}$, (\ref{jj1real}) becomes
\begin{equation}J(s,\chi,\Phi_{K_{n,m}},W_0)=J_1(s,1_n,\chi,W_0)\textrm{vol}(\epsilon_n K_{n,m}).\label{j=j1}\end{equation}

This holds for $\Re(s)$ sufficiently large. However, the right hand side is holomorphic in $s$ by Lemma \ref{J1}, and therefore, (\ref{j=j1}) defines a $J(s,\chi,\Phi_{K_{n,m}}, W_0)$ as a holomorphic function of $s$

Since $\textrm{vol}(\epsilon_n K_{n,m})$ is non-zero, we see that $$J_1(s_0,1_n,\chi,W_0)\neq 0$$ implies that $$J(s_0,\chi,\Phi_{K_{n,m}},W_0)\neq 0.$$

This completes the proof of the theorem.

\end{proof}

\section{The Global Theory}

\subsection{Setup}\label{setup}

We wish to consider the global integral $I$ defined in sections 5 and 6 of \cite{jasha}. For the convenience of the reader, we repeat the full definition and recall some of the main results of those sections here.

Let $F$ be a number field with adele ring $\mathbb{A}_F$.

Let $\psi$ be a non-trivial additive character of $\mathbb{A}_F/ F$. We will consider an automorphic unitary cuspidal representation $\pi$ of $GL_r(\mathbb{A}_F)$. We denote by $\omega_\pi$ the central character associated to $\pi$.
Let $\chi$ be a unitary idele-class character of $F$.
Let $\phi$ be a form in the space of $\pi$.

Again we assume that $r=2n$ is an even integer. Let $M_n$ denote the ring of $n\times n$ matrices over $\mathbb{A}_F$. Let $P_{n-1,n}$ be the parabolic subgroup of type $(n-1,1)$ in $GL_n$, $A_n$ the group of diagonal matrices, $B_n$ the group of upper triangular matrices, $N_n$ the group of upper triangular matrices with unit diagonal, and $Z_n$ the center of $GL_n$.

Define $V_0$ to be the group of matrices of the from $$\left(\begin{array}{cc} 1_n& X\\0&1_n\end{array}\right)$$ where $X\in M_n$.

Define $$\theta'(v)=\psi(\textrm{Tr}X).$$
This defines a character on $V_0(\mathbb{A}_F)$ which is trivial on $V_0(F)$, and is fixed by conjugation by elements of the form: $$\left(\begin{array}{cc} g&0\\0&g\end{array}\right),\hspace{1cm}g\in GL_n(\mathbb{A}_F).$$

Let $\Phi$ be a Schwartz-Bruhat function in $n$ variables. Set
 \begin{equation}f(g,s)=\int_{\mathbb{A}_F} \Phi(\epsilon_n t g) {|t|}^{ns}\chi^n\omega_\pi (t) d^\times t\chi(\det g)|\det g|^s, \label{f}\end{equation}
 where $$\epsilon_n=(\underbrace{0,0,\ldots,0}_{n-1},1).$$

Define the Eisenstein series \begin{equation}E(g,s)=\sum_{\gamma\in P_{n-1,n}(F)\backslash GL_n(F)} f(\gamma g,s).\label{eis}\end{equation}

Finally, we define the integral $I$ as 
\begin{equation}I(s,\chi,\phi,\Phi)=\int_{GL_n(F)\backslash GL_n(\mathbb{A}_F)/Z_n(\mathbb{A}_F)}\int_{V_0(F)\backslash V_0(\mathbb{A}_F)} \phi\left[v\left(\begin{array}{cc} g&0\\0&g\end{array}\right)\right] \theta'(v)dv E(g,s) dg.\label{defI}\end{equation}

Section 5.2 of \cite{jasha} states that this integral converges for all $s$, and so defines a function which is holomorphic except for at those $s$ which are singularities of the Eisenstein series.

Recall that by Lemma 4.2 of \cite{prod1}, or rather the proof of that lemma, the Eisenstein series is holomorphic  on the entire complex plane unless $\chi^n\omega_\pi$ is trivial on the ideles of absolute value 1. In the case that $\chi^n\omega_\pi(\alpha)=|\alpha|^{i\tau}$ ($\tau$ a real number), the Eisenstein series extends meromorphically to the entire complex plane, with at most simple poles at $s=1-i\frac \tau n$ and $s=-i\frac \tau n$ (see equations (6) and (7) of the proof of Lemma 4.2 in \cite{prod1}).

Note that replacing $s$ with $s+i\tau_0$ in (\ref{f}) for some real $\tau_0$ amounts to changing $\chi$ to  $\chi_0=\chi|\cdot|^{i\tau_0}$. 
That is $$f(g,s+i\tau_0)=f_0(g,s)$$ where 
$$f_0(g,s)=\int_{\mathbb{A}_F} \Phi(\epsilon_n t g) {|t|}^{ns}\chi_0^n\omega_\pi (t) d^\times t\chi_0(\det g)|\det g|^s.$$
Thus, there is no loss in generality in assuming $\chi$ is such that $\chi^n\omega_\pi$ is either nontrivial on the ideles of absolute value one or it is the trivial character 1 (i.e., $\tau=0$). We will assume, from this point on, that this is case.

Hence, the Eisenstein series is holomorphic  on the entire complex plane unless $\chi^n\omega_\pi$ is trivial on the ideles of absolute value 1, in which case it is meromorphic with possible poles only at $s=1$ and $s=0$.

Therefore, $I(s,\chi,\phi,\Phi)$ is holomorphic on all of $\mathbb{C}$ unless $\chi^n\omega_\pi=1$, in which case it has simple poles at $s=1$ and $s=0$.

We denote by $\mathcal{W}(\pi,\psi)$ the Whittaker model of $\pi$ and let $W\in\mathcal{W}(\pi,\psi)$.

Define the global integral $$J=J(s,\chi,W,\Phi)$$ defined by \begin{equation}J=\int W\left[\sigma\left(\begin{array}{cc}1_n&Z\\0&1_n\end{array}\right)\left(\begin{array}{cc} g &0\\0&g\end{array}\right)\right]\psi(-\textrm{Tr}(Z))dZ\,\Phi(\epsilon_n g)\chi(\det g)|\det g|^sdg.\label{Jglobaldef}\end{equation}
Here, $Z$ is integrated over the quotient $$\mathfrak{p}_{0,n}(\mathbb{A}_F)\backslash M_n(\mathbb{A}_F),$$ and $g$ is integrated over the quotient $$N_n(\mathbb{A}_F)\backslash GL_n(\mathbb{A}_F).$$
This integral converges absolutely for $\Re(s)\gg 0$.

Proposition 5 of section 6 of \cite{jasha} states:

\begin{prop} For $\Re(s)\gg0$, $$I(s,\chi,\phi,\Phi)=J(s,\chi,W,\Phi)$$
where $$W(g)=\int_{N_r(F)\backslash N_r(\mathbb{A}_F)} \phi(ug)\theta_r(u)du$$
and $\theta_r$ is defined by $$\theta_r(u)=\prod_{j=1}^{r-1} \psi(u_{j,j+1}).$$
\end{prop}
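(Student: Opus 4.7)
The plan is to prove the identity by carrying out the classical Jacquet--Shalika unfolding of the Eisenstein series in three successive stages, all valid for $\Re(s)$ large enough that every rearrangement of sums and integrals is justified by absolute convergence (using the rapid decay of cusp forms and the convergence of $E(g,s)$ in this region).

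\textbf{Stage 1 (Unfolding $E$).} First I would insert the defining series
$E(g,s)=\sum_{\gamma\in P_{n-1,n}(F)\backslash GL_n(F)} f(\gamma g,s)$
into $I$ and collapse the rational-point sum against the quotient $GL_n(F)\backslash GL_n(\mathbb{A}_F)$. Using that the inner $V_0$-integration and $\theta'$ are invariant under right translation by $\mathrm{diag}(\gamma,\gamma)$ for $\gamma\in GL_n(F)$ (since $\theta'$ is trivial on $V_0(F)$ and $\phi$ is left $GL_{2n}(F)$-invariant), this gives
\[
I(s,\chi,\phi,\Phi)=\int_{P_{n-1,n}(F)\backslash GL_n(\mathbb{A}_F)/Z_n(\mathbb{A}_F)} F(g)\,f(g,s)\,dg,
\]
where $F(g)=\int_{V_0(F)\backslash V_0(\mathbb{A}_F)}\phi(v\,\mathrm{diag}(g,g))\theta'(v)\,dv$. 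Both $F$ and $f$ transform by characters of $Z_n(\mathbb{A}_F)$ whose product is trivial, so the quotient is consistent.

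\textbf{Stage 2 (Absorbing the $t$-integral).} Next I would use that $F(zg)=\omega_\pi(z)F(g)$ for $z\in Z_n(\mathbb{A}_F)\cong\mathbb{A}_F^\times$, which is precisely the character conjugate to the factor $\chi^n\omega_\pi(t)|t|^{ns}$ appearing inside $f(g,s)$. This allows the idelic $t$-integration in the definition of $f$ to be folded together with the quotient by $Z_n(\mathbb{A}_F)$ in the outer measure, yielding
\[
I(s,\chi,\phi,\Phi)=\int_{P_{n-1,n}(F)\backslash GL_n(\mathbb{A}_F)} F(g)\,\Phi(\epsilon_n g)\,\chi(\det g)|\det g|^s\,dg.
\]

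\textbf{Stage 3 (Producing the Whittaker function).} Finally I would identify the above with $J(s,\chi,W,\Phi)$. The idea is to Fourier-expand $\phi$ along the full unipotent $N_r$, grouping the integration over $N_r(F)\backslash N_r(\mathbb{A}_F)$ so that (a) conjugation by $\sigma$ transports $V_0$ into a subgroup of $N_r$ in which the positions of the diagonal entries of $X$ ($v=(\begin{smallmatrix}1_n & X\\ 0 & 1_n\end{smallmatrix})$) go to the adjacent-pair positions $(1,2),(3,4),\ldots,(2n-1,2n)$, so that $\theta'(v)$ matches exactly the corresponding contribution of $\theta_r$; (b) the off-diagonal entries of $X$ are picked up in the $Z$-integration over $\mathfrak{p}_{0,n}\backslash M_n$, with the residual $\theta_r$-contributions from the non-adjacent off-block positions producing the $\psi(-\mathrm{Tr}\,Z)$ factor; and (c) the remaining unipotent directions, together with the rational-point sums left over from the Whittaker expansion of $\phi$, unfold the outer quotient $P_{n-1,n}(F)\backslash GL_n(\mathbb{A}_F)$ into $N_n(\mathbb{A}_F)\backslash GL_n(\mathbb{A}_F)$ by absorbing an $N_{n-1}(F)\backslash GL_{n-1}(\mathbb{A}_F)$-integration (this matches the local decomposition $N_n\backslash GL_n\simeq (N_{n-1}\backslash GL_{n-1})\cdot(P_n\backslash GL_n)$ already used in Section~2). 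Assembling these pieces, the integrand becomes exactly the expression defining $J$ once $W$ is recognized as the $\theta_r$-Fourier coefficient of $\phi$.

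The first two stages are essentially routine bookkeeping with characters and measures. The third is the main obstacle: the technical heart is the combinatorial verification that conjugation by $\sigma$, combined with the split of $N_r$ into the image of $V_0$ and the two copies of $N_n$ on the diagonal blocks plus the strictly upper triangular off-block piece, causes the characters $\theta'$ and $\theta_r$ to match and the measures to transport correctly. Once this matching is in place, the standard Whittaker expansion $W(g)=\int_{N_r(F)\backslash N_r(\mathbb{A}_F)}\phi(ug)\theta_r(u)du$ delivers $I=J$.
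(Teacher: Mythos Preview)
The paper does not supply its own proof of this proposition; it simply quotes it as ``Proposition~5 of section~6 of \cite{jasha}'' and moves on. So there is no in-paper argument to compare against, and what you have written is an attempt to reconstruct the Jacquet--Shalika unfolding itself.

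Your Stages~1 and~2 are correct and are exactly the standard opening moves: unfold the Eisenstein sum against the $GL_n(F)$-quotient, then merge the $t$-integral in $f$ with the central quotient. No issues there.

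Stage~3 is where the genuine content lies, and here your sketch is pointed in the right direction but skips the step that actually requires an idea. You write ``Fourier-expand $\phi$ along the full unipotent $N_r$,'' but $N_r$ is non-abelian, so there is no single Fourier expansion to appeal to. What Jacquet and Shalika do in Section~6 of \cite{jasha} is an \emph{iterated} procedure: starting from the $V_0(F)\backslash V_0(\mathbb A_F)$ integral, one Fourier-expands along successive abelian one-parameter subgroups, uses the transitive action of $GL_{n-1}(F)$ (sitting inside the Levi of $P_{n-1,n}$) on the non-trivial characters to collapse each character sum to a single term, and invokes cuspidality of $\phi$ at each step to kill the trivial-character (constant-term) contribution. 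It is this inductive ladder that simultaneously (i) builds up the full $\theta_r$ from $\theta'$, (ii) converts the $P_{n-1,n}(F)$-quotient into the $N_n(\mathbb A_F)$-quotient, and (iii) produces the $\mathfrak p_{0,n}\backslash M_n$ integration in $Z$. Your items (a)--(c) describe the \emph{outcome} of this process accurately, but not the mechanism; in particular you never say where cuspidality enters, and without it the degenerate terms in each Fourier expansion would survive and the unfolding would not terminate in a single Whittaker function.

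So: the architecture of your proof is right, and Stages~1--2 are fine as written. For Stage~3, replace the one-shot ``Fourier-expand along $N_r$'' with the stepwise expansion-plus-cuspidality argument, or simply cite Section~6 of \cite{jasha} as the paper itself does.
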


Now, assume that $\phi$ is a smooth vector in the space of $\pi$ such that the associated Whittaker function $W$ is a product of local Whittaker functions: $$W=\prod_\nu W_\nu.$$
Assume, also that $\Phi$ is a Schwartz-Bruhat function in $n$ variables which is the product of local functions: $$\Phi=\prod_\nu \Phi_\nu.$$
Furthermore, we can write $$\chi=\prod_\nu \chi_\nu$$
and $$\psi=\prod_\nu \psi_\nu.$$

Then, in this case $$J(s,\chi,W,\Phi)=\prod_\nu J(s,\chi_\nu,W_\nu,\Phi_\nu),$$ where the local integrals are defined as in (\ref{Jdef}).

In other words, we have \begin{equation}I(s,\chi,\phi,\Phi)=\prod_\nu J(s,\chi_\nu,W_\nu,\Phi_\nu)\label{euler}\end{equation} for $\Re(s)\gg0.$

\subsection{The Partial L-function}

We recall the definition of the standard local exterior square $L$-function. We keep the same notation as from the previous subsection.
 
Let $S$ be a finite set of places including all archimedean places and all of the ramified nonarchimedean places for $\pi$ and $\chi$. 
For a nonarchimedean place $\nu$,  unramified for both $\pi$ and $\chi$, let $\mathcal{O}_\nu$ denote the ring of integers of $F_\nu$. Let $\varpi_\nu$ denote the generator of the unique maximal ideal $\wp_\nu$ in $\mathcal{O}_\nu$, and denote by $q_\nu$ the cardinality of the residue field. Let $A_\nu\in GL_r(\mathbb{C})$ denote the so-called Langlands class of the representation $\pi_\nu$, a certain conjugacy class in $GL_r(\mathbb{C})$. Then
$$L_\nu(s,\pi_\nu, {\bigwedge}^2 \otimes \chi_\nu)=\det(1-\chi_\nu(\varpi_\nu)q_\nu^{-s}{\bigwedge}^2(A_\nu))^{-1}.$$

Set $$L^S(s,\pi,{\bigwedge}^2\otimes \chi)=\prod_{\nu\notin S}L_\nu(s,\pi_\nu,{\bigwedge}^2\otimes \chi_\nu).$$
This infinite product converges absolutely for $\Re(s)\gg 0$.

We wish to prove the following theorem.

\begin{thm}\label{global} For $S$ a finite set of places including all archimedean places  and all of the ramified nonarchimedean places for $\pi$ and $\chi$, the partial $L$-function $$L^S(s,\pi,{\bigwedge}^2\otimes\chi)$$ extends to a meromorphic function on the whole complex plane. It is entire if $\chi^n\omega_\pi$ is nontrivial on the ideles of absolute value one. Otherwise, if $\chi^n\omega_\pi=1$, then $L^S(s,\pi,{\bigwedge}^2\otimes\chi)$ is holomorphic except for (possible) simple poles at $s=1$ and $s=0$.
\end{thm}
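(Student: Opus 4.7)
The plan is to deduce the theorem from the Euler product identity (\ref{euler}) combined with the analytic properties of the global integral $I(s,\chi,\phi,\Phi)$ recorded in Section \ref{setup} and the local nonvanishing provided by Theorem \ref{main}. At every unramified place $\nu\notin S$, take the spherical Whittaker function $W_\nu^\circ$ and $\Phi_\nu=\mathrm{ch}_{\mathcal{O}_\nu^n}$; the unramified computation of Jacquet and Shalika identifies $J(s,\chi_\nu,W_\nu^\circ,\Phi_\nu)$ with the local $L$-factor $L_\nu(s,\pi_\nu,\wedge^2\otimes\chi_\nu)$. Substituting into (\ref{euler}) gives, for $\Re(s)\gg 0$,
\begin{equation*}
I(s,\chi,\phi,\Phi)=L^S(s,\pi,\wedge^2\otimes\chi)\prod_{\nu\in S}J(s,\chi_\nu,W_\nu,\Phi_\nu),
\end{equation*}
with the local data at the finitely many places $\nu\in S$ still free to be chosen.

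Now fix $s_0\in\mathbb{C}$. Apply Theorem \ref{main} at each $\nu\in S$ to produce $W_\nu,\Phi_\nu$ such that $J(s,\chi_\nu,W_\nu,\Phi_\nu)$ extends meromorphically to all of $\mathbb{C}$ (entire if $\nu$ is nonarchimedean) and is nonvanishing at $s=s_0$ in the sense that its leading Laurent coefficient there is nonzero. Reading the displayed identity as one of meromorphic functions and solving for $L^S$ yields both the meromorphic continuation of the partial $L$-function and the order estimate
\begin{equation*}
\mathrm{ord}_{s=s_0}L^S(s,\pi,\wedge^2\otimes\chi)\geq \mathrm{ord}_{s=s_0}I(s,\chi,\phi,\Phi),
\end{equation*}
since each $-\mathrm{ord}_{s=s_0}J(s,\chi_\nu,W_\nu,\Phi_\nu)\geq 0$ by the nonvanishing of the local Laurent leading coefficient. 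If $\chi^n\omega_\pi$ is nontrivial on the norm-one ideles, $I$ is entire, whence $L^S$ is entire. If instead $\chi^n\omega_\pi=1$, then $I$ has at worst simple poles at $s=0$ and $s=1$ and is holomorphic elsewhere, which yields the stated analytic behavior for $L^S$.

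The main obstacle is precisely the freedom in choosing $s_0\in\mathbb{C}$. In \cite{jasha} the local nonvanishing was available only in a right half-plane of the form $\Re(s)\geq 1-\eta$, where absolute convergence of the local integrals is at one's disposal; the rearrangement above therefore only gave information about $L^S$ on that half-plane. Theorem \ref{main} — resting in the archimedean case on the Dixmier--Malliavin decomposition of Lemma \ref{dixmier1}, the gauge expansion of Lemma \ref{gauge}, the multivariable Mellin continuation of Proposition \ref{expansion}, and the submanifold structure of Lemma \ref{existence}; and in the nonarchimedean case on the explicit Whittaker function built from $\mathrm{ch}_{K_{r-1}}$ — is exactly the ingredient that removes this restriction and makes the global argument valid at every $s_0\in\mathbb{C}$.
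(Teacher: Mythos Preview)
Your proof is correct and follows essentially the same route as the paper: fix the spherical data at $\nu\notin S$, invoke the unramified computation to obtain the identity $I=L^S\cdot\prod_{\nu\in S}J_\nu$, then for each $s_0$ use Theorem \ref{main} to choose local data at $S$ making the finite product meromorphic with nonpositive order at $s_0$, and read off the pole structure of $L^S$ from that of $I$. Your formulation via the order inequality $\mathrm{ord}_{s_0}L^S\geq\mathrm{ord}_{s_0}I$ is in fact slightly more careful than the paper's ``if and only if'' phrasing, since it correctly accounts for the possibility (noted in the remark after the archimedean proof of Theorem \ref{main}) that an archimedean $J_\nu$ may have a genuine pole at $s_0$.
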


Remark: We assumed that the poles of the Eisenstein series (\ref{eis}), if they exist, are actually at $s=0$ and $s=1$ (i.e., $\tau=0$).
If this assumption is not made, then the poles of $L^S(s,\pi,{\bigwedge}^2\otimes\chi)$, if they exist, will be at $s=1-i\frac \tau n$ and $s=-i\frac \tau n$ where $\chi^n\omega_\pi=|\cdot|^{i\tau}$, $\tau$ a real number.

Recall that \cite{jasha} (see Theorem 1, Section 8 of that paper) gives a criterion for when the pole at $s=1$ (and hence at $s=0$) exists in terms of a certain period integral.

\begin{proof}

Recall that if $\nu \notin S$, then $\pi_\nu$ is a spherical representation in the sense that there exists a non-zero $K_\nu$ fixed vector in the space of $\pi_\nu$, $K_\nu$ denoting the usual maximal compact subgroup of $GL_r(F_\nu)$.

Choose $\phi$ from the space of $\pi$ such that the associated Whittaker function $W\in \mathcal{W}(\pi,\psi)$ is of the form $$W=\prod_\nu W_v$$ where, for $\nu\notin S$, $W_\nu$ is invariant under right translation by $K_\nu$ (i.e. $W_\nu$ is the Whittaker function associated to a $K_\nu$ fixed vector in the space of $\pi_\nu$) normalized to take value 1 on $K_\nu$.

Choose $\Phi$ a Schwartz-Bruhat function of the form $$\Phi=\prod_\nu \Phi_\nu$$ such that for $\nu\notin S$, $\Phi_\nu$ is the characteristic function of the lattice of integers in $F_\nu^n$.

Proposition 2, Section 7 of \cite{jasha} states that for $\nu\notin S$, $$J(s,\chi_\nu,W_\nu,\Phi_\nu)=L_\nu(s,\pi_\nu,{\bigwedge}^2\otimes \chi_\nu).$$ 
Thus, (\ref{euler}) becomes \begin{equation}I(s,\chi,\phi,\Phi)=L^S(s,\pi,{\bigwedge}^2\otimes \chi)\prod_{\nu\in S} J(s,\chi_\nu,W_\nu,\Phi_\nu).\label{LT}\end{equation}
for $\Re(s)\gg 0$.

The left hand side of (\ref{LT}) is meromorphic for all choices of $\phi$ and $\Phi$.

Fix $s_0\in \mathbb{C}$. By Theorem \ref{main}, for the finitely many $\nu$ in $S$, we may choose $\Phi_\nu$ and $W_\nu$ so that the local integrals $J(s,\chi_\nu,W_\nu,\Phi_\nu)$ can be meromorphically continued to the whole complex plane and $J(s_0,\chi_\nu,W_\nu,\Phi_\nu)\neq 0$.

In particular, the finite product $\prod_{\nu\in S} J(s,\chi_\nu,W_\nu,\Phi_\nu)$, as a function of $s$, is meromorphic and is not identically 0. Thus, (\ref{LT}) extends $L^S(s,\pi,{\bigwedge}^2\otimes \chi)$ meromorphically to all $s$.

Furthermore, since $$\prod_{\nu\in S}J(s_0,\chi_\nu,W_\nu,\Phi_\nu)\neq 0$$
 for this particular choice of $W$ and $\Phi$, (\ref{LT}) also shows that $L^S(s,\pi,{\bigwedge}^2\otimes \chi)$ has a pole of order $m$ at $s=s_0$ if and only if the global integral $I(s,\chi,\phi,\Phi)$ has a pole of order $m$ at $s=s_0$.

In the previous section, we have seen that $I(s,\chi,\phi,\Phi)$ is entire if $\chi^n\omega_\pi$ is nontrivial on the ideles of absolute value one, and has (possible) simple poles at $s=1$ and $s=0$ if $\chi^n\omega_\pi=1$.

As this can be done for all $s_0\in\mathbb{C}$, this proves our result.

\end{proof}

Remark: We remind the reader that the above result does not state that if $\chi^n\omega_\pi=1$, then $L^{S}(s,\pi,{\bigwedge}^2\otimes \chi)$ must necessarily have a pole at $s=1$ and $s=0$. Theorem 1 of section 8 of \cite{jasha} gives a more explicit criterion for the existence of a pole at $s=1$ if $\chi^n\omega_\pi=1$. Namely, if the integral 
$$\int_{GL_n(F)\backslash GL_n(\mathbb{A}_F)/Z(\mathbb{A}_F)} \int_{M_n(F)\backslash M_n(\mathbb{A}_F)}
\phi\left[\left(\begin{array}{cc} 1_n&X\\0&1_n\end{array}\right)\left(\begin{array}{cc} g&0\\0&g\end{array}\right)\right]
\psi(\textrm{Tr} X)dX\, \chi(\det g)dg$$
is non-zero for some $K_r$-finite vector $\phi$ in the space of $\pi$, then $L^{S}(s,\pi,{\bigwedge}^2\otimes\chi)$ will have a pole at $s=1$.

\section{The Odd Case}

\subsection{Analogous Results for the Odd Case}

We note here that the techniques used above also apply to the case of $GL_r$ where $r$ is odd. In some sense this case is easier, since the integrals involved do not depend, at least initially, on a Schwartz-Bruhat function as one of the parameters.

We outline those results already established in section 9 of \cite{jasha}, and discuss the analogues of our main results in this case. Assume thoughout this section that $r=2n+1$ is odd.

We begin with the global situation. Let $F$ be a number field. Again, $\pi$ is a unitary cuspidal representation of $GL_r(\mathbb{A}_F)$, and $\chi$ is a unitary gr\"ossencharacter. For $\phi$ in the space of $\pi$, we define the global integral 
\begin{multline}I(s,\chi,\phi)=
\int_{GL_n(F)\backslash GL_n(\mathbb{A}_F)} \int_{M_n(F)\backslash M_n(\mathbb{A}_F)}\int_{F^n\backslash \mathbb{A}_F^n} \phi\left[\left(\begin{array}{ccc}1_n&Z&Y\\0&1_n&0\\0&0&1\end{array}\right)\left(\begin{array}{ccc}g&0&0\\0&g&0\\0&0&1\end{array}\right)\right]\\
\times \psi(\textrm{Tr}(Z))dZ\,dY\,\chi(\det g){|\det g |}^{s-1} dg.\nonumber \end{multline}

Proposition 1, Section 9 of \cite{jasha} states that this integral converges absolutely in the sense that the integral 
$$\int \left|\int \int \phi\left[\left(\begin{array}{ccc}1_n&Z&Y\\0&1_n&0\\0&0&1\end{array}\right)\left(\begin{array}{ccc}g&0&0\\0&g&0\\0&0&1\end{array}\right)\right]
 \psi(\textrm{Tr}(Z))dZ\,dY\right|{|\det g |}^{\Re(s)-1} dg$$
conerges for all $s$.
Thus $I(s,\chi,\phi)$ defines a function of $s$ which is holomorphic for all $s$.

Again, $\theta_r$ is a character of $N_r$ defined by the an additive character $\psi$, and we let $W\in\mathcal{W}(\pi,\psi)$ be the Whittaker function attached to $\phi$. 

Here, we let $\sigma$ be the permutation matrix which takes the sequence
$$(1,2,3,\ldots, n, n+1, n+2, n+3,\ldots, 2n, 1)$$ to the sequence $$(1,3,5,\ldots, 2n-1, 2,4,6,\ldots, 2n, 1).$$

Define the Eulerian integral
 \begin{multline}J(s,\chi,W)=
\int_{N_n(\mathbb{A}_F)\backslash GL_n(\mathbb{A}_F)} \int_{\mathfrak{p}_{0,n}(\mathbb{A}_F)\backslash M_n(\mathbb{A}_F)} W\left[\sigma\left(\begin{array}{ccc}1_n&Z&0\\0&1_n&0\\0&0&1\end{array}\right)\left(\begin{array}{ccc}g&0&0\\0&g&0\\0&0&1\end{array}\right)\right]\\
\times \psi(-\textrm{Tr}(Z))dZ\,\chi(\det g){|\det g |}^{s-1} dg\nonumber \end{multline} with $\mathfrak{p}_{0,n}$ the space of upper triangular matrices in $M_n$.

Proposition 2, Section 9 of \cite{jasha} states that $J(s,\chi,W)$ converges absolutely for $\Re(s)\gg 0$ and gives the equality $$I(s,\chi,\phi)=J(s,\chi,W)$$ for $\Re(s)\gg 0$.

Now assume that $W=\prod_\nu W_\nu$.

As before, $J(s,\chi,W)$ is a product of local integrals, 
$$J(s,\chi,W)=\prod_\nu J(s,\chi_\nu,W_\nu)$$
where \begin{multline}J(s,\chi_\nu,W_\nu)=
\int_{N_n(F_\nu)\backslash GL_n(F_{\nu})} \int_{\mathfrak{p}_{0,n}(F_\nu)\backslash M_n(F_\nu)} W_\nu\left[\sigma\left(\begin{array}{ccc}1_n&Z&0\\0&1_n&0\\0&0&1\end{array}\right)\left(\begin{array}{ccc}g&0&0\\0&g&0\\0&0&1\end{array}\right)\right]\\
\times \psi_\nu(-\textrm{Tr}(Z))dZ\,\chi_\nu(\det g){|\det g |}^{s-1} dg.\label{jodd}\end{multline}

Proposition 3, Section 9 of \cite{jasha} states that for each place $\nu$, there exists an $\eta>0$ for which the local integral $J(s,\chi_\nu,W_\nu)$ converges absolutely for $\Re(s)>1-\eta$.

Finally, by Proposition 4, Section 9 of \cite{jasha}, at the unramified nonarchimedean places $\nu$ for $\pi$ and $\chi$, if we take $W_\nu$ to be the spherical element, that is, $W_\nu$ to be invariant on the right by the maximal subgroup $GL_r(\mathcal{O}_\nu)$, we have $$J(s,\chi_\nu,W_\nu)=L(s,\pi_\nu, {\bigwedge}^2\rho\otimes \chi_\nu).$$

Let $S$ be a finite set of places containing all the archimedean ones and those which are ramified for $\pi$ or $\chi$. 
If $$L^S(s,\pi,{\bigwedge}^2\rho\otimes\chi)=\prod_{\nu\notin S}L(s,\pi_\nu,{\bigwedge}^2\rho\otimes\chi_\nu),$$
then we have \begin{equation}I(s,\chi\phi)=L^S(s,\pi,{\bigwedge}^2\rho\otimes\chi)\prod_{n\in S} J(s,\chi_\nu,W_\nu).\label{lsodd}\end{equation}

We have the analogue of Theorem \ref{main}.

\begin{thm}\label{mainodd}Let $F$ be any local field. There exists $W_\nu$ such that $J(s,\chi_\nu,W_\nu)$ defines a holomorphic function for all $s$, and $$J(s_0,\chi_\nu,W_\nu)\neq 0.$$
\end{thm}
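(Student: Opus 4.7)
The plan is to mirror the strategy of Lemma \ref{J1} and Theorem \ref{main} from the archimedean case in Section 3, exploiting the structural simplification that the integral (\ref{jodd}) does not involve an outer Schwartz-Bruhat function. The key observation is that in the odd case $\sigma$ has block form $\bigl(\begin{smallmatrix}\sigma'&0\\0&1\end{smallmatrix}\bigr)$ with $\sigma'\in GL_{2n}(F)$ the even-case permutation; hence a direct matrix computation shows the argument of $W_\nu$ in (\ref{jodd}) is always of the form $\bigl(\begin{smallmatrix}A&0\\0&1\end{smallmatrix}\bigr)$ with
$$A = \sigma'\begin{pmatrix}g & Zg\\ 0 & g\end{pmatrix} \in GL_{r-1}(F).$$
By Proposition \ref{smoothvector}, we may therefore choose $W_\nu = W_{\phi_0}$ for \emph{any} $\phi_0 \in C_c^\infty(\theta_{r-1}, GL_{r-1})$, and (\ref{jodd}) collapses to
$$J(s,\chi_\nu, W_{\phi_0}) = \int_{N_n\backslash GL_n}\int_{\mathfrak{p}_{0,n}\backslash M_n} \phi_0\!\left[\sigma'\begin{pmatrix}g & Zg\\ 0 & g\end{pmatrix}\right]\psi(-\textrm{Tr}(Z))\,dZ\,\chi(\det g)|\det g|^{s-1}\,dg.$$

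Next I would prove an odd-case analogue of Lemma \ref{existence}. Let $L\subset GL_{r-1}$ be the closed subgroup of matrices $\bigl(\begin{smallmatrix}g & X\\ 0 & g\end{smallmatrix}\bigr)$ with $g\in GL_n$ and $X\in M_n$, and set $H = \sigma' L {\sigma'}^{-1}$. After the change of variable $X = Zg$, the double integral above is recognized as an integration over $(N_{r-1}\cap H)\backslash H$, because a direct check shows $N_{r-1}\cap H$ corresponds under $\sigma'$-conjugation to matrices $\bigl(\begin{smallmatrix}u & T\\ 0 & u\end{smallmatrix}\bigr)$ with $u\in N_n$ and $T\in \mathfrak{p}_{0,n}$. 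A Lie-algebra computation parallel to that of Lemma \ref{existence}---verifying that the relevant tangent space embeds as lower-triangular matrices inside $T_e(N_{r-1}\backslash GL_{r-1})$---shows in the archimedean case that $(N_{r-1}\cap H)\backslash H$ is a closed regular submanifold of $N_{r-1}\backslash GL_{r-1}$. Consequently, any $\phi_0 \in C_c^\infty(\theta_{r-1}, GL_{r-1})$ restricts to a function of compact support modulo $N_{r-1}\cap H$ on $H$.

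With this compact support in place, the $(g,Z)$-integration is concentrated on a compact set modulo the kernel of invariance; on any such set $|\det g|^{s-1}$ is a bounded function of $g$ for each fixed $s$, so $J(s,\chi_\nu, W_{\phi_0})$ converges absolutely for every $s\in\mathbb{C}$ and defines an entire function. For non-vanishing at $s_0$, mimic the end of the proof of Lemma \ref{J1}: define the smooth function
$$A(h) = \Theta_{r-1}(h)\,\psi(-\textrm{Tr}(Z'))\,\chi(\det g)|\det g|^{s_0-1}, \qquad h = \sigma'\begin{pmatrix}g & Z'\\0 & g\end{pmatrix}{\sigma'}^{-1},$$
on $(N_{r-1}\cap H)\backslash H$, note $A(1_{r-1}) = 1$, and choose $f\in C_c^\infty(N_{r-1}\backslash GL_{r-1})$ supported in a sufficiently small neighborhood of the identity so that $\int f \cdot A$ is nonzero. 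Then taking $\phi_0 = f\,\Theta_{r-1}$ (after the obvious $\sigma'$-shift) produces the desired $W_{\phi_0}$ with $J(s_0,\chi_\nu, W_{\phi_0}) \neq 0$. For $F$ nonarchimedean, one bypasses the manifold discussion by taking $\phi_0$ to be the explicit characteristic-function-type object used in the nonarchimedean proof of Lemma \ref{J1}; in that case $J(s,\chi_\nu, W_{\phi_0})$ turns out to be a nonzero constant as a function of $s$.

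The main obstacle I anticipate is the verification of the odd-case analogue of Lemma \ref{existence}---specifically, computing $N_{r-1}\cap H$ after conjugation by $\sigma'$ and confirming the closed regular submanifold property. Structurally, however, the odd case is simpler than the even (there is no ``middle row and column'' corresponding to the $n$-th block), so the even-case Lie algebra argument should transplant with essentially no modification.
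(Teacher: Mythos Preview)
Your proposal is correct and follows essentially the same approach as the paper. The paper's own proof is a two-sentence sketch: it points out that the argument of $W_\nu$ in (\ref{jodd}) lies in $GL_{r-1}$ (embedded in $GL_r$), invokes Proposition \ref{smoothvector}, and says the rest goes through exactly as in Lemma \ref{J1}. You have correctly unpacked what that entails---the block decomposition of $\sigma$, the reduction to an integral of $\phi_0$ over $(N_{r-1}\cap H)\backslash H$ for the appropriate subgroup $H$, the odd-case analogue of Lemma \ref{existence}, and the separate handling of the archimedean and nonarchimedean cases---and your remark that the odd case is structurally simpler (no middle row and column) is exactly why the paper is content to leave the details implicit.
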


\begin{proof}Note the similarity between the function $J(s,\chi_\nu,W_\nu)$ and the function $J_1(s,1_n,\chi_\nu,W_\nu)$ defined in (\ref{j1}). The matrix $\sigma\left(\begin{array}{ccc}1_n&Z&0\\0&1_n&0\\0&0&1\end{array}\right)\left(\begin{array}{ccc}g&0&0\\0&g&0\\0&0&1\end{array}\right)$ is in $GL_{r-1}$, as a subgroup of $GL_r$ embedded in the usual way. Thus we may immediately apply Proposition \ref{smoothvector}, which holds for $r$ odd, to the integrand in (\ref{jodd}). The proof then follows almost exactly as that of Lemma \ref{J1}.

\end{proof}

Thus, by Theorem \ref{mainodd}, we can proceed as we did in the even case. For an appropriate choice of $W=\prod_\nu W_\nu$, (\ref{lsodd}) defines $L^S(s,\pi,{\bigwedge}^2\rho\otimes\chi)$ as a meromorphic function of $s$, which, in fact, is entire. Indeed, for each $s_0\in \mathbb{C}$, $W$ can be chosen so that $\prod_{n\in S} J(s_0,\chi_\nu,W_\nu)\neq 0$. $I(s,\chi,W)$ is holomophic at $s=s_0$ (for all choices of $W$), thus $L^S(s,\pi,{\bigwedge}^2\rho\otimes\chi)$ cannot have a pole at $s=s_0$.

Therefore we have:

\begin{thm}\label{mainglobalodd} Let $S$ be a finite set of places containing all the archimedean places and all those which ramify for $\pi$ or $\chi$. Then the function $L^S(s,\pi,{\bigwedge}^2\rho\otimes\chi)$ extends holomorphically to all of $\mathbb{C}$.
\end{thm}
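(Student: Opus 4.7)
The plan is to exploit the factorization
\[
I(s,\chi,\phi) = L^S(s,\pi,{\bigwedge}^2\rho\otimes\chi)\prod_{\nu\in S} J(s,\chi_\nu,W_\nu)
\]
from (\ref{lsodd}), which holds in the half plane of absolute convergence and extends as an identity of meromorphic functions once each side is continued. The left-hand side is entire (by Proposition 1, Section 9 of \cite{jasha}) for every choice of cusp form $\phi$, and the right-hand side is the partial $L$-function multiplied by finitely many local integrals. So to show $L^S$ has no pole at a given $s_0 \in \mathbb{C}$, it suffices to exhibit a single choice of local data at the places of $S$ making each local factor holomorphic and nonzero at $s_0$; the holomorphy of $L^S$ at $s_0$ then follows from that of $I(s,\chi,\phi)$.

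Concretely, I would fix $s_0\in\mathbb{C}$ and, for each place $\nu\in S$, invoke Theorem \ref{mainodd} to produce a Whittaker function $W_\nu$ such that $J(s,\chi_\nu,W_\nu)$ is holomorphic in $s$ and $J(s_0,\chi_\nu,W_\nu)\neq 0$. At each unramified place $\nu\notin S$, I would take $W_\nu$ to be the normalized spherical Whittaker function, so that Proposition 4, Section 9 of \cite{jasha} identifies $J(s,\chi_\nu,W_\nu)$ with the local Euler factor $L(s,\pi_\nu,{\bigwedge}^2\rho\otimes\chi_\nu)$. I would then choose $\phi$ in the space of $\pi$ whose associated global Whittaker function is the pure tensor $W=\prod_\nu W_\nu$ (or, by linearity, a finite sum of such pure tensors, which is harmless).

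With this data in place, rearrange the identity (\ref{lsodd}) to read
\[
L^S(s,\pi,{\bigwedge}^2\rho\otimes\chi) = \frac{I(s,\chi,\phi)}{\prod_{\nu\in S} J(s,\chi_\nu,W_\nu)}.
\]
Near $s_0$ the denominator is holomorphic and nonvanishing by the choices above, while the numerator is entire in $s$. Hence $L^S$ extends holomorphically across $s_0$. Since $s_0$ was arbitrary, $L^S(s,\pi,{\bigwedge}^2\rho\otimes\chi)$ is entire. Note that the choice of $\phi$ depends on $s_0$, but this is no obstruction because the entirety of $I(s,\chi,\phi)$ is independent of $\phi$, so one only needs the identity at the single point $s_0$ for the chosen $\phi$.

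The real work of the theorem is already encapsulated in Theorem \ref{mainodd}; the present step is essentially a standard unramified-versus-ramified decomposition, with no Eisenstein series in play (the odd case has no rank-one Eisenstein series contributing to the global integral, which is precisely why one gets entirety rather than meromorphy with two possible poles). The main technical obstacle, therefore, is not at the global step but is inherited: establishing Theorem \ref{mainodd}, which in turn proceeds by essentially the same descending-induction mechanism used to prove Lemma \ref{J1}, applied through Proposition \ref{smoothvector} (which holds equally well for $r$ odd).
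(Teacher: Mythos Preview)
Your proposal is correct and follows essentially the same approach as the paper: fix $s_0$, invoke Theorem \ref{mainodd} at each $\nu\in S$ to obtain local Whittaker data making $\prod_{\nu\in S}J(s,\chi_\nu,W_\nu)$ holomorphic and nonzero at $s_0$, use spherical vectors elsewhere, and then read off the holomorphy of $L^S$ at $s_0$ from the entirety of $I(s,\chi,\phi)$ via (\ref{lsodd}). Your additional remark that the absence of an Eisenstein series in the odd case is what yields entirety (rather than the two possible poles in the even case) is exactly the point.
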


Compare this result to \cite[Theoreom 3.5]{kimcan}, regarding the full (or completed) $L$-function.

\bibliography{nonvanishing} 
\bibliographystyle{plain}

\newpage

\appendix

 \section{Taylor expansions and Wirtinger Derivatives}

Given a smooth function $f:\mathbb{C}\rightarrow \mathbb{C}$, thought of as a smooth function from $\mathbb{R}^2$ into $\mathbb{C}$, taking $z=x+iy$, we abuse the notation and write $f(x,y)=f(z)$. We have the Wirtinger derivatives, formally defined as $$\frac{\partial}{\partial z}=\frac 12\frac {\partial}{\partial x}-\frac i2\frac {\partial}{\partial y}$$ and $$\frac {\partial}{\partial\overline{z}}=\frac 12 \frac {\partial}{\partial x}+\frac i2 \frac {\partial}{\partial y}.$$

They satisfy all the expected properties of differential operators. In particular, they satisfy a product rule and a chain rule. See \cite [p. 4] {hormander} for a brief summary of these properties.

Of particular interest to our purposes, is the fact that they commute with one another, in the sense $$\frac {\partial^2}{\partial z\partial \overline{z}}=\frac {\partial^2}{\partial \overline{z}\partial z},$$ as can be readily verified by direct computation.

We wish to prove the following analog of the Taylor Expansion Theorem and the Integral Form of the Remainder.

\begin{prop}\label{ztaylor}
Given a smooth function $f:\mathbb{C}\rightarrow \mathbb{C}$, and a positive integer $k$, the following holds for all $z\in \mathbb{C}$.
\begin{IEEEeqnarray}{rCl}f(z)&=&\sum_{l=0}^k\sum_{j=0}^l \frac {1}{j!(l-j)!}z^j\overline{z}^{l-j} \frac {\partial^k f}{\partial z^j \partial \overline{z}^{k-j}}(0)\nonumber \\
&&+\sum_{j=0}^{k+1}\frac {k+1}{j!(k+1-j)!}z^j\overline{z}^{k+1-j}\int_0^1(1-t)^k \frac {\partial ^{k+1}f}{\partial z^j\partial \overline{z}^{k+1-j}}(tz)dt.\nonumber \end{IEEEeqnarray}
\end{prop}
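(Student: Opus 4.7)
The plan is to reduce the statement to the one-variable Taylor theorem with integral remainder by restricting $f$ to the ray $t \mapsto tz$. Concretely, define $g : [0,1] \to \mathbb{C}$ by $g(t) = f(tz)$, so that $g(1) = f(z)$ and $g(0) = f(0)$. Since $f$ is smooth as a function of the two real variables $(x,y)$, $g$ is a smooth function of the real variable $t$, and the classical Taylor expansion with integral remainder gives
\begin{equation*}
g(1) = \sum_{l=0}^{k} \frac{g^{(l)}(0)}{l!} + \frac{1}{k!}\int_{0}^{1}(1-t)^{k} g^{(k+1)}(t)\, dt.
\end{equation*}
All that remains is to express $g^{(l)}(t)$ in terms of the Wirtinger derivatives of $f$.

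First I would establish, by induction on $l$, the identity
\begin{equation*}
g^{(l)}(t) = \sum_{j=0}^{l}\binom{l}{j} z^{j}\overline{z}^{\,l-j} \frac{\partial^{l} f}{\partial z^{j}\,\partial \overline{z}^{\,l-j}}(tz).
\end{equation*}
The base case $l=1$ is the chain rule for the Wirtinger operators applied to $t \mapsto f(tz)$: since $\partial(tz)/\partial t = z$ and $\partial(\overline{tz})/\partial t = \overline{z}$, one gets $g'(t) = z\, \partial_{z} f(tz) + \overline{z}\, \partial_{\overline{z}} f(tz)$. For the inductive step, differentiating the formula above in $t$ and reusing the base case produces two sums; the commutativity $\partial_{z}\partial_{\overline{z}} = \partial_{\overline{z}}\partial_{z}$ (noted in the excerpt) lets us combine like mixed partials, and Pascal's rule $\binom{l}{j-1} + \binom{l}{j} = \binom{l+1}{j}$ collapses the two sums into the single binomial sum at level $l+1$.

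With this formula in hand I would substitute it into the one-variable Taylor expansion. For the polynomial part, $\binom{l}{j}/l! = 1/(j!\,(l-j)!)$, which reproduces the first sum in the statement exactly. For the remainder, $\binom{k+1}{j}/k! = (k+1)/(j!(k+1-j)!)$, and after interchanging the finite sum with the integral one recovers the second sum of the statement. The result is valid for every $z \in \mathbb{C}$ because the one-variable Taylor theorem applies on $[0,1]$ regardless of the direction $z$.

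The only real obstacle is the inductive verification of the formula for $g^{(l)}(t)$ and the careful bookkeeping of binomial coefficients; everything else is a mechanical translation of the real-variable statement. (As a minor side remark, the exponent $k$ on $\partial^{k} f$ in the first sum of the proposition as stated looks like it should be $l$ to match the summation index; the derivation above naturally produces $\partial^{l} f$ there, and I would note this when presenting the final formula.)
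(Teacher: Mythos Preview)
Your proposal is correct and follows essentially the same route as the paper: both reduce to the one-variable Taylor formula with integral remainder for $g(t)=f(tz)$ and then identify $g^{(l)}(t)$ with the binomial expansion in Wirtinger derivatives. The only organizational difference is that the paper first writes the two-variable real Taylor expansion in $\partial_x,\partial_y$ and then invokes the operator identity $(a\partial_x+b\partial_y)^l=(\tau\partial_z+\overline{\tau}\partial_{\overline{z}})^l$ (proved via the binomial theorem for commuting operators) to convert, whereas you compute $g^{(l)}$ directly in $\partial_z,\partial_{\overline{z}}$ by induction and Pascal's rule; these are equivalent, and your version is slightly more direct. Your observation about the typo ($\partial^k f$ should read $\partial^l f$ in the inner summand) is also correct.
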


First, we need two lemmas:

\begin{lem}
If $D_1$ and $D_2$ are two operators wich commute, the binomial theorem $$(D_1+D_2)^l=\sum_{j=0}^l \left(\begin{array}{c} l\\ j\end{array}\right)D_1^j D_2^{l-j}$$ holds.
\end{lem}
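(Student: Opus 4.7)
The plan is a straightforward induction on $l$, following the familiar proof of the binomial theorem for scalars but with one extra place where commutativity is invoked. For the base case $l=0$ both sides equal the identity operator (with the convention $D^0=\mathrm{id}$), and $l=1$ gives $D_1+D_2$ on both sides.

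For the inductive step, assuming the identity holds for $l$, I would write
$$(D_1+D_2)^{l+1}=(D_1+D_2)(D_1+D_2)^l=(D_1+D_2)\sum_{j=0}^l \binom{l}{j}D_1^j D_2^{l-j}.$$
Then I would distribute, splitting the sum into a piece coming from $D_1$ (which simply shifts the exponent of $D_1$ up by one) and a piece coming from $D_2$. In the latter piece one encounters the product $D_2\cdot D_1^j D_2^{l-j}$, and here is the only place commutativity is used: because $D_1 D_2 = D_2 D_1$, an easy secondary induction on $j$ shows $D_2 D_1^j = D_1^j D_2$, so this product equals $D_1^j D_2^{l-j+1}$. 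After this rearrangement the two pieces can be combined by re-indexing one of the sums.

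The remaining step is purely combinatorial: collecting the coefficient of $D_1^j D_2^{l+1-j}$ gives $\binom{l}{j-1}+\binom{l}{j}$, which by Pascal's rule equals $\binom{l+1}{j}$, yielding the desired expression for $(D_1+D_2)^{l+1}$. The endpoint terms $j=0$ and $j=l+1$ are handled by the usual convention $\binom{l}{-1}=\binom{l}{l+1}=0$.

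There is no real obstacle here; the only subtlety worth flagging explicitly is the preliminary observation $D_2 D_1^j=D_1^j D_2$ for all $j\geq 0$, which is what turns the hypothesis "$D_1$ and $D_2$ commute" into the stronger statement "any power of $D_1$ commutes with any power of $D_2$" needed to make the distribution and re-collection step go through.
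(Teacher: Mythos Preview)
Your proof is correct and is the standard induction argument; the paper simply declares the result ``straightforward'' without spelling out any details, so your proposal is a fully fleshed-out version of the same approach.
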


\begin{proof} This is straightforward.
\end{proof}

From this we obtain:

\begin{lem}\label{taylorequation} Given $f:\mathbb{C}\rightarrow \mathbb{C}$ and setting $z=x+iy$, then the following holds for all positive integers $l$ and all $\tau=a+ib\in \mathbb{C}$.
$$\sum_{j=0}^l \frac{1}{j!(l-j)!}a^j b^{l-j}\frac{\partial^l f}{\partial x^j\partial y^{l-j}}(z)=\sum_{j=0}^l \frac{1}{j!(l-j)!}\tau^j \overline{\tau}^{l-j}\frac{\partial^l f}{\partial x^j\partial \overline{z}^{l-j}}(z).$$
\end{lem}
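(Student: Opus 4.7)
The plan is to reduce Lemma \ref{taylorequation} to a single computation followed by the binomial theorem already supplied by the preceding lemma. The key observation is that both sides of the claimed identity should, after multiplication by $l!$, equal $D^l f(z)$ for a single first-order linear differential operator $D$; in fact the identity is literally the binomial expansion of $D^l$ in two different bases of first-order operators.

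First I would check the operator identity
\[
a\,\frac{\partial}{\partial x} + b\,\frac{\partial}{\partial y}
\;=\;
\tau\,\frac{\partial}{\partial z} + \overline{\tau}\,\frac{\partial}{\partial \overline{z}}.
\]
This is a direct substitution using $\partial/\partial z = \tfrac{1}{2}(\partial/\partial x - i\,\partial/\partial y)$ and $\partial/\partial \overline{z} = \tfrac{1}{2}(\partial/\partial x + i\,\partial/\partial y)$, together with $\tau = a+ib$, $\overline{\tau} = a-ib$: the cross terms cancel and the real terms combine to give $a\,\partial/\partial x + b\,\partial/\partial y$.

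Next I would note that the operators $\partial/\partial x$ and $\partial/\partial y$ commute on $C^\infty(\mathbb{C})$ by Clairaut's theorem, and hence so do the Wirtinger operators $\partial/\partial z$ and $\partial/\partial \overline{z}$, being constant-coefficient linear combinations of the former (this is exactly the commutation noted in the paragraph just before Proposition \ref{ztaylor}). Thus the binomial theorem furnished by the previous lemma applies to both pairs of commuting operators. Raising the common operator $D = a\,\partial_x + b\,\partial_y = \tau\,\partial_z + \overline{\tau}\,\partial_{\overline{z}}$ to the $l$-th power and expanding yields
\[
D^l \;=\; \sum_{j=0}^{l} \binom{l}{j} a^{j} b^{l-j}\,\frac{\partial^{l}}{\partial x^{j}\,\partial y^{l-j}}
\;=\; \sum_{j=0}^{l} \binom{l}{j} \tau^{j}\,\overline{\tau}^{\,l-j}\,\frac{\partial^{l}}{\partial z^{j}\,\partial \overline{z}^{\,l-j}}.
\]
Applying both sides to $f$, evaluating at $z$, and dividing through by $l!$ — using $\binom{l}{j}/l! = 1/(j!(l-j)!)$ — gives precisely the identity of the lemma.

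There is no real obstacle in this argument; the only point that merits care is verifying that one may legitimately invoke the binomial theorem for the Wirtinger operators, which requires their commutation, and this reduces to Clairaut on the underlying real partials. Everything else is direct algebraic manipulation.
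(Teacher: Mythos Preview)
Your proposal is correct and follows essentially the same route as the paper's own proof: verify the first-order operator identity $a\,\partial_x + b\,\partial_y = \tau\,\partial_z + \overline{\tau}\,\partial_{\overline{z}}$, raise to the $l$-th power, and expand both sides via the binomial theorem for commuting operators (the preceding lemma), then apply to $f$ and divide by $l!$. The only cosmetic difference is that you explicitly invoke Clairaut to justify commutation, whereas the paper simply asserts it.
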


\begin{proof}
By direct computation $$a\frac{\partial }{\partial x}+b\frac{\partial}{\partial y}=\tau\frac{\partial }{\partial z}+\overline{\tau}\frac{\partial}{\partial \overline{z}}.$$

Thus as differential operators $${\left(a\frac{\partial }{\partial x}+b\frac{\partial}{\partial y}\right)}^l={\left(\tau\frac{\partial }{\partial z}+\overline{\tau}\frac{\partial}{\partial \overline{z}}\right)}^l.$$

Now, we note that the operators
$a\frac{\partial }{\partial x}$ and $b\frac{\partial}{\partial y}$ commute with each other, as do $\tau\frac{\partial }{\partial z}$ and $\overline{\tau}\frac{\partial}{\partial \overline{z}}$, therefore, the claim follows after applying the previous lemma to both sides, dividing both sides by $k!$,  and applying the resulting operators to $f$.
\end{proof}

\begin{proof}[Proof of Poposition \ref{ztaylor}]
We begin the proof with the Taylor expansion of $f$ as a function of two variable $x$ and $y$ with the integral form of the remainder. This can be seen from the Taylor Expansion and Integral form of the remainder of the function $g(t)=f(tx,ty)$ (See, for example of \cite[Theorem 8.14]{fitz}) and the Chain Rule.
\begin{IEEEeqnarray}{rCl}f(z)&=&\sum_{l=0}^k \sum_{j=0}^l\frac{1}{j!(l-j)!}x^j y^{l-j}\frac{\partial^l f}{\partial x^j\partial y^{l-j}}(0)\nonumber \\
&&+\sum_{j=0}^{k+1} \frac {k+1}{j!(k+1-j)!}x^j y^{k+1-j}\int_0^1 (1-t)^k \frac {\partial^{k+1} f}{\partial x^j \partial y^{k+1-j}}(tz)dt\nonumber \end{IEEEeqnarray}

By Lemma \ref{taylorequation}, evaluating at $z=0$ and then taking $\tau=z=x+iy$, each term in the first sum over $k$ becomes
$$\sum_{j=0}^l \frac{1}{j!(l-j)!}z^j \overline{z}^{l-j}\frac{\partial^l f}{\partial z^j\partial \overline{z}^{l-j}}(0)$$

Likewise, after switching the order of summation and integration in the remaining sum, and applying Lemma \ref{taylorequation} (evaluating at $tz$), we get the remainder as
$$\sum_{j=0}^{k+1} \frac {k+1}{j!(k+1-j)!}z^j \overline{z}^{k+1-j}\int_0^1 (1-t)^k \frac {\partial^{k+1} f}{\partial z^j \partial \overline{z}^{k+1-j}}(tz)dt.$$

The desired result now follows.
\end{proof}

We end this Appendix with one final note. If $f$ is a function of several complex variables, then the Wirtinger derivatives with respect to the various variables are well-defined and commute with one another in the same way as the usual partial derivatives do \cite[p. 4]{hormander}.

\end{document}